\newcommand{\R}{\mathbb{R}}
\newcommand{\DD}{\mathcal{D}}
\newcommand{\N}{\mathbb{N}}
\newcommand{\C}{\mathbb{C}}
\newcommand{\LL}{\mathcal{L}}
\renewcommand{\H}{\mathcal{H}}
\newcommand{\CS}{\mathcal{S}}
\newcommand{\CR}{\mathcal{R}}
\newcommand{\CO}{\mathcal{O}}
\newcommand{\E}{{\mathcal E}} 
\newcommand{\F}{{\mathcal F}}
\newcommand{\po}{\partial}
\newcommand{\wto}{\rightharpoonup} 
\newcommand{\ve}{\varepsilon}
\newcommand{\la}{\langle}
\newcommand{\ra}{\rangle}
\newcommand{\vphi}{\varphi}
\newcommand{\loc}{{\text{\rm loc}}}
\newcommand{\X}{\times}
\renewcommand{\d}{\delta}
\renewcommand{\l}{\lambda}
\renewcommand{\a}{\alpha}
\renewcommand{\b}{\beta}
\newcommand{\s}{\sigma}
\newcommand{\g}{\gamma} 
\newcommand{\z}{\zeta}
\renewcommand{\k}{\kappa}
\newcommand{\sgn}{\text{\rm sgn}}
\newcommand{\Om}{\Omega}
\newcommand{\om}{\omega}
\newcommand{\supp}{\text{\rm supp}\,}
\newcommand{\M}{{\mathcal M}}
\renewcommand{\E}{{\mathcal E}}
\newcommand{\DM}{\mathcal D\mathcal M}
\renewcommand{\div}{\text{\rm div}\,}
\renewcommand{\supp}{\text{\rm supp}\,}
\newcommand{\abf}{{\mathbf a}}
\newcommand{\ul}{\underline}
\newcommand{\fF}{{\frak F}}
\newcommand{\cL}{{\mathcal L}}
\newcommand{\bff}{{\mathbf f}}
\newcommand{\bfa}{{\mathbf a}}
\newcommand{\esslim}{\operatorname{ess}\!\lim}
\newcommand{\BV}{\operatorname{BV}}
\newcommand{\Bbf}{\mathbf{B}}
\newcommand{\Lip}{\text{\rm Lip}}
\renewcommand{\S}{{\mathcal S}}
\newcommand{\B}{{\mathcal B}}
\newcommand{\K}{\mathcal{K}}
\newcommand{\dist}{\text{dist\,}}
\renewcommand{\Lip}{\text{Lip\,}}
\newcommand{\un}[1]{\underline{#1}}
\theoremstyle{plain}
\newtheorem{theorem}{Theorem}[section]
\newtheorem{lemma}{Lemma}[section]
\theoremstyle{definition}
\newtheorem{definition}{Definition}[section]
\theoremstyle{remark}
\newtheorem{remark}{Remark}[section]
\numberwithin{equation}{section}
\begin{document}

\title[Boundary Value Problem for Degenerate Parabolic Equations]
{A Boundary Value Problem for a Class of Anisotropic Degenerate Parabolic-Hyperbolic Equations }
\author{Hermano Frid} 

\address{Instituto de Matem\'atica Pura e Aplicada - IMPA\\
         Estrada Dona Castorina, 110 \\
         Rio de Janeiro, RJ 22460-320, Brazil}
\email{hermano@impa.br}

\author{Yachun Li}

\address{School of Mathematical Sciences, MOE-LSC, and SHL-MAC,  Shanghai Jiao Tong University\\ Shanghai 200240, P.R.~China}

\email{ycli@sjtu.edu.cn}

\keywords{divergence-measure fields, normal traces, Gauss-Green theorem,
          product rule} 
\subjclass{Primary: 26B20,28C05, 35L65, 35B35; Secondary: 26B35, 26B12
            35L67}
\date{}
\thanks{}

\begin{abstract} 

We consider a mixed type boundary value problem for a class of degenerate parabolic-hyperbolic equations. Namely, we consider a Cartesian product domain and split its boundary into two parts. In one of them we impose a Dirichlet boundary condition; in the other, we impose a Neumann condition.  We apply a normal trace formula for $L^2$-divergence-measure fields to  prove a new strong trace property in the part of the boundary where the Neumann condition is imposed.  We prove existence and uniqueness of the entropy solution.   This is a revised corrected version of the paper published in  Arch.\ Ration.\ Mech.\ Anal.\ {\bf 226} (2017), no. 3, 975--1008.

\end{abstract}

\maketitle

\section{Introduction}\label{S:1}

We are concerned with the following mixed type boundary value problem for a class of anisotropic degenerate parabolic-hyperbolic equation. Let $\Om\subset\R^d$ be a bounded domain  of the form $\Om=\Om'\X\Om''$, with $\Om'\subset\R^{d'}$, $\Om''\subset\R^{d''}$ , $d'+d''=d$, bounded open sets with smooth boundaries. For $x\in\R^d$, let us denote $x=( x', x'')$, with $x'=(x_1,\cdots,x_{d'}) \in\R^{d'}$ and $x''=(x_{d'+1},\cdots,x_d)\in\R^{d''}$. We consider the following initial-boundary value problem for a degenerate parabolic-hyperbolic equation 
\begin{align}
&u_t+\nabla\cdot \bff(u)= \nabla_{x''}\cdot (B'(u) \nabla_{x''} u), \quad (t,x)\in (0,\infty)\X\Om,\label{e1.1} \\
&u(0,x)=u_0(x), \quad x\in\Om,\label{e1.2}\\
&u(t,x)=a_0(x), \quad x\in \Gamma'':=\Om'\X\po\Om'', \ t>0, \label{e1.3}\\
& \bff(u(t,x))\cdot \nu(x)= 0,\quad x\in \Gamma':=\po\Om'\X\Om'',\ t>0, \label{e1.4}
\end{align}
where  $\bff(u)=(f_1(u),\cdots,f_d(u))$, $f_i:\R\to\R$, $i=1,\cdots,d$,  smooth functions, we denote $\abf(u)=(a_1(u),\cdots,a_d(u)):=\bff'(u)$,  $B'(u)=(b_{ij}'(u))_{i,j=1}^d$, $b_{ij}'(u)=\frac{d}{du} b_{ij}(u)$, $b_{ij}=b_{ji}:\R\to\R$ , $i,j=1,\cdots,d$, smooth functions,  with $b_{ij}'(u)=0$, if $\min\{i,j\}\le d'$,  and
\begin{equation}\label{e1.B'}
b'(u)^2|\xi|^2 \le \sum_{i,j=d'+1}^db_{ij}'(u)\xi_i\xi_j\le \Lambda b'(u)^2|\xi|^2, \qquad \text{for all $\xi\in\R^{d''}$},
\end{equation}
for some smooth $b:\R\to\R$, with $b'(u)\ge0$ and some $\Lambda>0$. We also denote
 $$
 \nabla_{x''}:=(\underset{\text{$d'$ times}}{\underbrace{0,\cdots,0}},\partial_{x_{d'+1}},\cdots,\partial_{x_{d}}).
 $$  
and $\nu(x)$ is  the outward unit normal vector to $\po\Om$.  

Observe that $\nabla_{x''}B'(u)\nabla_{x''}u=\nabla_{x''}^2:B(u)$, with $B(u)=(b_{ij}(u))_{i=1}^d$, $b_{ij}(u)=0$, if $\min\{i,j\}\le d'$. Sometimes, whenever the context is such that what is meant is clear, we will also consider $B'(u)$ (or $B(u)$) as a $d''\X d''$ matrix and $\nabla_{x''}$ as $(\po_{x_{d'+1}},\cdots,\po_{x_d})$.  

{}From \eqref{e1.B'} we easily see that $b'(u)=0$ implies $b_{ij}'(u)=0$ for all $i,j=d'+1,\cdots,d$. Therefore, each $b_{ij}$ may be written as $b_{ij}(u)=\tilde b_{ij}(b(u))$ with $\tilde b_{ij}$ continuous. Moreover, \eqref{e1.B'} implies that $b'(u)^2\le b_{ii}'(u)\le \Lambda b'(u)^2$ and  $-(\Lambda-1)b'(u)^2\le b_{ij}'(u)\le (\Lambda-1)b'(u)^2$, if $i\ne j$, for all $u\in\R$ and $i,j=d'+1,\cdots,d$. Then, it follows  that the  $\tilde b_{ij}$ are in fact locally Lipschitz functions and so all the $b_{ij}$ may be written as locally Lipschitz functions of $b(u)$.  

Problems of the type of \eqref{e1.1}-\eqref{e1.4} may appear, for instance, in models in two-phase flow in porous medium where capillarity effects are considered only in certain directions.

We call
 $$
 \LL(\tau, \k, \xi)= i(\tau +\bfa(\xi)\cdot \k) + {\k''}^\top B'(\xi) \k''
 $$ 
 the symbol of \eqref{e1.1}, and
 $$
 \LL_0(\tau, \k, \xi)= i(\tau +\pi_{d'}(\bfa(\xi))\cdot \k') + {\k''}^\top B'(\xi) \k''
 $$ 
 the reduced symbol of \eqref{e1.1}, where  $\pi_{d'}(\bfa)=(a_1,\cdots,a_{d'})$ and we consider $B'(v)$ as a $d''\X d''$ matrix.  We assume that $\LL(\tau,\k,v)$ satisfies the conditions of Tadmor and Tao's averaging lemma~2.3 in \cite{TT}.
 Namely, for $J,\d>0$ and $\eta\in C_b^\infty(\R)$ nonnegative, let 
\begin{equation*}
\begin{aligned}
\Om_{\LL}^\eta(\tau,\eta;\d)&:=\{\xi\in \supp\eta\,:\, |\LL(i\tau, \k, \xi)|\le \d\},\\
\om_{\LL}^\eta(J;\d) &:= \sup_{\tiny\begin{matrix} (\tau,\k)\in\R^{d+1}\setminus\{0\}\\ |(\tau,\k)|\sim J\end{matrix}}|\Om_{\LL}^\eta(\tau, \k;\d)|
\end{aligned}
\end{equation*}
and $\LL_{\xi} :=\po_\xi\LL$ and we use $|\{\cdots\}|$ to denote the one-dimensional Lebesgue measure of $\{\cdots\}$. We suppose that there exist $\a\in (0,1)$, $\b>0$ and a measurable function  $\vartheta\in L_\loc^\infty(\R;[1,\infty))$ such that
\begin{equation}\label{e1.TT}
\begin{aligned}
\om_{\cL}^\eta(J;\d) &\lesssim_\eta \left(\frac{\d}{J^\b}\right)^\a,\\
\sup_{\tiny{\begin{matrix}(\tau,\k)\in\R^{d+1}\setminus\{0\}\\|(\tau,\k)|\sim J \end{matrix}}}\sup_{\xi\in\supp \eta}\frac{|\cL_{\xi}(i\tau,i\k;\xi)|}{\vartheta(\xi)}&\lesssim_\eta J^\b,\qquad \forall \d>0,\, J\gtrsim 1,
\end{aligned}
\end{equation}
where we employ the usual notation $x\lesssim y$, if $x\le Cy$, for some absolute constant $C>0$, and $x\sim y$, if $x\lesssim y$ and $y\lesssim x$, and $\lesssim_\eta$ means $\lesssim$ on the support of $\eta$.

The following example in the case where $d=2$, $d'=d''=1$,  is shown in corollary~4.5 in \cite{TT} to satisfy relations \eqref{e1.TT} which corresponds to (2.19) and (2.20) in \cite{TT} : 
$$
\rho_t(t,x)+\po_{x_1}(\frac1{\ell+1}\rho^{\ell+1}(t,x))= \po_{x_2}^2(\frac1{n+1}|\rho^n(t,x)|\rho(t,x)),
$$
where $\ell,n\in\N$ satisfy $n\ge 2\ell$. The same argument as in corollary~4.5 of \cite{TT} applies to the corresponding equation in any space dimension $d$,  with $d'=1$, $d''=d-1$, replacing $\po_{x_2}^2$ in the above equation by $\Delta_{x''}=\po_{x_2}^2+\cdots+\po_{x_d}^2$.

We observe that conditions \eqref{e1.TT}  imply  the weaker  non-degeneracy condition: For $(\tau,\k)\in\R^{d+1}$, with $\tau^2+|\k|^2=1$,  
\begin{equation}\label{e1.nondeg}
\left| \left\{\xi \in\supp\eta \,:\, |\tau+\pi_{d'}(\bfa)(\xi)\cdot\k'|^2+ \left( {\k''}^{\top} B'(\xi)\k''\right)^2 =0 \right \} \right|=0.
\end{equation}

We are going to seek  solutions of the initial-boundary value problem \eqref{e1.1}-\eqref{e1.4} which assume values in an interval, say $[u_{\min},u_{\max}]$, such that $u_0(x),a_0(x)\in [u_{\min},u_{\max}]$, and
\begin{equation}\label{e1.f}
\pi_{d'}(\bff)(u_{\min})=\pi_{d'}(\bff)(u_{\max})=0.
\end{equation}
In particular, we will assume that condition \eqref{e1.TT} holds for some $\eta \in C_c^\infty(\R)$ such that $\eta\equiv 1$ on $[u_{\min},u_{\max}]$. 

Several authors have contributed works dedicated to the study of degenerate parabolic equations, starting with Vol'pert and Hudjaev in \cite{VH}, giving the existence of solution for the Cauchy problem, whose extension to the Dirichlet boundary value problem appeared in \cite{WZ}. Uniqueness for the homogeneous Dirichlet problem, for the isotropic case, was only achieved  many years later by Carrillo in \cite{Ca}, using an extension of Kruzhkov's doubling of variables method \cite{Kr}. The result in \cite{Ca} was  extended to non-homogeneous Dirichlet data by Mascia, Porretta and Terracina in \cite{MPT}, by extending to the parabolic case ideas in \cite{BLN} further developed in \cite{O}.  In \cite{MPT}, in order to prove the uniqueness of solutions, use is made of the existence of the normal trace for $\DM^2$-vector fields which has been established in \cite{MPT} and \cite{CF2}, independently.   An $L^1$ theory for the Cauchy problem for anisotropic degenerate parabolic equations was established by Chen and Perthame \cite{CP}, based on the kinetic formulation (see \cite{PB}), and later also obtained using Kruzhkov's approach in \cite{CK}.  

The problem we address here combine two different types of boundary conditions. In the ``hyperbolic boundary'', $(0,T)\X \po\Gamma'$,  we impose the Neumann condition, and so, concerning the uniqueness of the solution, the treatment of this part of the boundary requires the use of the strong trace property first proved, in  the hyperbolic case, by Vasseur~\cite{Va}. This is  what was  done to prove the uniqueness of the solution of the Neumann problem  in the hyperbolic case in \cite{BFK}. Here, the major point concerning this part of the boundary is that the corresponding $\DM$-vector field is no longer $L^\infty$, but rather $L^2$, hence the strong trace theorem in \cite{Va} does not apply. However we prove an analogous strong trace theorem in this paper, which can be viewed as an extension of the one in \cite{Va}. We also impose a non-homogeneous Dirichlet  boundary condition in the ``parabolic boundary'', $(0,T)\X\Gamma''$. In order to treat this part of the boundary, we follow an approach  inspired by that in \cite{MPT}, and, while doing that,  we highlight the main points in the strategy and make  simplifications in the presentation and proofs in \cite{MPT}.  

This paper is only concerned with the case $d'>0$, since the case $d'=0$ is covered in \cite{MPT} and \cite{MV}, at least in the isotropic case. Although here we address the more general anisotropic case, as it will become clear,  the constraint \eqref{e1.B'} allows for proofs that are very similar to the isotropic case. In particular, condition \eqref{e1.TT} is not needed in the proof of the uniqueness result  if $d'=0$. In the latter case,  this condition can also be avoided in the proof of the existence of entropy solutions 
by adopting an approach based on the uniqueness of  measure-valued type solutions  as in \cite{MV}.   
 
 We now prepare the way to state our main results. We introduce the functions
 \begin{equation}\label{e1.5}
 \begin{aligned}
 & F(u,v):=\sgn (u-v) (\bff(u)-\bff(v)),\\
 &\Bbf(u,v)=(\sgn(u-v)(b_{ij}(u)-b_{ij}(v)))_{i,j=1}^d\\
 & K_{x''}(u,v):=  \nabla_{x''}\cdot \Bbf(u,v) -F(u,v),\\
 & H_{x''}(u,v,w):= K_{x''}(u,v)+K_{x''}(u,w)-K_{x''}(w,v),
 \end{aligned}
 \end{equation}
where  $\nabla_{x''}\cdot \Bbf(u,v)$ is the $d$-vector with components 
\begin{multline*}
(\nabla_{x''}\cdot \Bbf (u,v))_j\\=\begin{cases} 0, &\text{if $j\le d'$}\\ \sum\limits_{i=d'+1}^d\po_{x_i}(\sgn(u-v)(b_{ij}(u)-b_{ij}(v))), &\text{for $d'+1\le j\le d$}.\end{cases}
\end{multline*}
We also define
\begin{equation}\label{e1.6}
A(u,v,w)=|u-v|+|u-w|-|w-v|.
\end{equation}

 To address the Dirichlet condition, in order to take advantage of the fact that $\po\Om''$  is locally the graph of a $C^2$ function, we introduce a system of balls $\B''$, with the following property. For each $B''\in\B''$, $B''=B''(x_0'',r)$, a  ball in $\R^{d''}$ of radius $r>0$ around an arbitrary point $x_0''\in\po\Om''$, we have that for some $\gamma\in\Lip(\R^{d''-1})$,
\begin{equation}\label{e1.8B}
B''\cap \Om''=\{(\bar y'',y_{d})\in B''\,:\, y_{d}<\gamma(\bar y''),\ \bar y''=(y_{d'+1},\cdots,y_{d-1})\in \R^{d''-1}\},
\end{equation}
where the coordinate system $(y_{d'+1},\cdots,y_{d})$ is obtained from the original $(x_{d'+1},\\ \cdots,x_{d})$ by  relabeling, reorienting and translation.  By relabeling we mean a permutation of the coordinates and by reorienting we mean changing the orientation of one of the coordinate axes.

We assume that  $a_0\in C_0^2(\Om'\X\po\Om'')$. For each $B''\in\B''$, we take the following extension of $a_0$ to $\Om'\X B''$:   
\begin{multline}\label{e1.a00}
a_0(x',\bar y'',y_d)=a_0(x',\bar y'',\gamma(\bar y'')), \\ \text{for $(x',y'')\in \Om'\X B''$, with  $\gamma\in \Lip(\R^{d''-1})$ as in \eqref{e1.8B}}. 
\end{multline}

We also assume that $a_0$, extended to $\Om'\X B''$ as in \eqref{e1.a00}, satisfies
\begin{multline}\label{e1.C}
b_{ij}(u)\nabla_{x''}b(a_0(x)) =0,\\ \text{for $i\ne j$, $i,j=d'+1,\cdots,d$, for all $u\in [a,b]$ and $x\in \Om'\X B''$, for any $B''\in\B''$},
\end{multline}
by which it follows that either $b_{ij}(u)=0$, for $i\ne j$, $u\in[a,b]$,  or $\nabla_{x''}b(a_0(x))=0$, $x\in \Om'\X B''$, for any $B''\in\B''$. 

Further, for all $B''\in\B''$  and $k\in\R$, we assume that 
\begin{equation} \label{e1.a0}
\sgn (a_0-k)\in \BV(\Om'\X B''),\quad \text{for all $i,j=1,\cdots,d$, $k\in\R$}.
\end{equation}
In particular, we have that $K_{x''}(a_0,k)\in \BV(\Om'\X B'')$.

We observe that since $B'(u)$ is a non-negative symmetric matrix it possesses a square root non-negative symmetric matrix, $\s(u):= (\b_{ik}'(u))_{i,k=d'+1}^d$, that is,
\begin{equation}\label{e1.bij}
b_{ij}'(u)=\sum_{k=d'+1}^d \b_{ik}'(u)\b_{jk}'(u),
\end{equation}
with $\b_{ik}'(u)=\frac{d}{du}\b_{ik}(u)$, with $\b_{ik}=\b_{ki}:\R\to\R$ smooth functions.  We also observe that \eqref{e1.B'} implies that for smooth $v(x)$ we have
\begin{equation}\label{e1.b1}
|\nabla_{x''} b(v)|^2\le \sum_{k=d'+1}^d\left(\sum_{i=d'+1}^d \po_{x_i}\b_{ik}(v)\right)^2\le\Lambda |\nabla_{x''} b(v)|^2.
\end{equation}
More generally, given any smooth function $\eta(u)$, \eqref{e1.B'} implies
\begin{equation}\label{e1.b2}
|\nabla_{x''} b_\eta(v)|^2\le \sum_{k=d'+1}^d\left(\sum_{i=d'+1}^d \po_{x_i}\b_{ik\eta}(v)\right)^2\le\Lambda |\nabla_{x''} b_\eta(v)|^2.
\end{equation}
where $b_\eta'(u)=\eta'(u)b'(u)$ and $\b_{ij\eta}'(u)=\eta'(u)\b_{ij}'(u)$.

\begin{definition} \label{D:1.1} Assume that $u_0\in L^\infty(\Om)$.  Given $T>0$, for $U_T=(0,T)\X\Om$,  we say that a function $u\in L^\infty(U_T)$ is an entropy solution of the problem \eqref{e1.1}-\eqref{e1.4} if:
\begin{enumerate}
\item[{\bf(i)}] (Regularity) We have
\begin{equation}\label{e1.7}
\nabla_{x''} b(u)\in L^2(U_T);
\end{equation}
\item[{\bf(ii)}] (Entropy condition)  For all $\eta\in C^2(\R)$, with $\bff_\eta, B_\eta$ such that $\bff_\eta'=\eta'\bff'$ and $B_\eta'=\eta' B'$, and for all
$0\le \varphi\in C_0^\infty(U_T)$ 
\begin{multline}\label{e1.8E-1}
\int_{U_T}\{\eta(u)\po_t\varphi +\bff_\eta(u)\cdot\nabla \varphi-B_\eta(u):\nabla^2\varphi\}\,dx\,dt\\
 \ge \int_{U_T}\eta''(u)\sum_{k=d'+1}^d\left(\sum_{i=d'+1}^d\po_{x_i}\b_{ik}(u)\right)^2\varphi\,dx\,dt.
 \end{multline}
In particular, for all  $k\in\R$,
\begin{equation}\label{e1.8E}
\int_{U_T}\{|u-k|\po_t\varphi-K_{x''}(u,k)\cdot\nabla \varphi\}\,dx\,dt \ge 0.
\end{equation}

\item[{\bf(iii)}] (Neumann condition on $\Gamma'$) For all $\tilde \phi\in C_0^\infty((0,T)\X\R^{d'}\X\Om'')$,
\begin{equation}\label{e1.8N}
\int_{U_T}\{ u\po_t\tilde\phi +\bff(u)\cdot\nabla \tilde\phi-\nabla_{x''}\cdot B(u)\cdot\nabla_{x''}\tilde\phi\}\,dx\,dt=0,
\end{equation}
where $\nabla_{x''}\cdot B(u)$ is the $d$-vector whose $j$-th component is 0, if $j\le d'$, and $\sum_{i=d'+1}^d\po_{x_i}b_{ij}(u)$, for $d'+1\le j\le d$. Observe that the test function $\tilde\phi$ may not vanish over $\Gamma'_T:=(0,T)\X\Gamma'$. 

\item[{\bf(iv)}] (Dirichlet condition on $\Gamma''$) For each $B''\in\B''$,  for $\mu_0:=|\div K_{x''}(a_0,k)|$, i.e., the total variation measure of the (signed) measure over $\Om'\X B''$, 
$\div  K_{x''}(a_0,k)$,  and some constant $C_*>0$ depending only on $\bff, b, a_0$,  we have, for all $0\le\tilde \varphi\in C_0^\infty((0,T)\X\Om'\X B'')$, 
\begin{multline}\label{e1.9}
\int_{U_T}\{|u(t,x)-a_0(x)|\po_t\tilde\varphi-K_{x''}(u(t,x),a_0(x))\cdot\nabla\tilde \varphi\}\,dx\, dt\\ \ge -C_*\int_{U_T}  \tilde \varphi\,dx\,dt,
\end{multline}
and, for all $k\in\R$,
\begin{multline}\label{e1.9'}
\int_{U_T}\{A(u(t,x),k,a_0(x))\po_t\tilde\varphi-H_{x''}(u(t,x),k,a_0(x))\cdot\nabla\tilde \varphi\}\,dx\, dt\\ \ge -C_*\int_{U_T}\tilde \varphi\,dx\,dt -\int_{U_T}\tilde \varphi\,d\mu_0(x)\,dt.
\end{multline}
Moreover,  
\begin{multline}\label{e1.8D'}
(t,x'')\in (0,T)\X\Om'' \mapsto \int_{\Om'}|b(u(t,x',x''))-b(a_0(x',x''))|\tilde\varphi(t,x',x'') \,dx' \\ \in L^2((0,T); H_0^1(\Om'')).
\end{multline}
\item[{\bf(v)}] (Initial condition)
\begin{equation}\label{e1.10}
\esslim_{t\to 0+} \int_{\Om} |u(t,x)-u_0(x)|\,dx=0.
\end{equation}
\end{enumerate}
\end{definition}

We can now state our main result concerning problem \eqref{e1.1}--\eqref{e1.4}.

\begin{theorem}\label{T:1.1} There exists a unique entropy solution of \eqref{e1.1}--\eqref{e1.4}.
\end{theorem}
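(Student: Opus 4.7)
The plan is to handle existence and uniqueness separately, with uniqueness being by far the more delicate step.

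\textbf{Existence.} I would construct an entropy solution via vanishing viscosity. For each $\varepsilon>0$, regularize \eqref{e1.1} by adding $\varepsilon\Delta u^\varepsilon$ on the right, preserving \eqref{e1.2} and \eqref{e1.3} and replacing \eqref{e1.4} by the natural uniformly parabolic Neumann condition $(\bff(u^\varepsilon)-B'(u^\varepsilon)\nabla_{x''}u^\varepsilon-\varepsilon\nabla u^\varepsilon)\cdot\nu=0$ on $\Gamma'$. Standard quasilinear parabolic theory gives a smooth solution $u^\varepsilon$. Condition \eqref{e1.f} together with the maximum principle yields the uniform bound $u^\varepsilon\in[u_{\min},u_{\max}]$, and the energy identity (multiply by $u^\varepsilon$ and integrate) produces a uniform $L^2$ bound for $\nabla_{x''}b(u^\varepsilon)$ and a uniform $\sqrt{\varepsilon}$-bound for $\nabla u^\varepsilon$. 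To upgrade weak to strong compactness I would invoke the Tadmor–Tao averaging lemma (Lemma 2.3 of \cite{TT}) using assumption \eqref{e1.TT}: writing the Kruzhkov entropy inequalities in kinetic form, the nondegeneracy \eqref{e1.TT} gives $L^1_{\loc}$ compactness of $u^\varepsilon$. Passing to the limit in each item of Definition \ref{D:1.1} is then routine, the only delicate bit being \eqref{e1.8E}, where the right-hand side is lower semicontinuous along the sequence by \eqref{e1.b2} and Fatou, and \eqref{e1.9}--\eqref{e1.9'}, which follow by computing directly with $u^\varepsilon$ and $a_0$, using the BV regularity \eqref{e1.a0} to absorb the terms involving $\nabla a_0$ into the constant $C_*$ and the measure $\mu_0$.

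\textbf{Uniqueness.} For two entropy solutions $u,v$ I would use the Carrillo–Kruzhkov doubling of variables method with respect to the hyperbolic directions $x'$ and time, while exploiting the $L^2$ regularity \eqref{e1.7} and the algebraic identities \eqref{e1.bij}--\eqref{e1.b2} to control the parabolic part in $x''$. In the interior this produces, via \eqref{e1.8E} applied to both $u(t,x)$ and $v(s,y)$ and letting the doubling parameters collapse, the standard Kato inequality $\partial_t|u-v|+\nabla\cdot F(u,v)-\nabla_{x''}^2\!:\!\Bbf(u,v)\le 0$ in $\DD'$, where the parabolic dissipation cancels the singular limit of the $\sgn_\delta'$ terms in the usual Carrillo way.

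\textbf{Boundary handling.} The main obstacle is coupling this interior inequality with the two boundary conditions. On the parabolic boundary $\Gamma''$, I follow the Mascia–Porretta–Terracina strategy: the $H^1$-type condition \eqref{e1.8D'} provides a trace of $b(u)$ equal to $b(a_0)$, and the triangular inequality \eqref{e1.9'} for the auxiliary function $A(u,k,a_0)$ replaces the Bardos–LeRoux–N\'ed\'elec boundary entropy condition, allowing one to insert the boundary value $a_0$ as the Kruzhkov constant in a neighbourhood of $\Gamma''$. On the hyperbolic boundary $\Gamma'$, the $\DM$-field obtained from $F(u,v)-\nabla_{x''}\!\cdot\!\Bbf(u,v)$ is only $L^2$ (not $L^\infty$) because of the diffusive component, so Vasseur's strong trace theorem does not directly apply. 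Here one uses the extension of the strong trace property to $L^2$-$\DM$ fields established earlier in the paper (based on the $L^2$ normal trace formula), which furnishes a strong trace of the flux on $\Gamma'$; combined with \eqref{e1.8N} this yields $\bff(u)\cdot\nu=0$ strongly on $\Gamma'_T$, so the boundary term vanishes after integration by parts.

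\textbf{Conclusion.} Integrating the Kato inequality against a suitable cutoff that is compactly supported away from $\Gamma''$ but approaches $1$ on $\Gamma'$, all boundary contributions drop out by the above analyses, and the initial condition \eqref{e1.10} gives $\int_\Omega|u(t,\cdot)-v(t,\cdot)|\,dx\le\int_\Omega|u_0-v_0|\,dx=0$, proving uniqueness. The hardest step I expect is precisely the matching of the two boundary treatments: making rigorous that the doubling-of-variables inequality reaches $\Gamma'$ with an integrable normal flux, so that the new $L^2$ strong-trace result can be invoked, while simultaneously honoring the $\BV$-based boundary layer analysis on $\Gamma''$ under the compatibility alternative \textbf{(C)}/\textbf{(C')}.
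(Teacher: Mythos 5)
Your proposal follows essentially the same route as the paper: vanishing viscosity with the maximum principle from \eqref{e1.f} and an energy estimate for $\nabla_{x''}b(u^\ve)$, strong compactness via the Tadmor--Tao averaging lemma under \eqref{e1.TT}, Carrillo--Kruzhkov doubling with the Mascia--Porretta--Terracina boundary-layer treatment at $\Gamma''$, and the new $L^2$-$\DM$ strong-trace theorem at $\Gamma'$ to kill the Neumann boundary term. Two details worth being careful about when filling this in: the energy estimate requires multiplying by $u^\ve-\tilde a_0$ (with $\tilde a_0$ a $C^2$ extension of the Dirichlet data) rather than by $u^\ve$, so that the $\Gamma''$ boundary terms are controlled; and the alternatives \textbf{(C)}/\textbf{(C')} are not only used in the uniqueness step but already in the existence proof of \eqref{e1.9}--\eqref{e1.9'}, where they guarantee the Kato-type inequality $\sgn(u^\ve-a_0)\nabla_{x''}^2:(B(u^\ve)-B(a_0))\le\nabla_{x''}^2:\Bbf(u^\ve,a_0)$.
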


Another main result of this paper is the following extension of the Strong Trace Theorem of Vasseur~\cite{Va}. The definition of strongly regular deformation is given in Definition~\ref{D:2.1}. This result is an essential tool for the proof of Theorem~\ref{T:1.1}.
  
 \begin{theorem}\label{T:1.2}  Let  $\Gamma_T'$, and let $K_{x''}$ be as above. Assume \eqref{e1.TT} holds, and let  $u(t,x)\in L^\infty(U_T)$ satisfy $\nabla_{x''}b(u)\in L^2(U_T)$ and ,  for all $0\le \varphi\in C_0^\infty(U_T)$ and $k\in\R$,
 \begin{equation}\label{e1.T12}
\int_{U_T}\{|u-k|\po_t\varphi-K_{x''}(u,k)\cdot\nabla \varphi\}\,dx\,dt\ge0.
\end{equation}
Then, there exists $u^\tau\in L^\infty(\Gamma_T')$  such that, for any  deformation of $\po\Om'$,  $\Psi': [0,1]\X\po \Om'\to \bar\Om'$, strongly regular over $\po\Om'$,  if  $\Psi: [0,1]\X\Gamma'_T \to \bar U_T$, is defined by $\Psi(s,t,x',x'')=(t,\Psi'(s,x'),x'')$, we have $u(\Psi(s,t,x))\to u^\tau(t,x)$, as $s\to0$, in $L^1(\Gamma_T')$.
\end{theorem}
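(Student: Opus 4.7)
The plan is to adapt the blow-up strategy of Vasseur \cite{Va} to the parabolic--hyperbolic setting. The crucial geometric observation is that, since $\Gamma'=\po\Om'\X\Om''$, the outward unit normal to $\po\Om$ on $\Gamma'$ lies entirely in the $x'$--plane, while the diffusion $\nabla_{x''}\!\cdot\!B(u)\nabla_{x''}u$ acts only in the tangential $x''$ directions. Consequently, the components of $K_{x''}(u,k)$ normal to $\Gamma'$ are the $L^\infty$ quantities $-\sgn(u-k)(f_i(u)-f_i(k))$ for $i=1,\dots,d'$, whereas the entire $L^2$ piece of the field sits in the tangential components. This is what makes an $L^\infty$--style strong trace theorem plausible even though the underlying divergence--measure field in \eqref{e1.T12} is only $\DM^2$.

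I would first localize and flatten: by a partition of unity on $\po\Om'$ it suffices to work in a coordinate chart around an arbitrary boundary point where, after a change of coordinates, $\po\Om'$ is a piece of a hyperplane and $\Psi'$ parametrizes a one--sided tubular neighborhood with $\Psi'(0,y')=y'\in\po\Om'$. Set $U^\ve(t,y',x''):=u\bigl(t,\Psi'(\ve,y'),x''\bigr)$; the problem reduces to showing that $U^\ve$ converges in $L^1_{\loc}$ as $\ve\to 0^+$ and that the limit is independent of the chosen strongly regular $\Psi'$.

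To obtain compactness, for $\s\in(0,1)$ introduce the blow--up family
\begin{equation*}
v^\ve(\s,t,y',x''):=u\bigl(t,\Psi'(\ve\s,y'),x''\bigr),
\end{equation*}
so that $U^\ve=v^\ve|_{\s=1}$. Pulling \eqref{e1.T12} back through the chart, $v^\ve$ satisfies a family of entropy inequalities on $(0,1)$ times the boundary chart in which the derivative in the normal variable $\s$ is magnified by a factor $\ve^{-1}$, while the tangential bound $\nabla_{x''}b(u)\in L^2$ transfers uniformly to $\nabla_{x''}b(v^\ve)\in L^2$. The kinetic formulation of this rescaled inequality has the structure required by \cite[Lemma 2.3]{TT}, whose applicability is ensured by the non--degeneracy hypothesis \eqref{e1.TT}. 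This yields equi--compactness of the velocity averages of $v^\ve$, hence of $v^\ve$ itself, in $L^1_{\loc}$; along a subsequence $v^{\ve_k}\to\bar v$.

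To identify $\bar v$, multiply the rescaled entropy inequality by $\ve$ and send $\ve\to 0$: the parabolic dissipation vanishes, only the normal hyperbolic flux survives, and for every $k\in\R$ one obtains
\begin{equation*}
\po_\s\bigl(\sgn(\bar v-k)(\pi_{d'}(\bff)(\bar v)-\pi_{d'}(\bff)(k))\cdot\nu\bigr)\le 0
\end{equation*}
in the sense of distributions. A Kruzhkov--entropy argument combined with the one--dimensional non--degeneracy \eqref{e1.nondeg} then forces $\bar v$ to be independent of $\s$ a.e., so that every subsequential limit is a function $u^\tau(t,y',x'')$ of the tangential variables alone; a diagonal argument shows that all such limits coincide, yielding convergence of the entire family, and independence of $\Psi'$ follows by comparing two strongly regular deformations through a joint blow--up. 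The main obstacle is the compatibility of the $L^2$ dissipation
\begin{equation*}
\sgn_\d'(u-k)\sum_{k=d'+1}^d\Bigl(\sum_{i=d'+1}^d\po_{x_i}\b_{ik}(u)\Bigr)^2,
\end{equation*}
which appears as a nonnegative measure on the right--hand side of the kinetic equation, with the Tadmor--Tao averaging framework: because its derivatives are purely tangential to $\Gamma'$ it is bounded uniformly in the space of measures and can be absorbed into the source term, but one must verify carefully that \eqref{e1.TT} is preserved at the kinetic level once this parabolic perturbation is included.
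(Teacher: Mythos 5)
Your high-level plan (blow-up at the boundary plus a velocity averaging lemma, following Vasseur) is the same as the paper's, and the geometric observation that the normal component of $K_{x''}(u,k)$ is $L^\infty$ on $\Gamma'$ is also used there. But the specific implementation has several gaps that would make the argument fail.

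First, the blow-up is wrong. You rescale only the normal variable, isotropically, setting $v^\ve(\s,t,y',x'')=u(t,\Psi'(\ve\s,y'),x'')$. After multiplying the pulled-back entropy inequality by $\ve$ and letting $\ve\to 0$, all tangential transport and the whole parabolic dissipation drop out, and you are left with a one-dimensional transport-type inequality in $\s$ alone. The Tadmor--Tao averaging lemma is a statement about a genuinely multi-dimensional operator $\LL(i\xi,v)$ obeying \eqref{e1.TT}; it is not available for the degenerate one-dimensional limit, and it is not applicable uniformly in $\ve$ before the limit either, because the $\ve^{-1}\po_\s$ term dominates. The paper resolves this by rescaling \emph{all} variables simultaneously with the anisotropic parabolic scaling $\Lambda(\ve)(\hat z',\hat z'')=(\ve\hat z',\ve^{1/2}\hat z'')$ (and $y_0\mapsto \ve y_0$). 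This keeps both the hyperbolic first-order part in $\hat y'$ and the second-order diffusion in $\hat y''$ of order one in the blow-up equation \eqref{e5.11}, which is precisely the non-degenerate structure required to invoke \cite{TT}. Without this anisotropic scaling there is nothing for the averaging lemma to bite on.

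Second, you never control the kinetic entropy defect near the boundary. The kinetic formulation $\po_t f+\bfa(\xi)\cdot\nabla f - B'(\xi):\nabla^2_{x''}f=\po_\xi m$ carries a nonnegative measure $m$, and a key step (Claim \#3 in the paper, following Vasseur's Lemma~2) is to extract a sequence $\ve_n\to 0$ and a set of full measure of boundary points at which the mass of $m$ in the anisotropic $\ve_n$-cylinders, suitably normalized, tends to zero. This is what makes the blow-up limit satisfy the homogeneous kinetic equation \eqref{e5.13}, which is essential: if the limit kinetic equation keeps a nontrivial right-hand side, the limit is not forced to be a $\chi$-function. Your proposal absorbs the parabolic dissipation ``into the source term'' and moves on, but never shows that this source vanishes under the blow-up.

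Third, the identification step is too loose. Saying that a Kruzhkov argument with the one-dimensional non-degeneracy ``forces $\bar v$ to be independent of $\s$'' is not a proof; in fact the transport speed you are invoking can vanish on the $\xi$-set of positive measure not excluded by \eqref{e1.nondeg} alone. The paper instead first establishes the existence of a weak trace $f^\tau$ for the kinetic function via the $\DM^1$ normal-trace theorem (Theorem~\ref{T:2.3'}), shows that $\tilde a^0(\hat y,\xi)\tilde f_\infty\in C^0(\R_+,W^{-2,\infty})$ with $\tilde a^0\neq 0$ a.e.\ by the non-degeneracy written in the rotated frame, matches the blow-up limit at $\un y_0=0$ with $f^\tau$ (Claim \#6), and only then runs a change-of-variables argument to conclude that $\tilde f_\infty$ is constant and hence a $\chi$-function. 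The passage from ``$f^\tau$ is a $\chi$-function a.e.'' to strong $L^1$ convergence along the deformation is a separate lemma (the analogue of Vasseur's Lemma~1, used via Claim \#2) that your proposal also leaves implicit.

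In short: the route is the right one in spirit, but the isotropic one-variable blow-up does not produce a setting where the Tadmor--Tao lemma applies, the vanishing of the rescaled kinetic measure is not addressed, and the identification of the blow-up limit and of the weak trace is missing. These are not cosmetic omissions; each one is a point at which the argument, as written, breaks down.
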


 We will use here a slight extension of the concept of boundary layer sequence as  defined in \cite{MPT}, which is in agreement with the type of sequence used for the same purpose in the theory of divergence-measure fields, as recalled in  Section~\ref{S:3}. Namely, we say that $\z'_\d$ is a {\em $\Om'$-boundary layer sequence} if, for each $\d>0$, $\z'_\d\in \Lip(\bar\Om')$,  $0\le\z'_\d\le1$, $\z'_\d(x')\to1$ for every $x'\in\Om'$, as $\d\to0$, and $\z'_\d=0$, on $\po\Om'$.  We define an
 $\Om''$-boundary layer sequence $\z''_\d$ in a totally similar manner replacing $\Om'$ by $\Om''$ and $x'$ by $x''$.

 \begin{remark} \label{R:1.1} Let $\Psi: [0,1]\X \po\Om''\to\bar\Om''$ be any Lipschitz deformation for $\po\Om''$ (see, Definition~\ref{D:2.1}), and let $h:\bar\Om''\to [0,1]$ be the associated level set function, that is, $h(x'')=s$, for $x''\in \Psi(s,\po\Om'')$, and $h(x'')=1$, for $x''\in\Om''\setminus\Psi([0,1]\X\po\Om'')$. Then, 
\begin{equation}\label{e1.R1}
 \z_\d(x'')=\frac1{\d}\min\{\d,h(x'')\},
 \end{equation}
 for $0<\d<1$, is a $\Om''$-boundary layer sequence, which we will call the $\Om''$-{\em level set boundary layer sequence} associated with the deformation $\Psi$. Assuming that each $\Psi(s, \po\Om'')$, $s\in[0,1]$, is at least of class $C^{1,1}$, we have the following
 \begin{equation}\label{e1.R1'}
\begin{aligned}
& \nabla\z_{\d}(x'')=-\frac1{\d}\chi_{{}_{\{0< \z_{\d}(x'')<1\}}}(x'') N(x''),\\
&\nabla_{x''}^2 \z_{\d}(x'')|\Om''= -\frac{N(x'')\otimes \nu(x'')}{\d}d\H^{d''-1}(x'')\lfloor\Psi(\d,\po\Om'') \\
&\qquad\qquad\qquad+ -\frac1{\d}\chi_{{}_{\{0< \z_{\d}(x'')<1\}}}(x'') \nabla_{x''} N(x'') ,
\end{aligned}
\end{equation}
where $N(x'')=\l(x'')\nu(x'')$, $\nu(x'')$ denotes the outward unity normal to $\Psi(\d\z_{\d}(x''),\po\Om'')$,  $\l(x'')$ is a positive Lipschitz function, and $\H^{d''-1}\lfloor \Psi(\d,\po\Om'')$ denotes the $(d''-1)$-dimensional Hausdorff measure restricted to the hyper-surface $\Psi(\d,\po\Om'')$. 
\end{remark}

 Concerning Definition~\ref{D:1.1}, we have the following. 
 
 \begin{lemma}\label{L:1.1} Condition \eqref{e1.9'} in Definition~\ref{D:1.1} implies that, for any $\Om''$-boundary layer sequence $\z''_\d$ and all $0\le \tilde\varphi\in C_0^\infty((0,T)\X \Om'\X B'')$, $B''$ as in \eqref{e1.8B},  we have
 \begin{equation}\label{e1.11}
 \liminf_{\d\to0}\int_{U_T}H_{x''}(u,k,a_0)\cdot\nabla\z''_\d(x'')\tilde\varphi(t,x)\,dx\,dt\ge0.
 \end{equation}
 Moreover,  Definition~\ref{D:1.1} implies that, for any $0\le \psi''\in C_0^\infty(B'')$, the distributions over $U_T$,  
 \begin{gather*}
\ell_1:= -\psi''(\po_t|u-a_0|- \nabla\cdot K_{x''}(u,a_0))\\
\ell_2:=- \psi''(\po_t A(u,k,a_0)-\nabla\cdot H_{x''}(u,k,a_0)),
 \end{gather*}
 are (signed) measures with finite total variation over $U_T$, uniformly for $k$ in bounded intervals. Furthermore, the distribution over $U_T$,
$$
\ell_0:= -\po_t|u-k|+\nabla\cdot K_{x''}(u,k),
$$
is a positive measure with finite total variation over $U_T$,  uniformly for $k$ in bounded intervals.    
 
  \end{lemma}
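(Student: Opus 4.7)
The plan is to dispatch the three assertions in turn. For \eqref{e1.11} I will apply \eqref{e1.9'} to a carefully chosen boundary-layer test function, while the measure character of $\ell_0,\ell_1,\ell_2$ will follow from the Riesz representation theorem applied to the non-negative distributions furnished by the definition, together with a standard mass bookkeeping argument based on the $L^2$--$\DM^2$ theory recalled in the introduction.

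For \eqref{e1.11}, the crucial step is to test \eqref{e1.9'} against $\tilde\varphi(1-\z''_\d)$, which (after a standard mollification in $x''$ preserving non-negativity) belongs to $C_0^\infty((0,T)\times\Om'\times B'')$: its support is inherited from that of $\tilde\varphi$, and $1-\z''_\d$ is Lipschitz on $B''$ once $\z''_\d$ is extended by $0$ outside $\bar\Om''$ (the extension being continuous across $\po\Om''$ because $\z''_\d\equiv 0$ there). Using $\po_t\z''_\d=0$ and the Leibniz identity $\nabla(\tilde\varphi(1-\z''_\d))=(1-\z''_\d)\nabla\tilde\varphi-\tilde\varphi\nabla\z''_\d$, the inequality \eqref{e1.9'} rearranges into
\begin{align*}
\int_{U_T} H_{x''}\cdot\tilde\varphi\,\nabla\z''_\d\,dx\,dt &\ge -\int_{U_T}A\,(1-\z''_\d)\,\po_t\tilde\varphi\,dx\,dt + \int_{U_T}H_{x''}\cdot(1-\z''_\d)\,\nabla\tilde\varphi\,dx\,dt \\
&\quad - C_*\int_{U_T}\tilde\varphi(1-\z''_\d)\,dx\,dt - \int_{U_T}\tilde\varphi(1-\z''_\d)\,d\mu_0\,dt.
\end{align*}
Since $(1-\z''_\d)(x'')\to 0$ for every $x''\in\Om''$ with $0\le 1-\z''_\d\le 1$, every term on the right tends to $0$ as $\d\to 0$ by dominated convergence: $A\in L^\infty(U_T)$, $H_{x''}\in L^2_{\loc}(U_T)$ by virtue of $\nabla_{x''}b(u)\in L^2$, and $\mu_0$ is a finite Radon measure on $\Om'\times B''$ whose mass on $\Om'\times\po\Om''$ vanishes. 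Taking $\liminf$ then gives \eqref{e1.11}.

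For the measure assertions, \eqref{e1.8E} shows $\ell_0$ acts non-negatively on $C_0^\infty(U_T)$ test functions (the right-hand side being a $\limsup$ of non-negative integrals), so by the Riesz representation theorem $\ell_0$ is a non-negative Radon measure on $U_T$; the uniform finite total variation over $U_T$ for $k$ in bounded intervals then follows from the $L^\infty$ bound on $u$, the $L^2(U_T)$ bound on $K_{x''}(u,k)$, and the normal trace estimates for $L^2$ divergence-measure fields recalled in the introduction. The identical pattern handles $\ell_1$ and $\ell_2$: \eqref{e1.9} (respectively \eqref{e1.9'}) identifies $-\po_t|u-a_0|+\nabla\cdot K_{x''}(u,a_0)+C_*$ (resp.\ $-\po_t A+\nabla\cdot H_{x''}+C_*+\mu_0$) as a non-negative Radon measure on $(0,T)\times\Om'\times(B''\cap\Om'')$, and multiplication by $\psi''\in C_0^\infty(B'')$ extracts $\ell_1$ (resp.\ $\ell_2$) as the difference of two non-negative measures both supported in $(0,T)\times\bar\Om'\times\supp\psi''$, hence signed measures of finite total variation, uniformly in $k$.

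The hard part will be the verification that $\mu_0$ places no mass on the hypersurface $\Om'\times\po\Om''$: if it did, a positive residue would remain in the limit of $\int\tilde\varphi(1-\z''_\d)\,d\mu_0\,dt$ and \eqref{e1.11} could fail. This is ruled out precisely by the structure of the extension \eqref{e1.a00}, which being constant in the $y_d$-direction renders $a_0$ (and each $b_{ij}(a_0)$) Lipschitz across $\po\Om''$, so the BV representatives of $\sgn(b_{ij}(a_0)-b_{ij}(k))$ guaranteed by \eqref{e1.a0} have no jump at that surface and consequently the divergence of $K_{x''}(a_0,k)$ cannot concentrate there.
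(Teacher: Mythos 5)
Your treatment of \eqref{e1.11} is correct and is the same argument as the paper's: test \eqref{e1.9'} against $\tilde\varphi(1-\z''_\d)$ and pass to the limit. Your worry about $\mu_0$ charging $\Om'\times\po\Om''$ is moot: the integral $\int_{U_T}\tilde\varphi(1-\z''_\d)\,d\mu_0\,dt$ runs over $U_T=(0,T)\times\Om$, and $(1-\z''_\d)\to 0$ at \emph{every} point of the open set $\Om''$, so dominated convergence applies to $\mu_0\lfloor(\Om'\times(\Om''\cap B''))$ without any structural information about $a_0$.

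The measure assertions, however, contain a genuine gap. You pass from ``$\ell_0$ (resp.\ $\ell_1+C_*\psi''$, $\ell_2+C_*\psi''+\psi''\mu_0$) is a non-negative Radon measure on $U_T$'' to ``$\ell_0,\ell_1,\ell_2$ have finite total variation over $U_T$.'' This implication is false in general: a non-negative Radon measure on an open set can have infinite total mass, and a difference of two such measures need not be a signed measure at all. The Riesz representation you invoke gives only finiteness on compacts. The ``normal trace estimates for $L^2$-$\DM$ fields'' cannot be used to close the gap, since applying them to $G^k=(|u-k|,-K_{x''}(u,k))$ presupposes $G^k\in\DM^2(U_T)$, which is precisely the finite-mass statement you are trying to prove; and the naive pairing $\int K_{x''}\cdot\nabla\z_\d$ with a generic boundary layer $\z_\d$ is not controlled because $K_{x''}\in L^2$ while $\|\nabla\z_\d\|_{L^2}\sim\d^{-1/2}$.

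What the paper actually does, and what your argument is missing, is a test-function extension step: the lower bounds $\la\ell_i,\tilde\varphi\ra\ge -C\|\tilde\varphi\|_\infty$ (valid for $\tilde\varphi\in C_0^\infty(U_T)$) are upgraded to bounds on all $\bar\varphi\in C^\infty(\bar U_T)$ by inserting $\xi_h(t)\,\z'_\d(x')\,\bar\varphi$ and letting $h,\d\to 0$. The choice of a boundary layer $\z'_\d$ in the $x'$-variables \emph{only} is essential: $\nabla\z'_\d$ lies in the hyperbolic directions, where $K_{x''}$ and $H_{x''}$ reduce to the $L^\infty$ convective part $F$, so the boundary term $\int F\cdot\nabla\z'_\d\,\psi''\bar\varphi$ is controlled by $\|F\|_\infty\,\|\nabla\z'_\d\|_{L^1}$. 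The $L^2$ parabolic flux $\nabla_{x''}\cdot\Bbf$ never meets $\nabla\z'_\d$. Together with the time layer $\xi_h$ this yields a lower bound by $-C\|\bar\varphi\|_\infty$ for all $\bar\varphi\in C^\infty(\bar U_T)$, and the substitution $\bar\varphi\mapsto\|\bar\varphi\|_\infty\pm\bar\varphi$ then gives the two-sided bound $|\la\ell_i,\bar\varphi\ra|\le C\|\bar\varphi\|_\infty$ needed for finite total variation. For $\ell_0$ there is a further step you omit: near $\Gamma''$ one cannot pair $\nabla_{x''}\cdot\Bbf(u,k)$ directly with an $x''$-boundary layer, so the paper introduces a partition of unity $\{\psi''_J\}$ on $\Om''$ and uses the algebraic identity $\psi''_J\ell_0 = \ell_{2,J}-\ell_{1,J}+\psi''_J\,\div K_{x''}(a_0,k)\ge \ell_{2,J}-\ell_{1,J}-\psi''_J\mu_0$ on each boundary patch, while the interior patch $\psi''_0$ is handled directly from \eqref{e1.8E} via $\xi_h\z'_\d$. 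Without these steps, the finite total variation claim is unproved.
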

 
\begin{proof} For the proof of \eqref{e1.11} from \eqref{e1.9'},  we just substitute $\tilde \varphi$ in  \eqref{e1.9'} by $\tilde \varphi (1-\z_\d)$ where $\z_\d$ is any $\Om''$-boundary layer sequence, make $\d\to0$, observe that $(1-\z_\d)$ tends everywhere to zero in $U_T$, and we easily get \eqref{e1.11}. 

To prove the that $\ell_1$ and $\ell_2$ are signed measures with finite total variation over $\Om'\X(\Om''\cap B'')$, we substitute $\tilde\varphi$ in \eqref{e1.9} and \eqref{e1.9'} by
$\psi'' \tilde\varphi$ and after trivial manipulations we deduce
\begin{gather*}
\la \ell_1, \tilde\varphi\ra\ge -C(k,T,\Om,\bff,a_0, \|u\|_\infty,\|\nabla_{x''}b(u)\|_{L^2},  \|\psi'',\nabla\psi''\|_\infty)\|\tilde \varphi\|_\infty,\\
\la \ell_2, \tilde\varphi \ra\ge -C(k,T,\Om,\bff, a_0, \|u\|_\infty,\|\nabla_{x''}b(u)\|_{L^2},  \|\psi'',\nabla\psi''\|_\infty)\|\tilde \varphi\|_\infty \\
-\|\psi''\|_{\infty}\int_{U_T}\tilde\varphi\,d\mu_0(x)\,dt,
\end{gather*}
where $C(k,T,\Om,\bff,a_0, \|u\|_\infty,\|\nabla_{x''}b(u)\|_{L^2},  \|\psi'',\nabla\psi''\|_\infty)$ is a positive constant depending only on its arguments. 

Now, we are going to get the corresponding relations for $\ell_1$ and $\ell_2$ applied to $0\le\bar\varphi\in C^\infty(\bar U_T)$. We consider the $\Om'$-level set boundary layer sequence, $\z_\d'(x')$, associated with the level set function of the deformation $\Psi':[0,1]\X\po\Om'\to\bar\Om'$, given by $\Psi'(s,x')=x'-s\ve_0\nu(x')$, where $\nu(x')$ is the outward unity normal to $\po\Om'$ at $x'$, and $\ve_0$ is sufficiently small so that $\Psi'$ so defined is injective. We also consider a sequence $\xi_h(t)\in C_0^\infty((0,T))$, with $0\le \xi_h(t)\le 1$, and 
$\xi_h\to 1$, as $h\to0$, everywhere in $(0,T)$. If we replace $\tilde\varphi$ in the relations above for $\ell_1$ and $\ell_2$ by $\xi_h\z'_\d\bar\varphi$, with   $0\le\bar\varphi\in C^\infty(\bar U_T)$, we trivially obtain,  after taking $\d\to0$ and $h\to0$,
\begin{gather*}
\la \ell_1, \bar\varphi\ra\ge -C(k,T,\Om,\bff,a_0, \|u\|_\infty,\|\nabla_{x''}b(u)\|_{L^2},  \|\psi'',\nabla\psi''\|_\infty)\biggl( \|\bar \varphi\|_\infty\\
+\int_{\Om}(\bar\varphi(0,x)+\bar\varphi(T,x))\,dx +\int_{\Gamma_T'}\bar\varphi\,dx\,dt\biggr),\\
\la \ell_2, \bar\varphi \ra\ge -C(k,T,\Om,\bff, a_0, \|u\|_\infty,\|\nabla_{x''}b(u)\|_{L^2},  \|\psi'',\nabla\psi''\|_\infty)\biggl(\| \bar \varphi\|_\infty  \\
+\int_{\Om}(\bar\varphi(0,x)+\bar\varphi(T,x))\,dx +\int_{\Gamma_T'}\bar\varphi\,dx\,dt\biggr)-\|\psi''\|_{\infty}\int_{U_T}\bar\varphi\,d\mu_0(x)\,dt. 
\end{gather*}
Then, replacing $\bar\varphi$ by $\|\bar\varphi\|_\infty\pm \bar\varphi$ in these relations, we finally obtain
\begin{gather*}
|\la \ell_1, \bar\varphi\ra|\le C\|\bar\varphi\|_{\infty},\qquad
|\la \ell_2, \bar\varphi\ra|\le C\|\bar\varphi\|_{\infty}, 
\end{gather*}
for some $C>0$ depending on $k,T,\Om,\bff,a_0, \|u\|_\infty,\|\nabla_{x''}b(u)\|_{L^2},  \|\psi'',\nabla\psi''\|_\infty$, uniformly bounded for $k$ in bounded intervals, which implies the assertions for $\ell_1$ and $\ell_2$.  

Now, concerning $\ell_0$, we take $\psi_0''\in C_0(\Om'')$, $0\le\psi_0''\le1$, and  $\psi_J'\in C_0^\infty(B_J'')$, $B_J''\in\B''$, $J=1,\cdots,N$, such that $\sum_{J=0}^N\psi_J''(x'')=1$, for any $x''\in\Om''$.  Now, for each $\bar\varphi\in C^\infty(\bar U_T)$,  on each $B''_J$, $J\ge1$,  let  $\ell_{1,J}$ and $\ell_{2,J}$ be the distributions corresponding to $\ell_1$ and $\ell_2$, with $\psi''$ replaced by $\psi''_J$. By the definitions of $\ell_0,\ell_1,\ell_2$ and $\mu_0$, we clearly have, 
$$
\la \psi''_J\ell_0,\bar\varphi\ra \ge\la\ell_{2,J},\bar\varphi\ra -\la \ell_{1,J},\bar\varphi\ra -\int_{U_T}\psi_J''\bar \varphi\,d\mu_0(x)\,dt.
$$ 
On the other hand, by \eqref{e1.8E}, making use again of the $\Om'$-boundary layer sequence $\z'_\d(x')$, and of the sequence $\xi_h(t)$ introduced above,  we easily get
$$
\la\psi''_0\ell_0,\z'_\d\xi_h \bar\varphi\ra \ge -C\|\bar\varphi\|_\infty,
$$
from which it follows, as $h,\d\to0$, 
$$
\la\psi''_0\ell_0, \bar\varphi\ra \ge -C\|\bar\varphi\|_\infty,
$$
for some constant $C>0$, depending only on  $k,T,\Om,\bff, \|u\|_\infty, \|\psi_0'',\nabla\psi_0''\|_\infty$, uniformly bounded for $k$ in bounded intervals. Putting together these facts, we obtain as above, for all $\bar\varphi\in C^\infty(\bar U_T)$,
$$
|\la \ell_0,\bar\varphi\ra|\le C\|\bar\varphi\|_\infty,
$$
for some $C>0$ depending on  $k,T,\Om,\bff,a_0, \|u\|_\infty,\|\nabla_{x''}b(u)\|_{L^2},  \|\psi_J'',\nabla\psi_J''\|_\infty$, $J=0,\cdots,N$,  uniformly bounded for $k$ in bounded intervals, which implies the assertion for $\ell_0$.  

 \end{proof}

 \begin{remark}\label{R:1.2} Upon relabeling, reorienting  and translating the coordinates $x''$,   the hyper-surface $\po\Om''\cap B''$, in $\R^{d''}$, is the graph of a Lipschitz function,
$\bar y'' \mapsto \gamma(\bar y'')$. Let, $V_{\gamma}$ denote the hypograph of $\gamma$ in $\R^{d''}$, i.e., $y''\in\R^{d''}$ such that $y_d<\gamma(\bar y'')$.  Since in \eqref{e1.11} the $x''$ coordinates are  localized on $B''$, we may use, instead of a $\Om''$-boundary layer sequence 
$\z_\d''$, a $V_{\gamma}$-level set boundary layer sequence, still denoted $\z''_\d$,  obtained from the level set function associated with the trivial deformation for  $\po V_\gamma$,  $\Psi: [0,1]\X \po V_{\gamma}\to \bar V_{\gamma}$, given by
$\Psi(s,(\bar y'',\gamma(\bar y'')))=(\bar y'', \gamma''(\bar y'')-s)$, where $V_\gamma$ denotes the hypograph of $\gamma:\R^{d''-1}\to\R$.  We call, by abuse of nomenclature, such a boundary layer  sequence a {\em $\Om''$-canonical local boundary layer sequence}. 
\end{remark}

The following very elementary lemma will be used in the proof of the existence of an entropy solution in Section~\ref{S:4}, and  this application demonstrates how advantageous is the use of level set boundary layer sequences (see \eqref{e1.R1}), in connection with \eqref{e1.R1'}; this fact is also clear in the proof of the uniqueness in Section~\ref{S:5}. 
 
 \begin{lemma}\label{L:1.2} Let $\Bbf^*(u,v,w):=\Bbf(u,v)+\Bbf(u,w)-\Bbf(w,v)$. Then, given any $\xi\in\R^d$, $\xi^\top \Bbf^*(u,v,w)\xi \ge 0$, for any $u,v,w\in\R$.
 \end{lemma}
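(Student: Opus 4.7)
The plan is to express $\Bbf^*$ as an integral in a nonlinearity variable $s\in\R$ of the matrix $B'(s)=(b_{ij}'(s))$ against an indicator combination, and to reduce the positive semidefiniteness of $\Bbf^*(u,v,w)$ to an a.e.\ pointwise inequality between three scalar interval indicators.

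The first step is the integral representation
$$\Bbf(u,v)_{ij}=\sgn(u-v)\int_v^u b_{ij}'(s)\,ds=\int_{\R}\chi_{I(u,v)}(s)\,b_{ij}'(s)\,ds,$$
where $I(u,v):=(\min(u,v),\max(u,v))$; the two cases $u>v$ and $u<v$ yield the same identity by the fundamental theorem of calculus. Contracting against $\xi\in\R^d$ and setting $Q(s):=\sum_{i,j=1}^{d}b_{ij}'(s)\xi_i\xi_j$, hypothesis \eqref{e1.B'} gives $Q(s)\ge b'(s)^2|\xi''|^2\ge 0$ for every $s\in\R$, and therefore
$$\xi^\top\Bbf(u,v)\xi=\int_{\R}\chi_{I(u,v)}(s)\,Q(s)\,ds.$$
Summing the corresponding expressions for $(u,v)$, $(u,w)$, $(w,v)$ with signs $+,+,-$ yields
$$\xi^\top\Bbf^*(u,v,w)\xi=\int_{\R}\bigl[\chi_{I(u,v)}(s)+\chi_{I(u,w)}(s)-\chi_{I(w,v)}(s)\bigr]\,Q(s)\,ds.$$

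What remains is the scalar pointwise claim that for every $s\ne u$ one has $\chi_{I(u,v)}(s)+\chi_{I(u,w)}(s)-\chi_{I(w,v)}(s)\ge 0$. If $s\notin I(w,v)$ this is automatic; otherwise $s$ lies strictly between $v$ and $w$, and any such $s$ with $s\ne u$ must lie in $I(u,v)$ (when $s$ is strictly between $v$ and $u$) or in $I(u,w)$ (when $s$ is strictly between $u$ and $w$), so the bracket is $0$ or $1$. Since $\{s=u\}$ has Lebesgue measure zero and $Q$ is locally bounded, the nonnegativity of the integral follows, proving the lemma.

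There is no genuine obstacle here; the statement is essentially the matrix-valued avatar of the scalar triangle inequality $A(u,v,w)\ge 0$, and everything reduces, via the fundamental theorem of calculus, to the fact that $B'(s)$ is pointwise positive semidefinite. The only minor subtlety is the failure of the indicator inequality at the single point $s=u$, which is invisible under $ds$-integration.
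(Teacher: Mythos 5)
Your argument is correct. You expand each $\Bbf(\cdot,\cdot)$ via the fundamental theorem of calculus as an integral of $B'(s)$ over an interval indicator, contract with $\xi$ to get a nonnegative integrand $Q(s)=\xi^\top B'(s)\xi$, and reduce the claim to the a.e.\ pointwise inequality $\chi_{I(u,v)}+\chi_{I(u,w)}-\chi_{I(w,v)}\ge 0$ for $s\ne u$, handling the single bad point $s=u$ by its Lebesgue-measure-zero. All of these steps are sound.

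The paper takes a genuinely shorter route. It introduces $g_\xi(u):=\xi^\top B(u)\xi$, observes from \eqref{e1.B'} that $g_\xi'\ge 0$, hence $g_\xi$ is nondecreasing, so $\sgn(u-v)\bigl(g_\xi(u)-g_\xi(v)\bigr)=|g_\xi(u)-g_\xi(v)|$ (this holds even when $g_\xi$ is not strictly monotone, since equality forces both sides to vanish), and then simply rewrites
$$\xi^\top\Bbf^*(u,v,w)\xi=|g_\xi(u)-g_\xi(v)|+|g_\xi(u)-g_\xi(w)|-|g_\xi(w)-g_\xi(v)|\ge 0$$
by the triangle inequality in $\R$. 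This avoids the integral representation and, with it, any bookkeeping about the exceptional point $s=u$. Your indicator inequality is in effect the ``infinitesimal'' version of the same triangle inequality, integrated against $Q(s)\,ds$; the two proofs are therefore morally equivalent, but the paper's identity-plus-triangle-inequality argument is more economical and makes the role of the monotonicity of $u\mapsto\xi^\top B(u)\xi$ immediate.
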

 
 \begin{proof} This follows immediately from the fact that the function $g_\xi(u)= \xi^\top B(u)\xi $ verifies $g_\xi'(u)\ge0$, by \eqref{e1.B'}, and, clearly, 
 \begin{multline*}
 \xi^\top \Bbf^*(u,v,w)\xi\\ =\sgn(u-v)(g_\xi(u)-g_\xi(v))+\sgn(u-w)(g_\xi(u)-g_\xi(w))-\sgn(w-v)(g_\xi(w)-g_\xi(v))\\
 =|g_\xi(u)-g_\xi(v)|+|g_\xi(u)-g_\xi(w)|-|g_\xi(w)-g_\xi(v)|\ge 0.
 \end{multline*}
 \end{proof}

 Before concluding this section, giving a brief description of the other sections in this paper, we would like to say some words about the possibility of dropping condition \eqref{e1.nondeg}. 
 Dropping  this condition seems to be a difficult problem due to the Neumann condition on the boundary $\Gamma_T'$.  The point is that this Neumann boundary condition demands the use of the strong trace property for the proof of the uniqueness of entropy solutions. This seems to preclude the use of the approach based on the uniqueness of  measure-valued solutions, as in \cite{Sz, BS},  in the purely hyperbolic case, and \cite{MV}, in the degenerate parabolic case,  both of which deal exclusively with Dirichlet boundary conditions. 
 This approach, when applicable,  gives not only uniqueness, but also existence. It has not been extended to the case of Neumann boundary conditions, the major difficulty being related to the lack of the strong trace property.  Therefore,  for the existence of solutions we need to rely on more restrictive compactness  approaches such as some  extension of the averaging lemma, as we do here, and this explains the imposition of  the non-degeneracy condition \eqref{e1.nondeg}.

 We finish this section by giving a description of the following sections. In Section~\ref{S:2}, we recall the basic facts about divergence-measure fields that will be used in this paper. 
 In Section~\ref{S:3}, we prove Theorem~\ref{T:1.2}, which is a fundamental tool for the proof of the uniqueness of entropy solutions. In Section~\ref{S:4}, we prove the existence of entropy solutions to problem \eqref{e1.1}--\eqref{e1.4}, making use of a variation of averaging lemma stated and proved in the previous section. Finally, in Section~\ref{S:5}, we conclude the proof of Theorem~\ref{T:1.1}, proving the uniqueness of entropy solutions to problem \eqref{e1.1}--\eqref{e1.4}.

\section{Divergence-Measure Fields}\label{S:2}
  
  In this section we recall some facts in the theory of divergence-measure fields that will be used in this paper.
    
 \begin{definition}\label{D:1} Let $U\subset\R^N$ be open. 
For $F\in L^p(U;\R^N)$, $1\le p\le\infty$, or $F\in\M(U;\R^N)$,
set
\begin{equation}\label{e1}
|\div F|(U):=\sup\{\,\int_U\nabla\vphi\cdot F\,\,: \,\,\vphi\in C_0^1(U),
\,\, |\vphi(x)|\le1,\ x\in U\,\}.
\end{equation}
For $1\le p\le\infty$, we say that $F$ is 
an $L^p$-divergence-measure field over $U$, i.e., $F\in\DM^p(U)$,
if $F\in L^p(U;\R^N)$ and
\begin{equation}\label{norm1}
\|F\|_{\DM^p(U)}:=\|F\|_{L^p(U;\R^N)}+|\div F|(U)<\infty.
\end{equation}
We say that $F$ is an extended divergence-measure field over $U$,
i.e., $F\in\DM^{ext}(U)$, if $F\in\M(U;\R^N)$ and
\begin{equation}\label{norm2}
\|F\|_{\DM^{ext}(U)}:=|F|(U)+|\div F|(U)<\infty.
\end{equation}
If $F\in \DM^*(U)$ for any open set $U\Subset\R^N$, 
then we say $F\in \DM^*_{loc}(\R^N)$.
\end{definition}

Here, we will be concerned only with bounded domains $U\subset\R^N$, and fields that are $L^p$ vector functions, so it will suffice to consider divergence-measure fields in 
$\DM^1(U)$.  We recall the Gauss-Green formula for general $\DM^1$-fields, first proved in \cite{CF2,CF3} and extended by Silhavy in \cite{S1}.

\begin{theorem}[Chen \& Frid \cite{CF2,CF3}, Silhav\'y \cite{S1}] \label{T:3}  If $F\in\DM^1(U)$ then there exists a linear functional $F\cdot\nu: \operatorname{Lip}(\po U)\to\R$ such that
\begin{equation}\label{e8}
F\cdot\nu(g|\po U)=\int_U \nabla g\cdot F+\int_U g\,\div F,
\end{equation}
for every $g\in \operatorname{Lip}(\R^N)\cap L^\infty(\R^N)$.  
Moreover,
\begin{equation}\label{e8'}
|F\cdot\nu(h)|\le |F|_{\DM(U)}|h|_{\Lip(\po U)},
\end{equation}
for all $h\in\Lip(\po U)$, 
 where we use the notation 
 $$
 |g|_{\Lip(C)}:= \sup_{x\in C}|g(x)|+\Lip_C(g).
 $$
 
Moreover,  let $m\,:\, \R^N\to\R$ be a nonnegative Lipschitz function with $\supp m\subset \bar U$ which is strictly positive on $U$, and for each $\ve>0$ let $L_\ve=\{ x\in U\,:\,0<m(x)<\ve\}$. Then:
\begin{enumerate}

\item[(i)] {\rm({\em cf.} \cite{CF2,CF3} and \cite{S1})} If $g\in \Lip(\R^N)\cap L^\infty(\R^N)$, we have
\begin{equation}\label{e11}
   F\cdot\nu(g|\partial U)=-\lim_{\ve\to0}\ve^{-1}\int_{L_\ve}g \,\nabla m\cdot F\,dx;
\end{equation}

\item[(ii)] {\rm({\em cf.} \cite{S1})} If 
\begin{equation}\label{e12}
\liminf_{\ve\to0} \ve^{-1} \int_{L_\ve}|\nabla m\cdot F|\,dx<\infty,
\end{equation}
then $F\cdot\nu$ is a measure over $\partial U$. 
\end{enumerate} 
A typical example of  such  $m$ is provided by $m(x)=\dist (x,\partial U)$, for $x\in U$, and $m(x)=0$, for $x\in\R^N\setminus U$. 
   
\end{theorem}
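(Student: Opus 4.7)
The plan is to \emph{define} $F\cdot\nu$ on $\Lip(\po U)$ by a Lipschitz extension. Given $g\in\Lip(\po U)$, pick any bounded Lipschitz extension $\tilde g\in\Lip(\R^N)\cap L^\infty(\R^N)$ (for instance, the McShane--Whitney extension followed by truncation, which preserves both $\sup_{\po U}|g|$ and $\Lip_{\po U}(g)$), and set
\begin{equation*}
F\cdot\nu(g) := \int_U\nabla\tilde g\cdot F\,dx + \int_U\tilde g\,d(\div F).
\end{equation*}
The crucial point is \emph{independence of the extension}. If $\tilde g_1,\tilde g_2$ are two such extensions, then $\phi:=\tilde g_1-\tilde g_2\in\Lip(\R^N)\cap L^\infty(\R^N)$ vanishes on $\po U$. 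Taking $m(x):=\dist(x,\po U)$ and $\chi_k(x):=\min\{1,k\,m(x)\}$, the functions $\phi\chi_k$ are Lipschitz with compact support in $U$, hence via mollification the distributional definition of $\div F$ yields
\begin{equation*}
\int_U\nabla(\phi\chi_k)\cdot F\,dx + \int_U\phi\chi_k\,d(\div F)=0.
\end{equation*}
Using $|\phi(x)|\le\Lip(\phi)\,m(x)$ (because $\phi$ is Lipschitz and vanishes on $\po U$) and $|\nabla\chi_k|\le k\,\mathbf{1}_{\{m<1/k\}}$, the cross-term $\int_U\phi\,\nabla\chi_k\cdot F\,dx$ is bounded by $\Lip(\phi)\int_{\{m<1/k\}}|F|\,dx\to 0$, while the remaining terms pass to the limit by dominated convergence (with $F\in L^1$ and $\div F\in\M(U)$). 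Thus $F\cdot\nu$ is well defined. With the McShane--Whitney choice, the estimate
$|F\cdot\nu(g)|\le \|F\|_{L^1}\Lip_{\po U}(g)+|\div F|(U)\sup_{\po U}|g|$
yields \eqref{e8'}, and the Gauss--Green identity \eqref{e8} is built into the definition.

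For part (i), given $g\in\Lip(\R^N)\cap L^\infty(\R^N)$ and a general Lipschitz $m$ as in the theorem, I use the specific extension
\begin{equation*}
\tilde g_\ve(x) := g(x)\bigl(1-\ve^{-1}m(x)\bigr)_+,
\end{equation*}
which belongs to $\Lip(\R^N)\cap L^\infty(\R^N)$, agrees with $g$ on $\po U$ (where $m=0$), and satisfies $\nabla\tilde g_\ve = (1-\ve^{-1}m)_+\nabla g - \ve^{-1}g\,\nabla m\,\mathbf{1}_{L_\ve}$ a.e.\ on $U$. Plugging into the defining formula gives
\begin{equation*}
F\cdot\nu(g) = -\ve^{-1}\!\!\int_{L_\ve}\!g\,\nabla m\cdot F\,dx + \int_{L_\ve}\!(1-\ve^{-1}m)\nabla g\cdot F\,dx + \int_{L_\ve}\!\tilde g_\ve\,d(\div F).
\end{equation*}
Since $m>0$ on $U$ and $m$ is continuous with $\supp m\subset\bar U$, the sets $L_\ve$ satisfy $|L_\ve|\to 0$ as $\ve\to 0$, so the middle term vanishes because $F\in L^1$; the third term vanishes too because $\div F$ is a finite Radon measure on the \emph{open} set $U$, $\mathbf{1}_{L_\ve}\to 0$ pointwise in $U$, and $|\tilde g_\ve|\le\|g\|_\infty$, so dominated convergence for measures applies. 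This proves \eqref{e11}.

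Finally, (ii) is a straightforward consequence of (i): the hypothesis \eqref{e12} together with \eqref{e11} gives $|F\cdot\nu(g)|\le\sup_{\po U}|g|\cdot\liminf_{\ve\to 0}\ve^{-1}\!\int_{L_\ve}|\nabla m\cdot F|\,dx$ for every $g\in\Lip(\R^N)\cap L^\infty$, and restrictions to $\po U$ of such $g$ are uniformly dense in $C(\po U)$ (by the Tietze--McShane extension in the opposite direction), so $F\cdot\nu$ extends uniquely to a bounded linear functional on $C(\po U)$, hence a finite Radon measure on $\po U$ by Riesz representation. The main obstacle is the well-definedness/independence argument: one must exploit the precise linear decay $|\phi(x)|\lesssim\dist(x,\po U)$ to absorb the cut-off error against $F\in L^1$ without any extra regularity on $F$; the remaining items are direct computations once that step is in hand.
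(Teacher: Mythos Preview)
The paper does not give its own proof of this theorem; it is stated with attribution to Chen--Frid \cite{CF2,CF3} and \v{S}ilhav\'y \cite{S1} and used as a tool. Your argument follows the standard route of those references (define the trace via a Lipschitz extension, prove independence of the extension by a cutoff argument exploiting the linear decay $|\phi|\le \Lip(\phi)\,\dist(\cdot,\partial U)$, then obtain \eqref{e11} by plugging in the specific extension $g(1-\ve^{-1}m)_+$), and the overall strategy is correct.

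There is one genuine slip in the independence step. With $\chi_k(x)=\min\{1,k\,m(x)\}$ and $m=\dist(\cdot,\partial U)$, the function $\chi_k$ is strictly positive throughout $U$, so $\phi\chi_k$ does \emph{not} have compact support in $U$; you therefore cannot invoke the distributional identity for it directly via mollification. The fix is immediate: replace $\chi_k$ by a cutoff that vanishes in a full neighborhood of $\partial U$, for instance
\[
\tilde\chi_k(x)=\min\{1,(k\,m(x)-1)_+\},
\]
which satisfies $\tilde\chi_k\equiv 0$ on $\{m\le 1/k\}$ and $\tilde\chi_k\equiv 1$ on $\{m\ge 2/k\}$. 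Then (adding a spatial cutoff at infinity if $U$ is unbounded) $\phi\tilde\chi_k\in\Lip_c(U)$, the mollification argument applies, and your key estimate survives unchanged since $|\phi\,\nabla\tilde\chi_k|\le \Lip(\phi)\,m\cdot k\,\mathbf 1_{\{1/k<m<2/k\}}\le 2\Lip(\phi)\,\mathbf 1_{\{m<2/k\}}$, which goes to zero against $|F|\in L^1$. With this correction, your proof of well-definedness, of \eqref{e8'}, of \eqref{e11}, and of (ii) via Riesz representation is complete and matches the arguments in the cited references.
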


\begin{definition}\label{D:2.1}  Let $U\subset\R^{N+1}$ be  an open set. We say that  $\po U$ is a {\em   Lipschitz deformable boundary}  if the following hold:

\begin{enumerate} 

\item[(i)] For each $x\in\po U$, there exist $r>0$ and a Lipschitz mapping $\g :\R^{N}\to\R$  such that, upon relabeling, reorienting and translation,    
$$
U\cap Q(x,r) = \{\,y\in\R^{N+1}\,:\, \g(y_1,\cdots,y_{N})<y_0\,\}\cap Q(x,r),
$$
where $Q(x,r)=\{\,y\in\R^{N+1}\,:\, |y_i-x_i|\le r,\ i=1,\cdots,N+1\,\}$. We denote by $\hat \g$ the map $ \hat y\mapsto (\g(\hat y),\hat y)$, $\hat y=(y_1,\cdots,y_{N})$.

\item[(ii)]  There exists a map $\Psi:[0,1]\X\po U\to \bar U$ such that $\Psi$ is a bi-Lipschitz homeomorphism over its image and $\Psi(0,x)=x$, for all $x\in\po U$.
For $s\in[0,1]$, we denote by $\Psi_s$ the mapping from $\po U$ to $\bar U$ given by $\Psi_s(x)= \Psi(s,x)$, and set $\po U_s:=\Psi_s(\po U)$. We call such map a Lipschitz deformation for $\po U$. 

\end{enumerate}
The {\em level set function associated with the deformation} $\Psi$ is the function $h:\bar U\to [0,1]$, defined by $h(x)=s$, if $x\in\po U_s$, $0\le s\le 1$, and $h(x)=1$, if $x\in U\setminus\Psi((0,1)\X\po U)$.  
 \end{definition}
 
 \begin{definition} \label{D:2.2}  Let $U\subset\R^{N+1}$ be an open set with a  Lipschitz deformable boundary $\po U$, and $\Psi:[0,1]\X\po U \to \bar \Om$ a Lipschitz deformation.
 \begin{enumerate}
 \item  The Lipschitz deformation is said to be {\em regular over} $\Gamma\subset\po U$,  if $D\Psi_s\to \operatorname{Id}$, as $s\to0$, in $L^1(\Gamma, \H^{N})$; 
 \item The Lipschitz deformation is said to be {\em strongly regular over} $\Gamma\subset\po U$ if it is regular over $\Gamma$ and  $J[\Psi_s]\to 1$ in $\Lip(\Gamma)$, as $s\to0$, that is,  given any Lipschitz diffeomorphism $\hat \g: \Om\subset\R^{N}\to \Gamma$, we have  $D\Psi_s\circ\hat\g\to D\hat\g$ in $L^1(\Om)$, as $s\to0$, and  $J[\Psi_s\circ\hat\g]/ J[\hat\g]\to 1$ in $\Lip(\Om)$, as $s\to0$ . Here, for a Lipschitz function $\a:\R^k\to\R^m$ we denote by $J[\a]$ the Jacobian of the map $\a$ (see, e.g., \cite{Fe}). Observe that we do not need to require more regularity on $\Gamma$, it suffices that $J[\Psi_s]\in\Lip(\Gamma)$.  
 \end{enumerate}
  \end{definition}

The following two results have been proved in \cite{Fr}. We include their proofs here for the convenience of the reader. 

\begin{theorem}[{\em cf.} \cite{Fr}]\label{T:2.3} Let $U\subset\R^{N+1}$ be a bounded open set with a deformable Lipschitz boundary and  $F\in\DM^1(U)$. Let $\Psi:\po U\X[0,1]\to\bar U$ be a Lipschitz deformation of $\po U$. Then, for almost all $s\in[0,1]$, and all $\phi\in C_0^\infty(\R^{N+1})$, 
\begin{equation}\label{e2.2'}
\int_{U_s}\phi\,\div F = \int_{\po U_s}\phi(\om) F(\om)\cdot\nu_s(\om)\,d\H^{N}(\om)-\int_{U_s}F(x)\cdot\nabla\phi(x)\,dx,
\end{equation}
where $\nu_s$ is the unit outward normal field defined $\H^{N}$-almost everywhere in $\po U_s$, and $U_s$ is the open subset of $U$ bounded by $\po U_s$.
\end{theorem}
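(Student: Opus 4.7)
The plan is to combine the Gauss--Green identity \eqref{e8} of Theorem~\ref{T:3} with the coarea formula applied to the level set function $h:\bar U\to[0,1]$ of the deformation $\Psi$ from Definition~\ref{D:2.1}. First, fix $\phi\in C_0^\infty(\R^{N+1})$ and, for each $s\in(0,1)$ and $\ve>0$, set $\chi_{\ve,s}(t)=\min\{1,\ve^{-1}(t-s)_+\}$, so $\chi_{\ve,s}$ is piecewise linear, vanishing for $t\le s$ and equal to $1$ for $t\ge s+\ve$. Since $h$ is Lipschitz on $\bar U$ with $h=0$ on $\po U$, the function $g:=\phi\cdot\chi_{\ve,s}(h)$, extended by $0$ outside $\bar U$, lies in $\Lip(\R^{N+1})\cap L^\infty(\R^{N+1})$ and vanishes on $\po U$; the Gauss--Green formula \eqref{e8} applied to $F\in\DM^1(U)$ then gives
$$
0=\int_U\chi_{\ve,s}(h)\,F\cdot\nabla\phi\,dx+\frac1\ve\int_{\{s<h<s+\ve\}}\phi\,F\cdot\nabla h\,dx+\int_U\phi\,\chi_{\ve,s}(h)\,d(\div F).
$$

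Next I pass $\ve\to0$ term by term. By dominated convergence the first term tends to $\int_{U_s}F\cdot\nabla\phi\,dx$ for every $s\in(0,1)$. The set of $s$ with $|\div F|(\po U_s)>0$ is at most countable because $|\div F|$ is a finite measure on $U$, so off this set the third term tends to $\int_{U_s}\phi\,d(\div F)$. For the middle term, the coarea formula applied to $h$ yields
$$
\frac1\ve\int_{\{s<h<s+\ve\}}\phi\,F\cdot\nabla h\,dx=\frac1\ve\int_s^{s+\ve}\Bigl(\int_{h^{-1}(\tau)}\frac{\phi\,F\cdot\nabla h}{|\nabla h|}\,d\H^{N}\Bigr)d\tau,
$$
and since $\phi\,F\cdot\nabla h\in L^1(U)$, the inner integral is an $L^1(0,1)$ function of $\tau$, so Lebesgue differentiation identifies its $\ve\to0$ limit at a.e.\ $s$. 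Assembling the three limits at any $s$ in the full measure set where they hold simultaneously, and rearranging, yields \eqref{e2.2'} once $\nabla h/|\nabla h|$ is identified with $-\nu_s$ $\H^{N}$-a.e.\ on $\po U_s=h^{-1}(s)$.

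The main obstacle is precisely this last identification. Because $\Psi$ is bi-Lipschitz and $\Psi(0,\cdot)=\operatorname{Id}$, the map $\Psi_s$ is a bi-Lipschitz homeomorphism of $\po U$ onto $\po U_s$, so $\po U_s$ carries an $\H^{N}$-a.e.\ defined outward unit normal $\nu_s$; combining Rademacher's theorem with the fact that $h\equiv s$ on $\po U_s$ shows that $\nabla h$ is normal to $\po U_s$ $\H^{N}$-a.e., and its sign (pointing into $U_s$, i.e.\ away from $\po U$) forces $\nabla h/|\nabla h|=-\nu_s$. Apart from this geometric point, everything is a soft combination of coarea, Lebesgue differentiation, and the Gauss--Green identity already at hand on $U$.
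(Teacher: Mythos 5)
Your proof follows essentially the same strategy as the paper's: cut $\phi$ against a piecewise-linear ramp composed with the level-set function $h$, apply the Gauss--Green identity \eqref{e8}, convert the $F\cdot\nabla h$ term via coarea, and let the ramp width shrink using Lebesgue differentiation, identifying $\nabla h/|\nabla h|$ with $-\nu_s$ along the way (the paper uses a symmetric ramp centered at $s_0$, you a one-sided one; that is cosmetic, and you are in fact more explicit than the paper about both the normal identification and the need to avoid the at most countably many $s$ with $|\div F|(\po U_s)>0$). The one thing you leave implicit, which the paper spells out, is that to obtain a single full-measure set of $s$ valid for \emph{all} $\phi\in C_0^\infty(\R^{N+1})$ one should first carry out the limit for $\phi$ in a countable dense family and then pass to general $\phi$ by density; as written, your exceptional null set of $s$ could a priori depend on $\phi$.
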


\begin{proof} For $\phi\in C_0^\infty(\R^{N+1})$, let  
$$
\z_\phi(s)=\int_{\po U_s} \phi(\om) F(\om)\cdot\nu_s (\om)\,d\H^{N}(\om), \quad s\in[0,1],
$$ 
where $\nu_s$ is as in the statement. Let $s_0\in[0,1]$ be a Lebesgue point for $\z_\phi$, for $\phi$ in a countable dense set in $C_0^\infty(\R^{N+1})$.  For $\d>0$ sufficiently small,
let $g_\d:\R\to\R$ be defined as
$$
g_\d(s)=\begin{cases} 0, & s<s_0-\d,\\ \frac{s-s_0+\d}{2\d}, &s_0-\d\le s\le s_0+\d,\\ 1, &s>s_0+\d.\end{cases}
$$
Set $\psi_\d=g_\d\circ h\, \phi$, where $h$ is the level set function associated to the Lipschitz deformation $\Psi$. By the Gauss-Green formula, we have
\begin{align*}
0=&\int_{U} F\cdot\nabla\psi_\d\,dx+\int_{U}\psi_\d\, \div F\\
&= \int_{U}\phi g_\d'(h(x)) F\cdot\nabla h\,dx+\int_{U} g_\d(h(x)) F\cdot\nabla\phi\,dx+\int_{U}\psi_\d\,\div F ,
\end{align*}
which gives, by the coarea formula,
$$
0=-\frac1{2\d}\int_{s_0-\d}^{s_0+\d}\int_{\po U_s}\phi F\cdot\nu_{s}\,d\H^{N}(\om)\,ds+\int_{U} g_\d(h(x)) F\cdot\nabla\phi\,dx+\int_{U}\psi_\d\,\div F .
$$
Letting $\d\to0$, we obtain \eqref{e2.2'} for $s=s_0$, where $s_0$ is an arbitrary Lebesgue point of $\z_\phi$, for $\phi$ in a countable dense subset of $C_0^\infty(\R^{N+1})$,
and, so, \eqref{e2.2'} holds for almost all $s\in[0,1]$ as it was to be proved.

\end{proof}

\begin{theorem}[{\em cf.} \cite{Fr}] \label{T:2.3'} Let $F\in\DM^1(U)$, where $U\subset\R^{N+1}$ is a bounded open set with a Lipschitz deformable boundary and Lipschitz   deformation $\Psi:[0,1]\X\po U\to\bar U$.
Denoting by $F\cdot\nu|_{\po U}$ the continuous linear functional $\Lip(\po U)\to\R$ given by the normal trace of $F$ at $\po U$, we have the formula
\begin{equation}\label{e2.3'}
F\cdot\nu|_{\po U}=\esslim_{s\to0} F\circ\Psi_s(\cdot)\cdot\nu_s(\Psi_s(\cdot))J[\Psi_s] ,
\end{equation}
with equality in the sense of $(\Lip(\po U))^*$, where on the right-hand side the  functionals are given by ordinary functions in $L^1(\po U)$. In particular, if $\Psi$ is strongly regular  over $\Gamma\subset\po U$   then, for all $\varphi\in \Lip(\po U)$ with $\supp\varphi\subset\Gamma$, we have
 \begin{equation}\label{e2.3''}
\la F\cdot\nu|_{\po U},\varphi\ra=\esslim_{s\to0} \int_{\Gamma} F\circ\Psi_s(\om)\cdot\nu_s(\Psi_s(\om))\varphi(\om)\,d\H^{N}(\om).
\end{equation}
\end{theorem}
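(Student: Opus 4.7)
The plan is to base the proof on Theorem~\ref{T:2.3} applied to the deformed subdomains $U_s$ and then pass to the essential limit $s\to 0^+$ via the Gauss-Green formula of Theorem~\ref{T:3}. The key identity from Theorem~\ref{T:2.3}, which extends by density to test functions $\phi\in\Lip(\R^{N+1})\cap L^\infty$, reads
$$\int_{\po U_s}\phi\, F\cdot\nu_s\,d\H^N=\int_{U_s}F\cdot\nabla\phi\,dx+\int_{U_s}\phi\,\div F$$
for a.e.\ $s\in[0,1]$. Since $\chi_{U_s}\to\chi_U$ pointwise a.e.\ as $s\to 0$, dominated convergence (with dominants $|F|\,|\nabla\phi|\in L^1(U)$ and $|\phi|\,d|\div F|$) sends the right-hand side to $\int_U F\cdot\nabla\phi+\int_U\phi\,\div F=\la F\cdot\nu|_{\po U},\phi|_{\po U}\ra$ by Theorem~\ref{T:3}.

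Given $\varphi\in\Lip(\po U)$, I would take $\phi=\hat\varphi$, the fibre-constant extension $\hat\varphi(\Psi(r,\omega)):=\varphi(\omega)$ on $\Psi([0,1]\X\po U)$, extended to $\R^{N+1}$ by McShane; this is Lipschitz because $\Psi$ is bi-Lipschitz, with $\|\hat\varphi\|_{\Lip(\R^{N+1})}\le C\|\varphi\|_{\Lip(\po U)}$ for a constant $C$ depending only on the bi-Lipschitz constant of $\Psi$. Since $\hat\varphi\circ\Psi_s\equiv\varphi$ on $\po U$, the area formula transforms $\int_{\po U_s}\hat\varphi\, F\cdot\nu_s\,d\H^N$ into $\int_{\po U}\varphi(\omega) F(\Psi_s(\omega))\cdot\nu_s(\Psi_s(\omega))J[\Psi_s](\omega)\,d\H^N(\omega)$, and combining with the first paragraph yields \eqref{e2.3'}. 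As a by-product, setting $T_s(\psi):=\int_{\po U}\psi\, F(\Psi_s)\cdot\nu_s(\Psi_s)\, J[\Psi_s]\,d\H^N$ for $\psi\in\Lip(\po U)$, the identity $T_s(\psi)=\int_{U_s}F\cdot\nabla\hat\psi+\int_{U_s}\hat\psi\,\div F$ combined with $\|\hat\psi\|_\Lip\le C\|\psi\|_\Lip$ furnishes the uniform operator bound $|T_s(\psi)|\le C\|\psi\|_{\Lip(\po U)}\|F\|_{\DM^1(U)}$.

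For the strongly regular case \eqref{e2.3''} the Jacobian weight $J[\Psi_s]$ must be absorbed into the test function. I would set $\psi_s:=\varphi/J[\Psi_s]$ on $\Gamma$, extended by zero outside $\Gamma$; this is well-defined and Lipschitz on $\po U$ for sufficiently small $s$, because strong regularity forces $J[\Psi_s]\ge 1/2$ on $\Gamma$, and $\supp\varphi\subset\Gamma$ ensures the trivial extension remains Lipschitz across $\po\Gamma$. Strong regularity further provides $J[\Psi_s]\to 1$ in $\Lip(\Gamma)$, hence $\psi_s\to\varphi$ in $\Lip(\po U)$. Decomposing
$$T_s(\psi_s)-\la F\cdot\nu|_{\po U},\varphi\ra = T_s(\psi_s-\varphi)+\bigl[T_s(\varphi)-\la F\cdot\nu|_{\po U},\varphi\ra\bigr],$$
the first term is bounded by $C\|\psi_s-\varphi\|_{\Lip}\|F\|_{\DM^1}\to 0$ via the uniform estimate of the preceding paragraph, and the second term vanishes in essential limit by \eqref{e2.3'}. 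Since $T_s(\psi_s)=\int_\Gamma\varphi\, F(\Psi_s)\cdot\nu_s(\Psi_s)\,d\H^N$ by construction, this proves \eqref{e2.3''}. The main obstacle is obtaining the uniform $\Lip$-to-$\R$ bound on the family $\{T_s\}$; once delivered by the fibre-constant extension and Theorem~\ref{T:3}, everything else reduces to routine applications of the area formula, dominated convergence, and the $\Lip(\Gamma)$-convergence $J[\Psi_s]\to 1$ supplied by strong regularity.
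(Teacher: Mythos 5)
Your proposal is correct and follows essentially the same route as the paper: start from Theorem~\ref{T:2.3}, pass to the essential limit in $s$ via dominated convergence in the other two terms of \eqref{e2.2'}, pull the boundary integral back to $\po U$ by the area formula to get \eqref{e2.3'}, and then use the $\Lip(\Gamma)$-convergence $J[\Psi_s]\to 1$ supplied by strong regularity to remove the Jacobian weight. The only difference is presentational: you make explicit the fibre-constant (McShane) extension and the uniform operator bound $|T_s(\psi)|\le C\|\psi\|_{\Lip}\|F\|_{\DM^1}$ before decomposing $T_s(\psi_s)=T_s(\varphi)+T_s(\psi_s-\varphi)$, whereas the paper writes the equivalent decomposition $J[\Psi_s]=1+(J[\Psi_s]-1)$ directly inside the integral and invokes \eqref{e2.3'} for the error term; these are the same estimate arranged differently.
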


\begin{proof}

{}From Theorem~\ref{T:2.3} and the Gauss-Green formula \eqref{e8}, when $F\in\DM^1(\Om)$, it follows that, for any   $g\in\Lip(\R^{N+1})\cap L^\infty(\R^{N+1})$, we have the following formula for the normal trace functional $F\cdot\nu:\Lip(\po U)\to\R$,
\begin{equation}\label{e2.3'''}
\la F\cdot\nu,g|\po U\ra=\esslim_{s\to0}\int_{\po U_s} g F(\om)\cdot\nu(\om)\,d\H^{N}(\om),
\end{equation}
where the limit on the right-hand side exists by applying dominated convergence to the other two terms in \eqref{e2.2'}. Therefore, for any $\phi\in\Lip(\po U)$, we have
\begin{equation*}
\la F\cdot\nu,\phi \ra=\esslim_{s\to0}\int_{\po U_s} \phi\circ\Psi_s^{-1}(\om) F(\om)\cdot\nu_s(\om)\,d\H^{N}(\om),
\end{equation*}
or, by using the area formula,
\begin{align*}
\la F\cdot\nu,\phi \ra&=\esslim_{s\to0}\int_{\po U} \phi(\om) F\circ\Psi_s(\om)\cdot\nu_s(\Psi_s(\om)) J[\Psi_s]\,d\H^{N}(\om)\\
                                   &=\esslim_{s\to0}\int_{\po U} \phi(\om) F\circ\Psi_s(\om)\cdot\nu(\Psi_s(\om)) \,d\H^{N}(\om)\\
                                   &+\esslim_{s\to0}\int_{\po U} \phi(\om)\frac{(J[\Psi_s]-1)}{J[\Psi_s]} F\circ\Psi_s(\om)\cdot\nu(\Psi_s(\om))J[\Psi_s] \,d\H^{N}(\om)\\
                                   &=\esslim_{s\to0}\int_{\po U} \phi(\om) F\circ\Psi_s(\om)\cdot\nu(\Psi_s(\om)) \,d\H^{N}(\om),
\end{align*}  
where the first equality is \eqref{e2.3'}, and the last equality holds, through \eqref{e2.3'}, if $\Psi$ is a strongly regular Lipschitz deformation in the sense of Definition~\ref{D:2.2}, which completes the proof.  
\end{proof}

   \medskip
\begin{remark}\label{R:2.1}
Let $U$ be endowed with a Lipschitz boundary  and $\Gamma\subset\po U$ be an open  piece of $\po U$. Let $\om\in\Gamma$ and $\CR_\om:\R^{N+1}\to\R^{N+1}$ be a rigid motion in $\R^{N+1}$ such that, for some Lipschitz function  $\g:\R^{N}\to\R$, denoting $y=\CR_\om x$, $\hat y=(y_1,\cdots,y_{N})$,  and defining $\hat\g(\hat y):=(\g(\hat y),\hat y)$,  we have  that $\hat\g(\Om)=\Gamma$, for some open set  $\Om\subset\R^{N}$. Let also $\tilde U=\{y\in\R^{N+1}\,:\, \g(\hat y)<y_0\,\}$ and suppose $U\cap \tilde U\ne\emptyset$. If $F\in\DM^1(U)\cap\DM^1(\tilde U)$, it is immediate to check, using the Gauss-Green formula, that $F\cdot\nu|_{\po U}$ and $F\cdot\nu|_{\po\tilde U}$ coincide over $\Gamma$, that is,
$$
\la F\cdot\nu|_{\po U},\varphi\ra=\la F\cdot\nu|_{\po\tilde U},\varphi\ra,
$$
for all $\varphi\in \Lip_c(\Gamma)$, where $\Lip_c(\Gamma)$ denotes the subspace of  functions in $\Lip(\Gamma)$ with compact support in $\Gamma$.  Also recall that, to define 
$F\cdot\nu|_{\po U}$,    it is not necessary that $\po U$ be Lipschitz deformable.  In such cases, restricted to functions $\varphi\in \Lip_c(\Gamma)$ we will always view $\la F\cdot \nu|_{\po U},\varphi\ra$ as obtained, after translation, relabeling and reorienting coordinates, 
 through \eqref{e2.3''} by using the {\em canonical  deformation} of $\po \tilde U$ , defined as  $(s, (\gamma(\hat y),\hat y))\mapsto (\gamma(\hat y)+s, \hat y)$, evidently strongly regular over $\Gamma$, which is legitimate for $\tilde U$; we call that a {\em local canonical deformation of $\po U$}.

\end{remark}

\begin{remark}\label{R:2.2} Since the normal trace $F\cdot\nu|_{\po U}$, of a divergence measure field over an open set $U$, restricted to some open piece $\Gamma\subset \po U$, 
does not depend on $U$, but just on $\Gamma$, for the purpose of defining $F\cdot\nu$ over $\Gamma$, we may refer to a deformation  $\Psi: [0,1]\X\Gamma\to\bar U$, defined just on $[0,1]\X\Gamma$, which may be the restriction over $[0,1]\X\Gamma$ of a deformation $\tilde\Psi:[0,1]\X\po\tilde U\to\overline{\tilde U}$ such that $\Gamma\subset\po\tilde U$,
$U\cap\tilde U\ne\emptyset$ and $F$ may be extended somehow from $U\cap\tilde U$ to $\tilde U$ so as to be viewed as  a divergence-measure field over $\tilde U$.  
\end{remark}

\begin{remark}\label{R:2.3} If $\Om'=(\a_1,\b_1)\X\cdots\X(\a_{d'},\b_{d'})$ and we assume, without loss of generality, that the origin of $\R^{d'}$ is in the center of  $\Om'$, then $\Psi': \po\Om'\X[0,1]\to \overline{\Om'}$, given by $\Psi'(x,s)=(1-\ve s)x$, for any $0<\ve<1$, is a strongly regular deformation of $\po\Om'$ as it is trivial to check. More generally, let $\Om'$ be a {\em starlike polyhedric domain}, that is, a bounded domain whose boundary is piecewise flat and there is a point $x_0$ in its interior such that, for each point $x\in\po\Om'$, the segment $(1-\theta)x+\theta x_0$, $0<\theta <1$, is entirely contained in $\Om'$. Then again, identifying $x_0\equiv0$, for simplicity, $\Psi': \po\Om'\X[0,1]\to \overline{\Om'}$, given by $\Psi'(x,s)=(1-\ve s)x$,    for any $0<\ve<1$, is a strongly regular deformation of $\po\Om'$, which is also easy to check.  

\end{remark}

\section{Strong trace property}\label{S:3}

 In this section, we prove  a strong trace property for the entropy solution of  equation \eqref{e4.1}, which  extends the strong trace property first established by Vasseur~\cite{Va}, for scalar conservation laws, when flux functions are almost everywhere 
 non-degenerate, which corresponds to \eqref{e1.nondeg} in the purely hyperbolic case when $d'=d$,  so $d''=0$ and $b\equiv0$.  In this connection, we recall that Panov~\cite{Pv}  obtained an extension of  the result in \cite{Va}, still for the hyperbolic case, dropping completely the  non-degeneracy restriction over the flux function (see also \cite{KV}, concerning the same question in the one-dimensional case). We also mention that in \cite{Kw} we find a first attempt to extend the trace property for  degenerate parabolic equations in the isotropic case. 
 
 Before passing to the statement and proof of the strong trace property, we would like to give a very brief overview of what the result means and the basic strategy of the proof. We are going to prove, in particular, that, given a strongly  regular deformation for the hyperbolic boundary, based on a deformation for  $\po\Om'$, the entropy solution of problem  \eqref{e1.1}--\eqref{e1.4}, restricted to the corresponding family of surfaces,  has a strong limit, in the $L^1$ sense, when approaching the hyperbolic boundary.  The proof follows the steps of the proof in \cite{Va}, with the necessary adaptations for the parabolic equation \eqref{e1.1}.  The proof starts by introducing the kinetic formulation for the entropy inequality for \eqref{e1.1}. 
 Then, we apply Theorem~\ref{T:2.3'} in order to obtain a well defined weak trace for the kinetic $\chi$-function. The fact that $\chi$-functions assume values in $\{-1,0,1\}$ implies that, if the weak trace is a $\chi$-function, then the convergence is strong and the weak trace is, in fact, a strong trace, from where it easily follows the strong trace property for the entropy solution. To prove that the weak trace is a $\chi$-function, use is made of the localization (blow-up) method introduced in \cite{Va}. The idea is to fix a point in the boundary, arbitrarily chosen from a set of full measure, introduce new variables around this point, define a sequence of $\chi$-functions from the kinetic $\chi$- function by scaling these new variables by means of a parameter $\ve$, making $\ve\to0$, and applying a suitable version of the averaging lemma ({\em cf.} \cite{PB}) to show that the scaling sequence of $\chi$-functions converges strongly in $L_\loc^1$, and so its limit is a $\chi$-function. Since the initial functions for the scaling sequence are given by the corresponding scaling of the weak trace, which strongly converge to the value at the fixed point, the final step is then to prove, as in \cite{Va},  that the initial function of the limit is the limit of the initial functions.    Now we pass to the rigorous statement and its proof.

 \begin{theorem}\label{T:5.1}  Assume that\eqref{e1.TT} holds, and let  $u(t,x)\in L^\infty(U_T)$ satisfy $\nabla_{x''}b(u)\in L^2(U_T)$ and,  for all $0\le \varphi\in C_0^\infty(U_T)$ and $k\in\R$,
 \begin{equation}\label{e5.3}
\int_{U_T}\{|u-k|\po_t\varphi-K_{x''}(u,k)\cdot\nabla \varphi\}\,dx\,dt\ge0.
\end{equation}
Then, there exists $u^\tau\in L^\infty(\Gamma_T')$  such that, for any  deformation of $\po\Om'$,  $\Psi': [0,1]\X \po\Om'\to \bar\Om'$, strongly regular over $\po\Om'$,  if  $\Psi: [0,1]\X\Gamma'_T \to \bar U_T$, is defined by $\Psi(s,t,x',x'')=(t,\Psi'(s,x'),x'')$, we have $u(\Psi(s,t,x))\to u^\tau(t,x)$, as $s\to0$, in $L^1(\Gamma_T')$.
\end{theorem}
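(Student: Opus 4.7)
The plan is to follow Vasseur's kinetic approach \cite{Va}, adapted to the degenerate parabolic setting and to the $L^2$ (rather than $L^\infty$) regularity of the parabolic flux. First, I would recast \eqref{e5.3} in kinetic form: setting
$$
\chi(v;u):=\mathbf{1}_{\{0<v<u\}}-\mathbf{1}_{\{u<v<0\}},
$$
a standard computation, using \eqref{e1.b2} together with the hypothesis $\nabla_{x''}b(u)\in L^2(U_T)$, yields a nonnegative locally finite measure $m(t,x,v)$ such that
$$
\po_t\chi(v;u)+\bfa(v)\cdot\nabla_x\chi(v;u)-B'(v):\nabla_{x''}^2\chi(v;u)=\po_v m\quad\text{in }U_T\times\R.
$$
For each fixed $v$, the space--time vector field
$$
\Gbf_v(t,x):=\Bigl(\chi(v;u(t,x)),\ \bfa(v)\chi(v;u(t,x))-\nabla_{x''}\cdot(B(v)\chi(v;u(t,x)))\Bigr)
$$
is therefore an element of $\DM^2_{\loc}(U_T)$. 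Crucially, since $b_{ij}'\equiv 0$ whenever $\min\{i,j\}\le d'$, the parabolic term has no component in the $x'$-directions, which are the only ones transverse to $\Gamma_T'$; this is what makes the weak trace well defined on $\Gamma_T'$ despite the merely $L^2$ regularity of the parabolic flux.

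Given a deformation $\Psi'$ strongly regular over $\po\Om'$, the map $\Psi(s,t,x',x''):=(t,\Psi'(s,x'),x'')$ is a strongly regular Lipschitz deformation of the piece $\Gamma_T'$ of $\po U_T$ in the sense of Definition~\ref{D:2.2}. Applying Theorem~\ref{T:2.3'} to $\Gbf_v$ for each $v$ in a countable dense subset $D\subset\R$, and observing that only the bounded transport part of $\Gbf_v$ contributes to the normal trace on $\Gamma_T'$, one obtains an $L^\infty$ function $\chi^\tau(v;t,x)\in[-1,1]$ with
$$
\chi(v;u(\Psi(s,\cdot)))\wto\chi^\tau(v;\cdot)\quad\text{weakly-$\ast$ in }L^\infty(\Gamma_T'),\text{ as }s\to 0.
$$
A density argument, based on the $1$-Lipschitz dependence of $\chi(\cdot;u)$ on $v$, extends $\chi^\tau$ to all $v\in\R$ and produces a jointly measurable representative.

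The decisive step, which I expect to be the main obstacle, is to identify $\chi^\tau(v;t,x)=\chi(v;u^\tau(t,x))$ for some $u^\tau\in L^\infty(\Gamma_T')$. I would mimic Vasseur's blow-up procedure: fix a simultaneous Lebesgue point $(t_0,x_0)\in\Gamma_T'$ of $\chi^\tau(v;\cdot)$ for every $v\in D$, straighten $\po\Om'$ near $x_0$ so that the domain becomes locally the half-space $\{y_1'>0\}$ via a rigid motion $R$, and introduce the parabolic rescaling
$$
f_\ve(v;\tau,y):=\chi\bigl(v;u(t_0+\ve\tau,x_0+\ve R y)\bigr).
$$
The $f_\ve$ satisfy a rescaled kinetic equation on a half-space, and the $L^2$-bound on $\nabla_{x''}b(u)$ ensures that the rescaled dissipation term vanishes in the limit in the appropriate sense. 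Assumption \eqref{e1.TT} makes the hypotheses of Lemma~2.3 of \cite{TT} hold uniformly in $\ve$, so the velocity averages $\int f_\ve(v;\cdot)\psi(v)\,dv$ are relatively compact in $L^1_\loc$. Since each $f_\ve$ is $\{-1,0,1\}$-valued and this class is closed under $L^1$-convergence, $f_\ve$ converges strongly in $L^1_\loc$ along a subsequence to a $\chi$-function $\chi(v;\bar u(\tau,y))$. The trace of $\bar u$ at $\{y_1'=0\}$ is, by construction and the Lebesgue point property, the constant $u^\tau(t_0,x_0)$, and consistency with the weak trace forces $\chi^\tau(v;t_0,x_0)=\chi(v;u^\tau(t_0,x_0))$, exactly as in \cite{Va}.

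Once $\chi^\tau(v;\cdot)=\chi(v;u^\tau(\cdot))$ is established a.e.\ on $\Gamma_T'$, strong convergence is immediate: for each $v$, the weak-$\ast$ convergence $\chi(v;u(\Psi(s,\cdot)))\wto\chi(v;u^\tau)$ between $\{-1,0,1\}$-valued functions preserves $L^1$-norms in the limit and is therefore strong in $L^1(\Gamma_T')$. Integrating in $v$ over $[-\|u\|_\infty,\|u\|_\infty]$ via Fubini yields $u(\Psi(s,\cdot))\to u^\tau$ in $L^1(\Gamma_T')$, as claimed.
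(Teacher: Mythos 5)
Your proposal follows the same overall route as the paper: kinetic formulation of \eqref{e5.3}, existence of a weak trace via Theorem~\ref{T:2.3'} for divergence--measure fields (exploiting that the parabolic flux has no component in the $x'$-directions, so the normal flux on $\Gamma_T'$ is uniformly bounded), Vasseur-type blow-up, the averaging lemma of \cite{TT}, and the rigidity of $\chi$-functions to upgrade the weak trace to a strong one. That part is sound and matches the paper's Steps \#1--\#2 and the concluding Claim \#2.

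The gap is in the rescaling step, and it is not merely cosmetic. You call it a ``parabolic rescaling'', but the formula you write, $f_\ve(v;\tau,y):=\chi(v;u(t_0+\ve\tau,x_0+\ve R y))$, scales \emph{every} variable isotropically by $\ve$. Under that scaling the second-order operator $B'(\xi):\nabla_{x''}^2$ picks up an extra factor $\ve^{-1}$ relative to the first-order transport terms; the rescaled kinetic equation then has an unbounded diffusion coefficient as $\ve\to 0$, which is not the hypothesis under which the averaging lemma~2.3 of \cite{TT} yields compactness, and the dissipation normalization you allude to does not produce a finite limit either. What the proof needs is the genuinely anisotropic scaling used in the paper: the normal direction and the $t,x'$ tangential directions are scaled by $\ve$, while the $x''$-directions are scaled by $\ve^{1/2}$ (that is, $\Lambda(\ve)(\hat z)=(\ve\hat z',\ve^{1/2}\hat z'')$). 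Only with this choice do the normal transport, the $\hat y'$-transport, and the $\nabla_{\hat y''}^2$ diffusion all remain of order one, while the $\hat y''$-transport acquires a factor $\ve^{1/2}$ and the dissipation measure, normalized by $\ve^{-(d'+d''/2)}$, vanishes along a subsequence for a.e.\ boundary point. You should also not gloss over the analogues of Vasseur's Lemmas~2 and~3 (the paper's Claims \#3 and \#4): the simultaneous Lebesgue-point statement for $f^\tau$ and for the variable coefficient $\tilde a^0(\hat y,\xi)$ in the anisotropic geometry must be formulated with this scaling, and is the step that lets you identify the initial data of the blow-up limit with $f^\tau(\hat y,\cdot)$.
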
 
  
\begin{proof} Before starting the proof, we observe that although \eqref{e5.3} does not imply that $(|u-k|,-K_{x''}(u,k))\in \DM^2(U_T)$, it is nevertheless easy to verify, using the arguments in the proof of Lemma~\ref{L:1.1}, that for any Lipschitz deformation of $\Om''$, $\Om''_s$, $0\le s\le 1$, we have $(|u-k|,-K_{x''}(u,k))\in \DM^2(U_{sT})$,
where $U_{sT}=(0,T)\X\Om'\X\Om_s''$, for $0<s\le1$. Therefore, we may apply the following proof to $U_{sT}$ for any $s>0$, and then get the stated result for $U_T$.  

We divide the proof into six steps. 

Step \#1.  It is well known (see, e.g., \cite{CP}) that if $u\in L^\infty(U_T)$ satisfies \eqref{e5.3}, then the function
 $$
 f(t,x,\xi)=\chi(\xi; u(x,t)),\quad \text{where}\quad \chi(\xi;u):=\begin{cases} -1, \quad u\le \xi< 0,\\ 1,\quad  0<\xi \le u,\\ 0,\quad |\xi|>|u|,\end{cases}
 $$
 satisfies
 \begin{equation} \label{e5.4}
 \po_t f+\bfa(\xi)\cdot\nabla f-B'(\xi):\nabla_{x''}^2 f=\po_{\xi} m,
 \end{equation}
 in the sense of distributions in $\DD'(U_T\X(-L,L))$, with $\bfa(\xi)=\bff'(\xi)$, for some $m\in\M_+(U_T\X(-L,L))$, where $\M_+(U_T\X(-L,L))$ denotes the space of non-negative Radon measures on $U_T\X(-L,L)$.  Indeed, choosing $k=\pm \|u\|_\infty$ in \eqref{e5.3}, as usual, we deduce that $u$ satisfies \eqref{e1.1} in the sense of distributions in $U_T$. Together with \eqref{e5.3}, this implies that for any convex function $\eta:\R\to\R$, we have, in the sense of the distributions in $U_T$, 
 \begin{equation}\label{e5.5}
 \eta(u)_t+\nabla\cdot \bff_{\eta}(u)-\sum_{i,j=d'+1}^d\po_{x_ix_j}^2 b_{ij\eta}(u)=-m_{\eta}(t,x),
 \end{equation}
 where $\bff_\eta'(u)=\eta'(u)\bff'(u)$, $b_{ij\eta}'(u)=\eta'(u)b_{ij}'(u)$, and $m_\eta\in\M_+(U_T)$.  Since, we have
 \begin{gather*}
 \eta(u)=\int_{\R}\eta'(\xi)\chi(\xi;u)\,d\xi,\quad \bff_{\eta}(u)=\int_{\R}\eta'(\xi)\bff'(\xi)\chi(\xi;u)\,d\xi,\\
  b_{ij\eta}(u)=\int_{\R}\eta'(\xi)b_{ij}'(\xi)\chi(\xi;u)\,d\xi,
 \end{gather*}
(assuming $\eta(0)=b_{ij\eta}(0)=0$, $\bff_\eta(0)=0$)  and, for each fixed $\xi\in\R$,  $\eta_\xi(u)=(\xi-u)_+-(\xi)_+$, with $(\s)_+=\max\{\s,0\}$,  is convex,  writing \eqref{e5.5} for $\eta=\eta_\xi$,  deriving with respect to $\xi$, multiplying by $\eta'(\xi)$, for an arbitrary smooth convex function $\eta$, and integrating the result with respect to $\xi$, we conclude that $m_\eta(t,x)=\int_{\R}\eta''(\xi)\,m(t,x,\xi)$, with $m(t,x,\xi)=m_{\eta_\xi}(t,x)$, and $f(t,x,\xi)=\chi(\xi;u(t,x))$ satisfies \eqref{e5.4},  in $\DD'(U_T\X(-L,L))$.   

Step \#2. Following \cite{Va}, since $\po\Om'$ is a Lipschitz boundary, we  fix an open subset of $\po\Om'$, $\S_\a'$, parameterized by $(\g_\a'(\hat y'),\hat y')$, $\hat y'\in (-r_\a,r_\a)^{d'-1}$, for a certain Lipschitz function $\g_\a':\R^{d'-1}\to\R$, and we denote by $\CR_\a'$ the isometry on $\R^{d'}$ representing the translation, relabeling and reorienting necessary to parameterize $\S_\a'$ as the graph of $\g_\a'$ over $(-r_\a,r_\a)^{d'-1}$. Let $\Psi'$ be a deformation of $\po\Om'$ strongly regular over $\S_\a'$, and let $\Psi:[0,1]\X\Gamma_T'\to\bar U_T$ be defined by 
\begin{equation}\label{e5.Psi}
\Psi(s,t,x',x'')=(t,\Psi'(s,x'),x'').
\end{equation}
We set  $\CR_\a(t,x',x''):=(\CR_\a'(x'),t,x'')=(y',y''')$, $\g_\a(\hat y):=\g_\a'(\hat y')$,  $\hat y=(\hat y', y''')\in \R^d=\R^{d'-1}\X\R^{d''+1}$, where we set $y'''=(t,x'')$, and, for $\hat y\in (-r_\a,r_\a)^d$, define
\begin{equation}
\psi(s,\hat y)=\CR_\a\Psi(s,\CR_\a^{-1}( \g_\a(\hat y),\hat y)).\label{e5.Psi'}
\end{equation}
Writing \eqref{e5.4} in the $y=\CR_\a(t,x)$ coordinates, we get 
\begin{equation}\label{e5.4'}
a(\xi)\cdot\nabla_y f-B'(\xi):\nabla_{\hat y''}^2 f=\po_{\xi} m,
 \end{equation}
 where $\hat y''=x''$, $a(\xi)=(a^0(\xi),\hat a(\xi)):=A_\a'(\xi)$ and 
 $$
 A_\a(\xi)=\CR_\a(\xi,\bff(\xi))=(\CR_\a'(\pi_{d'}(\bff(\xi))), \xi, \pi_{d''}(\bff(\xi))),
 $$ 
 that is, $a(\xi)=\CR_\a(1,\bfa(\xi))=(\CR_\a'(\pi_{d'}(\bfa(\xi))), 1, \pi_{d''}(\bfa(\xi)))$, where $\pi_{d''}(v_1,\cdots,v_d)=(v_{d'+1},\cdots,v_d)$. 
 Set
\begin{equation}
f_\psi(s,\hat y,\xi)=f(\psi(s,\hat y),\xi).\label{e5.Psi''}
\end{equation}
Now we use Theorem~\ref{T:2.3'}, more specifically relation \eqref{e2.3''}, to prove the existence of a weak trace for $f_\psi$, which is the analogue of proposition~1, in \cite{Va}.

{\em Claim \#1}. Let $f$ be a solution of \eqref{e5.4} in 
\begin{equation}\label{e.orient}
\Om_\a:=\{y\in(-r_\a,r_\a)^{d+1}\,:\,y_0>\g_\a(y_1,\cdots,y_d)\}
\end{equation}
 with $\bfa(\xi)$ and $b(\xi)$ satisfying \eqref{e1.nondeg}, respectively. Then there is a unique $f^\tau\in L^\infty((-r_\a,r_\a)^d\X(-L,L))$ such that, for $\Psi$ given by \eqref{e5.Psi}, where $\Psi'$ is any Lipschitz deformation of $\po\Om'$ strongly regular over $\S_\a'$, we have
\begin{equation}\label{e5.5'} 
\esslim_{s\to0} f_\psi(s,\cdot,\cdot)=f^\tau\quad\text{in $H^{-1}((-r_\a,r_\a)^d\X(-L,L))$}.
\end{equation}

The proof of this claim reduces to the same proof of proposition~1, in \cite{Va}, but here we have to use Theorem~\ref{T:2.3'}, instead of the corresponding result for $\DM^\infty$-fields in \cite{CF1}, and we have to use the fact that $F(\Psi_s(\om))\cdot\nu_s$ is uniformly bounded in $L^\infty(\Gamma_T')$, since the normal to 
$\Psi_s(\Gamma_T')$ is orthogonal to the space of the coordinates $(t,x'')$. 

Lemma~1 in \cite{Va} establishes the equivalence, for a sequence of   $\chi$-functions $f_n(z,\xi)=\chi(u_n(z),\xi)$ converging weakly* to $f$ in $L^\infty(\CO\X(-L,L))$, of the assertions: (i) $f_n$ converges strongly to $f$ in $L^1_\loc(\CO\X(-L,L))$; (ii) $u_n$ converges strongly  to $u(\cdot)=\int_{-L}^L f(\cdot,\xi)\,d\xi$; (iii) $f$ is a $\chi$-function. Due to this result, we have the following analogue of proposition~2 of \cite{Va}:

{\em Claim \#2}. The function $f^\tau$ is a $\chi$-function if and only if $f^\tau$ is a strong trace, namely, for every   $\Psi$ given by \eqref{e5.Psi}, where $\Psi'$ is any Lipschitz deformation of $\po\Om'$ strongly regular over $\S_\a'$, 
$$
\esslim_{s\to0} f_\psi(s,\cdot,\cdot)=f^\tau\quad \text{in $L^1((-r_\a,r_\a)^d\X(-L,L))$}.
$$ 

Step \#3. Next, we follow the so called localization method in section~4 of \cite{Va}, whose goal is the proof that $f^\tau(\hat y, \cdot)$ is a $\chi$-function for almost every $\hat y\in(-r_\a,r_\a)^d$, so proving the strong convergence of $f_\psi(s,\cdot,\cdot)$, as $s\to0$. For simplicity, from now on, we fix the deformation $\Psi'$ of $\po\Om'$ as given over $\S_\a$ by
$\psi'(s,\hat y'):=\CR_\a'\Psi'(s,\CR_\a'^{-1}(\g'_\a(\hat y'), \hat y'))= (s+\g_\a'(\hat y'),\hat y')$, and so, for $\psi$ given by \eqref{e5.Psi'}, we have $\psi(s,\hat y)=(s+\g_\a(\hat y),\hat y)$. We then identify $s=y_0$ and define
$$
\tilde f(y,\xi)=f_\psi(y_0,\hat y,\xi).
$$
We have $\psi(y)\in\Om_\a$ if and only if $y\in(0,r_\a)\X(-r_\a,r_\a)^d$. 
 We easily check that $\tilde f$ is a solution of 
\begin{equation}\label{e5.6}
\tilde a^0(\hat y,\xi)\po_{y_0}\tilde f + \hat a(\xi)\cdot\nabla_{\hat y}\tilde f-B'(\xi):\nabla_{\hat y''}^2 \tilde f=\po_\xi \tilde m,
\end{equation}
where 
\begin{equation} \label{e5.7}
\begin{aligned}
\tilde a^0(\hat y,\xi)&=a^0(\xi)-\nabla\g_\a(\hat y)\cdot \hat a(\xi)\\
&=\lambda(\hat y) \left(a^0(\xi)\nu_0+\pi_{d'}(\hat a)(\xi)\cdot  \pi_{d'}(\hat\nu(\hat y))\right),
\end{aligned}
\end{equation}
with $\l(\hat y)> 0$, and $\tilde m(y,\xi)= m(\psi(y),\xi)$, where we use the fact that $\nabla\g_\a(\hat y)$ is orthogonal to the space of the $y''$ coordinates and $\nu=(\nu_0,\hat \nu)$ is the  outward unit normal to the graph $(\g_\a(\hat y),\hat y)$, $\hat y\in(-r_\a,r_\a)^d$. We remark, in particular, that $\tilde a^0(\hat y,\xi)\ne0$, for a.e.\ $\xi$, for each $\hat y$, due to the non-degeneracy condition \eqref{e1.nondeg}, which, written  for $a(\xi)=\CR_\a(1,\bfa(\xi))$, takes the form
$$
\LL^1\{\xi\in\R\,:\, a^0(\xi)\z_0+\pi_{d'}(\hat a)(\xi)\cdot \z=0\}=0, \quad\text{for any $(\z_0,\z)\in\R^{d'+1}$, $\z_0^2+|\z|^2=1$}.
$$

At this point we observe that \eqref{e.orient} involves a choice of orientation for the $y_0$-axis. On the other hand, if we had chosen the opposite orientation 
with $y_0<\g_\a(\hat y)$, then the deformation now would read $\psi(\widehat{y},s) = (\gamma_\a(\widehat{y}) - s, \widehat{y})$ and, with the identification $y_0=-s$, the coefficient of the derivative $\po/\po y_0$ would still be 
given by the first equation in \eqref{e5.7}.

Next, following \cite{Va}, we have the following two claims which are the analogues of lemma~2 and lemma~3 of \cite{Va}, whose respective proofs are identical to those of the latter.  

{\em Claim \#3}.  There exists a sequence $\ve_n$ which converges to 0 and a set $\E\subset (-r_\a,r_\a)^d$ with $\LL((-r_\a,r_\a)^d\setminus\E)=0$ such that for $\hat y\in\E$ and for every $R>0$
$$
\lim_{n\to\infty}\frac1{\ve_n^{d'+\frac{d''}2}}\tilde m\left((0,R\ve_n)\X(\hat y+\Lambda(\ve_n)(-R,R)^{d}))\X(-L,L)\right)=0,
$$
where $\Lambda(\ve_n)(\hat z)=(\ve_n\hat z', \ve_n^{1/2}\hat z'')$, for all $\hat z=(\hat z',\hat z'')\in\R^d=\R^{d'}\X\R^{d''}$, and so 
$$
\Lambda(\ve_n)(-R,R)^d=(-R\ve_n,R\ve_n)^{d'}\X(-R\ve_n^{1/2}, R\ve_n^{1/2})^{d''}.
$$

{\em Claim \#4}. There exists a subsequence still denoted $\ve_n$ and a subset $\E'$ of $(-r_\a,r_\a)^d$ such that $\E'\subset\E$, $\LL((-r_\a,r_\a)^d\setminus \E')=0$, and for every $\hat y\in\E'$ and every $R>0$,
\begin{align*}
&\lim_{\ve_n\to0}\int_{-L}^L\int_{(-R,R)^d}|f^\tau(\hat y,\xi)-f^\tau(\hat y+\Lambda(\ve_n)\underline{\hat y},\xi)|\,d\underline{\hat y}\,d\xi=0,\\
&\lim_{\ve_n\to0}\int_{-L}^L\int_{(-R,R)^d}|\tilde a^0(\hat y,\xi)-\tilde a^0(\hat y+\Lambda(\ve_n)\underline{\hat y},\xi)|\,d\underline{\hat y}\,d\xi=0.
\end{align*}

Step \#4.  Keeping following the lines of \cite{Va}, we define
$$
\Om_\a^\ve=(0,r_\a/\ve)\X\left(\Lambda(\ve)^{-1}(-r_\a,r_\a)^d\right),
$$
where $\Lambda(\ve)^{-1}(\hat z)=(\ve^{-1} \hat z',\ve^{-1/2}\hat z'')$ and so $\Lambda(\ve)^{-1}(-r_\a,r_\a)^d=(-r_\a/\ve,r_\a/\ve)^{d'}\X(-r_\a/\ve^{1/2},r_\a/\ve^{1/2})^{d''}$.  The final goal is to show that for every $\hat y\in\E'$, $f^\tau(\hat y,\cdot)$ is a $\chi$-function. We thus fix $\hat y\in\E'$, and denote $y=(0,\hat y)$ the associated point on $\Gamma_\a:=\{(\g_\a(\hat y),\hat y)\,:\, \hat y\in(-r_\a,r_\a)^d\}$. We then rescale the function $\tilde f$, introducing a new function $\tilde f_\ve$ which depends on a new variable $\underline{y}=(\underline y_0,\underline{\hat y}) \in\Om_\a^\ve$, defined by
\begin{equation}\label{e5.8}
\tilde f_\ve(\underline y,\xi)=\tilde f(y+(\ve \underline y_0, \Lambda(\ve) \underline{\hat y}),\xi),
\end{equation}
where we omit the dependence of $\tilde f_\ve$ on $y$ since it is going to be fixed until the end of the proof. As in \cite{Va}, we have the relation
\begin{equation}\label{e5.9}
\tilde f_\ve(0,\underline{\hat y},\xi)=f^\tau(\hat y+\Lambda(\ve)\underline{\hat y}, \xi).
\end{equation}
The point is then to study the limit of $\tilde f_\ve$ as $\ve\to0$, in order to gain knowledge on $f^\tau(\hat y,\cdot)$. We set
\begin{equation}\label{e5.10}
\tilde a_\ve^0(\underline{\hat y},\xi)=\tilde a^0(\hat y+\Lambda(\ve)\underline{\hat y},\xi),
\end{equation}
and, from \eqref{e5.6}, we get
\begin{equation}\label{e5.11}
\tilde a_\ve^0(\un{\hat y},\xi)\po_{\un{y}_0} \tilde f_\ve+\hat a'(\xi)\cdot\nabla_{\un{\hat y}'}\tilde f_\ve+\ve^{1/2}\,\hat a''(\xi)\cdot\nabla_{\un{\hat y}''}\tilde f_\ve-B'(\xi)\cdot\nabla_{\un{\hat y}''}^2\tilde f_\ve=\po_\xi\tilde m_\ve,
\end{equation}
where $\tilde m_\ve$ is the nonnegative measure defined for every real $R_1^i<R_2^i$, $L_1<L_2$ by
\begin{multline}\label{e5.12}
\tilde m_\ve\left(\underset{0\le i\le d}{\Pi}[R_1^i,R_2^i]\X[L_1,L_2]\right)\\
=\frac{1}{\ve^{d'+\frac{d''}2}}\tilde m\left([\ve R_1^0, \ve R_2^0]\X(\hat y+\Lambda(\ve)\underset{1\le i\le d}{\Pi}[R_1^i,R_2^i])\X[L_1,L_2]\right).
\end{multline}
We may rewrite \eqref{e5.11}  in the following more suitable form
\begin{multline}\label{e5.11'}
\tilde a^0(\hat y ,\xi)\po_{\un{y}_0} \tilde f_\ve+\hat a'(\xi)\cdot\nabla_{\un{\hat y}'}\tilde f_\ve-B'(\xi):\nabla_{\un{\hat y}''}^2\tilde f_\ve=\\
-\ve^{1/2}\,\hat a''(\xi)\cdot\nabla_{\un{\hat y}''}\tilde f_\ve + \po_{\un{y}_0} \left((\tilde a^0(\hat y,\xi)- \tilde a_\ve^0(\un{\hat y},\xi)) \tilde f_\ve\right)+
\po_\xi\tilde m_\ve.
\end{multline}
By Claims~\#3 and~\#4 the distributions on the right-hand side of \eqref{e5.11'}  converge to 0 in the sense of the distributions on $(0,\infty)\X\R^{d'}\X\R^{d''}\X (-L,L)$. since $\tilde f_\ve$ is uniformly bounded, by passing to a subsequence if necessary 
we have that there exists $\tilde f_\infty \in L^\infty((0,\infty)\X\R^{d'}\X\R^{d''}\X(-L,L))$ such that $\tilde f_\ve\wto \tilde f_\infty$ in the sense of the distributions on $(0,\infty)\X\R^{d'}\X\R^{d''}\X(-L,L)$. We then see that $\tilde f_\infty$ satisfies
 \begin{equation}\label{e5.13}
 \tilde a^0(\hat y,\xi)\po_{\un{y}_0}\tilde f_\infty+\hat a'(\xi)\cdot\nabla_{\un{\hat y}'}\tilde f_\infty-B'(\xi):\nabla_{\un{y}''}^2\tilde f_\infty=0.
 \end{equation}

 We then have the following claim corresponding to proposition~3 of \cite{Va}.

 {\em Claim \#5}. There exists a subsequence $\ve_n$ going to 0 such that $\tilde f_{\ve_n}$ converges  strongly to $\tilde f_\infty$ in $L_\loc^1(\R_+\X\R^d\X(-L,L))$ and so $\tilde f_\infty$ is a $\chi$-function and satisfies \eqref{e5.13}.

 In order to address the convergence asserted in this claim, the most suitable result so far in the literature is the averaging lemma ~2.3 by Tadmor and Tao in \cite{TT}. Actually, in \cite{LPT} a very general result is stated (theorem~C, in the appendix of \cite{LPT}) which would be totally adequate  for the case we are interested in here, however no proof is provided and such a general result seems still to be lacking a proof. However, as we will now show, the averaging lemma~2.3 in \cite{TT} is enough if $\Om'$ is a starlike polyhedric domain (see Remark~\ref{R:2.3}).  
 
 More specifically, let us first localize the equation \eqref{e5.11'} by multiplying it by a bump function $\Phi(\ul y,\xi)=\Phi_1(\ul y) \Phi_2(\xi)$, with $\Phi_1\in C_c^\infty((0,\infty)\X\R^d)$, $\Phi_2\in C_c^\infty((-L,L))$, and  let us denote 
 $\tilde f_\ve^\Phi=\Phi \tilde f_\ve$.  We then get te following equation for $\tilde f_\ve^\Phi$,
 \begin{multline}\label{e5.11''}
\tilde a^0(\hat y ,\xi)\po_{\un{y}_0} \tilde f_\ve^\Phi+\hat a'(\xi)\cdot\nabla_{\un{\hat y}'}\tilde f_\ve^\Phi -B'(\xi):\nabla_{\un{\hat y}''}^2\tilde f_\ve^\Phi=\\
-\ve^{1/2} \Phi \hat a''(\xi)\cdot\nabla_{\un{\hat y}''}\tilde f_\ve  + \po_{\un{y}_0} \left((\tilde a^0(\hat y,\xi)- \tilde a_\ve^0(\un{\hat y},\xi)) \tilde f_\ve^\Phi \right)+
\po_\xi(\Phi \tilde m_\ve)\\
+ \tilde f_\ve \tilde a^0(\hat y,\xi)  \po_{\ul{y}_0}\Phi+ \tilde f_\ve \hat a'(\xi)\cdot\nabla_{\un{\hat y}'}\Phi-\tilde f_\ve B'(\xi)\cdot\nabla_{\un{\hat y}''}^2\Phi+ \s(\xi)\nabla_{\ul{y}''}\Phi\otimes \s(\xi)\nabla_{\ul{y}''}\tilde f_\ve\\
-  \left((\tilde a^0(\hat y,\xi)- \tilde a_\ve^0(\un{\hat y},\xi)) \tilde f_\ve \right)\po_{\un{y}_0}\Phi -\tilde m_\ve\po_\xi \Phi.
\end{multline} 
Concerning equation \eqref{e5.11''} we observe that, with the exception of  
 \begin{equation}\label{e5.11''a}
 \po_{\un{y}_0} \left((\tilde a^0(\hat y,\xi)- \tilde a_\ve^0(\un{\hat y},\xi)) \tilde f_\ve^\Phi \right),
 \end{equation}
 all other terms in the right-hand side of this equation may be cast  in the form
 $\po_\xi \mu$ for some measure $\mu\in \M((0,\infty)\X\R^d\X(-L,L))$. Indeed, the first term may written as
 \begin{equation}\label{e5.11''b}
 -\ve\Phi \po_{\xi}\nabla_{x''}\left(\sgn(\tilde u_\ve-\xi)_+(f''(\tilde u_\ve)-f''(\xi))\right),
 \end{equation}
 where $\tilde u_\ve(\ul y)=\int_{\R}\tilde f_\ve(\ul y,\xi)\,d\xi$. Due to the assumption  \eqref{e1.a''}  and the hypothesis  that $\nabla_{x''}b(u)\in L^2(U_T)$, it follows that \eqref{e5.11''b} is $\Phi$ times   the $\xi$-derivative of a measure scaled in the same way as $m_\ve$ and so converging to zero by Claim~\#3.    That is, it has the form $\po_\xi(\Phi \mu_\ve)-\mu_\ve\po_\xi\Phi$, and so it can be rendered as $\po_\xi \tilde \mu_\ve$, for a measure $\tilde \mu_\ve$ converging to zero as $\ve\to0$.  
 The first, second and third terms in the third line of \eqref{e5.11''} can clearly be put in the form $\po_\xi\mu_\ve$ for a measure $\mu_\ve$ converging to
 $$
 \int_0^\xi \Bigl\{\tilde f_\infty \tilde a^0(\hat y,\xi)  \po_{\ul{y}_0}\Phi+ \tilde f_\infty \hat a'(\xi)\cdot\nabla_{\un{\hat y}'}\Phi-\tilde f_\infty B'(\xi)\cdot\nabla_{\un{\hat y}''}^2\Phi \Bigr\}\,d\z,
 $$
 as $\ve\to0$. 
  As for the last term in the third line of \eqref{e5.11''},  we first observe that, formally, we have
  \begin{multline*} 
   \s(\xi)\nabla_{\ul{y}''}\tilde f_\ve=\s(\xi)\nabla_{\ul{y}''}\tilde u_\ve \d_{\tilde u_\ve(\ul y)}= \nabla_{\ul{y}''}\int_0^{\tilde u_\ve(\ul{y}'')}\s(\z)\,d\z\,\d_{\tilde u_\ve(\ul{y}'')}(\xi) \\
   =-\ve^{1/2}\nabla_{x''}\int_0^{\tilde u_\ve(\ul{y}'')}\s(\z)\,d\z\,\po_{\xi} 1_{(0, \tilde u_\ve(\ul{y}''))}(\xi)\\
   =-\po_{\xi}\left(\ve^{1/2}\nabla_{x''}\int_0^{\tilde u_\ve(\ul{y}'')}\s(\z)\,d\z\, 1_{(0, \tilde u_\ve(\ul{y}''))}(\xi) \right).
 \end{multline*}
 Now the primitive $\Sigma(u)=\int_0^u\s(\z)\,d\z$ is a Lipschitz function of the function $b(u)$, that is, $\Sigma (u)=\tilde \Sigma(b(u))$ for some Lipschitz $\tilde \Sigma$ as follows from \eqref{e1.B'}.
 The formal calculation may be easily made rigorous. Using these facts  and other trivial rearrangements we may at last also render the third term in the third line of \eqref{e5.11''} in the form $\po_\xi\mu_\ve$ 
 for some measure $\mu_\ve$ converging to zero as $\ve\to0$. Finally, it is immediate that both terms in the last line of \eqref{e5.11''} can also be put in the form $\po_\xi \mu_\ve$ for some measure $\mu_\ve\wto0$ as $\ve\to0$. 
 
 As for the term in \eqref{e5.11''a}, we observe that it vanishes if $\Om'$ is a star like polyhedric domain (see Remark~\ref{R:2.3}),  and $\hat y$ corresponds  to a point in the smooth (flat) part of the boundary $\po\Om'$, since in this case we may choose $\g_\a=\text{const.}$ in \eqref{e.orient}.    Therefore, the claim follows in this case from Tadmor and Tao's averaging lemma~2.3 in \cite{TT}. 
 
 For the case where $\Om'$ is a general bounded open set in $\R^{d'}$ with smooth boundary, we apply the following lemma, observing that, if the reduced symbol $\LL_0(\tau, \k,\xi)$ satisfies \eqref{e1.nondeg}, then the reduced symbol
 $$
\tilde \LL_0(\tau,\k, \xi):= \tilde a^0(\hat y ,\xi)\tau +\hat a'(\xi)\cdot\k' -B'(\xi):(\k''\otimes\k'')
$$
 satisfies: for all $(\tau,\k',\k'')\in\R^{d+1}$, with $\tau^2+|\k'|^2+|\k''|^2=1$, we have
 \begin{equation}\label{e5.nondeg0}
 \operatorname{meas}\left\{\xi\in[-L_0,L_0]\,:\, |\tilde a^0(\hat y,\xi)\tau+ \hat a'(\xi)\cdot\k'|^2+\left(B'(\xi):(\k''\otimes\k'')\right)^2=0\right\}=0,
 \end{equation}
 as it is easy to verify.  The following result is from \cite{FLMNZ3}. It also follows from a general theory on averaging lemmas recently developed  by J.F.C.~Nariyoshi in his PhD thesis in \cite{Na}.

\begin{lemma}\label{L:5.1} Let $N,N',N''$ be positive integers with $N=1+ N'+N''$, $f_n(y,\xi)$ be a  bounded sequence in $L^\infty(\R^N\X\R)$,  such that $f_n(y,\xi)=0$, if $(y,\xi)\notin K\X[-L,L]$, where $K\subset \R^{N}$ is compact.  
 Let  $h_n$  be compact in $L^p(\R^N\X\R)$, $1<p<2$.   For $y\in\R^N$ we write $y=(y_0,y',y'')$, $y_0\in\R$, $y'\in\R^{N'}$, $y''\in\R^{N''}$. 
 Let  $\CS^{N''\X N''}$ denote the space of the $N''\X N''$ symmetric matrices, and let $(\a_0,\a')\in C^2([-L_0,L_0];\R^{1+N'})$, $\b\in C^2([-L_0,L_0];\CS^{N''\X N''})$, for some $L_0>L$.  Assume
  \begin{equation}\label{e5.14}
 \a_0(\xi)\po_{y_0}f_n+   \a'(\xi)\cdot\nabla_{y'}f_n-\b(\xi): \nabla_{y''}^2f_n= (1-\po_\xi^2)(1-\Delta_y)^{1/2} h_n, 
 \end{equation}
where  $\a_0(\xi)\ne0$ and $\b(\xi)>0$, for a.e.\ $\xi\in [-L,L]$, and  the symbol $\LL(\tau,\k',\k'',\xi):=i( \a_0(\xi) \tau+\a'(\xi)\cdot \k')+\b (\xi)(\k'',\k'')$ satisfies: for all $\k:=(\tau,\k',\k'')\in\R^N$, with $\tau^2+|\k'|^2+|\k''|^2=1$, we have
\begin{equation}\label{e5.nondeg}
\operatorname{meas}\{\xi\in[-L,L]\,:\, |\a_0(\xi)\tau+\a'(\xi)\cdot\k'|^2+(\k'' \b(\xi)\k'')^2=0\}=0.
\end{equation} 
Then, the average $u_n(y)=\int_{\R} f_n(y,\xi)\,d\xi$ is relatively compact in $L_\loc^1(\R^N)$. 
\end{lemma}

 \begin{proof}  Since $f_n$ is uniformly bounded with compact support, we may, with no loss of generality, assume that $f_n\wto 0$, weakly-star in $L^\infty(\R^N\X\R)$. Similarly, since $h_n$ is compact in $L^p(\R^N\X\R)$, $1<p<2$, we may assume that $h_n\to0$ in $L^p(\R^N\X\R)$.     
 Let $\z\in C_c^\infty(\C)$ be radially symmetric and such that $\z(z)=1$ for $|z|<1$ and $\z(z)=0$, for $|z|>2$, and $\psi(z)=1-\z(z)$, $z\in\C$. Let $\k\in \R^N$, $\k=(\tau, \k',\k'')$, $\tau\in\R$, $\k'\in\R^{N'}$, $\k''\in\R^{N''}$. Let us denote  $\tilde \a'(\xi)=(\a_0(\xi),\a'(\xi))$
 and $\tilde \k'=(\tau,\k')$. 
 For $\d>0$ and $\g>0$, let us denote
 \begin{equation*}
 \begin{aligned}
 &\z^{(1)}(\k):=\z\left(\frac{|\k|}{\g}\right),\\
 &\z^{(2)}( \k,\xi):=\z \left(\frac{i\tilde \a'(\xi)\cdot \tilde\k'}{\d|\tilde \k'|}\right),\\
 &\z^{(3)}(\k,\xi):=\z\left(\frac{\b(\xi)(\k'',\k'')}{\d |\k''|^2}\right),\\
 &\psi^{(1)}(\k):=1-\z^{(1)}(\k),\\
 &\psi^{(i)}(\k,\xi)=1-\z^{(i)}(\k,\xi),\qquad i=2,3.
 \end{aligned}
 \end{equation*}
 Let  $\fF$ denote the Fourier transform in $\R^N$. We have 
 \begin{equation} \label{e5.16'}
 \begin{aligned}
 \fF f_n&=\z^{(1)} \fF f_n+\psi^{(1)}\fF f_n\\
            &=\z^{(1)} \fF f_n+\psi^{(1)} \z^{(2)}\fF f_n+\psi^{(1)}\psi^{(2)} \fF f_n\\
            &=\z^{(1)}\fF f_n+  \psi^{(1)} \z^{(2)}\fF f_n+\psi^{(1)}\psi^{(2)} \z^{(3)} \fF f_n+ \psi^{(1)}\psi^{(2)} \psi^{(3)} \fF f_n\\
            &=: \fF f_n^{(1)} +\fF f_n^{(2)}+ \fF f_n^{(3)} +\fF f_n^{(4)}. 
    \end{aligned}
\end{equation}
Observe that $\fF f_n(\k,\xi)=0$, is $\xi\notin [-L,L]$, for all $\k\in\R^N$.  Let us also denote
\begin{equation} \label{e5.16''}
\begin{aligned}
&\int_{-L}^L  f_n^{(1)}\,d\xi=: v^{(1)},\\
&\int_{-L}^L  f_n^{(2)}\,d\xi=: v^{(2)},\\
&\int_{-L}^L  f_n^{(3)}\,d\xi=: v^{(3)},\\
&\int_{-L}^L  f_n^{(4)}\,d\xi=: v^{(4)}.
\end{aligned}
\end{equation}
Thus, 
\begin{equation}\label{e5.16'''}
\int_{-L}^L f_n\, d\xi= v^{(1)}+  v^{(2)}  +  v^{(3)} +  v^{(4)}.
\end{equation}

 Concerning $v^{(1)}$, we first observe that, since $ \sup_{\xi\in\R}  \|f_n(\cdot,\xi)\|_{L^1(\R^N)}\le C$, for some constant $C>0$ independent of $n$. Therefore, it follows that $ \sup_{\xi\in\R}\|\fF f_n(\cdot,\xi)\|_{L^\infty(\R^N)}\le C$. Thus, by Cauchy-Schwarz inequality,   we have
 \begin{equation*}
 \begin{aligned}
 \int_{\R^N} |v_n^{(1)} (x)|^2\,dx &\le 2L\int_{-L}^L\int_{\R^N} |\z^{(1)}(\k)|^2 |\fF f_n(\k,\xi)|^2\,d\k\,d\xi\\
                                           &\le 4L^2  C \int_{\R^N}  |\z^{(1)}(\k)|^2   \,d\k \\
                                           &\le  \tilde C \g^{N},\\
                                           \end{aligned}
  \end{equation*}
 for some constant $\tilde C>0$ independent of $n$ and $\g$. 
 
  Concerning $v^{(2)}$, we have
  \begin{equation*}
  \begin{aligned}
  \fF v_n^{(2)} (\k) &= \int_{\R}  \fF f_n^{(2)}(\k,\xi)\,d\xi\\
  &=\int_{\R}  \psi^{(1)}(\k)\z^{(2)}(\k,\xi) \fF f_n (\k,\xi)\,d\xi.
  \end{aligned}
  \end{equation*}
  By Plancherel identity and Cauchy-Schwarz 
  inequality, we have 
 \begin{equation*}
 \begin{aligned}
 \|v_n^{(2)}\|_{L^2(\R^N)}^2 &=\|\fF v_n^{(2)}\|_{L^2(\R^N)}^2\\
 &\le \int_{\R^N}\left(\int_{-L}^L  1_{\{|\tilde\a'(v)\cdot \tilde\k'|\le 2\d|\tilde\k'| \}}(\xi)\,d\xi\right)\\
 &\qquad\qquad \left(\int_\R \left|\psi^{(1)}(\k) \z^{(2)}(\k,\xi)\fF f_n(\k,\xi)\right|^2\,d\xi\right)\, d\k\\
 &\le \sup_{|\tilde\k'|=1}\left|\{\xi\in[-L,L] \,:\,|\tilde\a'(\xi)\cdot\tilde\k'| \le  2\d\}\right| \|f_n\|_{L^2(\R^N\X\R)}^2,
 \end{aligned}
 \end{equation*}
 where we denote by $|\{\cdots\}|$ the Lebesgue measure of $\{\cdots\}$.  Now, define the functions $h_\d: \mathbb{S}^{d'}:=
 \{|\tilde\k'|=1\}\to \R$  by
 $$
 h_\d(\tilde\k')= \left|\{\xi\in [-L,L]  \,:\,|\tilde\a'(\xi)\cdot\tilde\k'| \le  2\d\}\right|.
 $$ 
It is easy to check that $h_\d$ is continuous on $\mathbb{S}^{d'}$ and that $h_{\d_1}(\tilde\k')\le h_{\d_2}(\tilde\k')$, if $\d_1\le \d_2$, for all $\tilde\k'\in\mathbb{S}^{d'}$. Therefore, because of
the nondegeneracy condition \eqref{e5.nondeg} we deduce that $\sup_{|\tilde\k'|=1}  h_d(\tilde\k')\to 0$, as $\d\to0$. Therefore, we may write
\begin{equation}\label{e5.16iv}
 \|v_n^{(2)}\|_{L^2(\R^N)}^2 \le O(\d),
 \end{equation}
 where $O(\d)\to0$ as $\d\to0$, uniformly with respect to $n$.

 Similarly, for $v^{(3)}$, we have 
  \begin{equation*}
 \begin{aligned}
 \|v_n^{(3)}\|_{L^2(\R^N)}^2 &=\|\fF v_n^{(3)}\|_{L^2(\R^N)}^2\\
 &\le \int_{\R^N}\left(\int_{-L}^L 1_{\{\b(v)(\k'',\k'')\le 2\d|\k''|^2 \}}(\xi) \,d\xi\right)\\
 &\qquad\qquad \left(\int_\R \left|\psi^{(1)}(\k)\psi^{(2)}(\k,\xi) \z^{(3)}(\k,\xi)\fF f_n(\k,\xi)\right|^2\,d\xi\right)\, d\k\\
  &\le \sup_{|\k''|=1}\left|\{\xi\in[-L,L] \,:\,\b(\xi)(\k'',\k'') \le  2\d\}\right| \|\phi\|_{L^\infty(\R)}^2 \|f_n\|_{L^2(\R^N\X\R)}^2,
 \end{aligned}
 \end{equation*}  
  where we  have used again Plancherel identity and Cauchy-Schwarz inequality, and again by a reasoning similar to that used for $v_n^{(2)}$ we arrive at 
  \begin{equation}\label{e5.16v}
   \|v_n^{(3)}\|_{L^2(\R^N)}^2 \le O(\d),
 \end{equation}
 where $O(\d)\to0$ as $\d\to0$,  uniformly with respect to $n$.
 
 Let us now consider $v_n^{(4)}$.  Let $\z\in C_c^\infty(\R)$ be such that $\z(\xi)=1$, for $\xi\in[-L,L]$, and $\z(\xi)=0$ for $\xi>L_0$, for some $L_0>L$.  We then have
 \begin{equation}\label{e5.16vi}
 v_n^{(4)}= \int_{\R} \z(\xi) \fF^{-1}\Big( \psi^{(1)}(\k)\psi^{(2)}(\k,\xi)\psi^{(3)}(\k,\xi)\fF f_n\Big) (y,\xi) \, d\xi
 \end{equation}
 Since $\psi^{(2)}\psi^{(3)}$ vanishes on the null set  of the symbol 
 $$
 \LL(\k,\xi):= i\tilde\a'(\xi)\cdot \tilde\k' +\b(\xi)(\k'',\k'')
 $$ 
 we may use the equation \eqref{e5.14} to write
\begin{equation*}
  \psi^{(1)}(\k)\psi^{(2)}(\k,\xi)\psi^{(3)}(\k,\xi)\fF f_n= \frac{\psi^{(1)}(\k)\psi^{(2)}(\k,\xi)\psi^{(3)}(\k,\xi) (1+|\k|^2)^{1/2} }{\LL(\k,\xi)} (1-\po_\xi^2) \fF h_n.
  \end{equation*}  
      
 We now prove that $v_n^{(4)}$ is relatively compact in $L_\loc^1$. We have
 \begin{equation}\label{e5.16vi'}
v_n^{(4)}=  \int_{\R} \z(\xi) \fF^{-1}\Big( \frac{\psi^{(1)}(\k)\psi^{(2)}(\k,\xi)\psi^{(3)}(\k,\xi) (1+|\k|^2)^{1/2} }{\LL(\k,\xi)} (1-\po_\xi^2) \fF h_n\Big) (y,\xi) \, d\xi.
\end{equation} 
Performing an integration by parts in \eqref{e5.16vi'} we obtain
\begin{equation} \label{e5.16vii}
\begin{aligned}
 & v_n^{(4)}= \\
&\int_{\R} (\z(\xi)-\z''(\xi)) \fF^{-1}\Big( \frac{\psi^{(1)}(\k)\psi^{(2)}(\k,\xi)\psi^{(3)}(\k,\xi) (1+|\k|^2)^{1/2}}{\LL(\k,\xi)}  \fF h_n \Big) \,d\xi \\
&+\int_{\R}\z'(\xi) \fF^{-1} \Big(  \po_\xi \left[ \frac{\psi^{(1)}(\k)\psi^{(2)}(\k,\xi)\psi^{(3)}(\k,\xi) (1+|\k|^2)^{1/2}}{\LL(\k,\xi)}\right] \fF h_n \Big) \,d\xi\\
&-\int_{\R}\z(\xi) \fF^{-1} \Big(  \po_\xi^2 \left[ \frac{\psi^{(1)}(\k)\psi^{(2)}(\k,\xi)\psi^{(3)}(\k,\xi) (1+|\k|^2)^{1/2}}{\LL(\k,\xi)}\right] \fF h_n \Big) \,d\xi
\end{aligned}
\end{equation}
So, let us define
$$
\begin{aligned}
m_1(\k,\xi)&:=  \frac{\psi^{(1)}(\k)\psi^{(2)}(\k,\xi)\psi^{(3)}(\k,\xi) (1+|\k|^2)^{1/2}}{\LL(\k,\xi)} \\
m_2(\k,\xi)&:= \po_\xi \left[ \frac{\psi^{(1)}(\k)\psi^{(2)}(\k,\xi)\psi^{(3)}(\k,\xi) (1+|\k|^2)^{1/2}}{\LL(\k,\xi)}\right] \\
m_3(\k,\xi)&:= \po_\xi^2 \left[ \frac{\psi^{(1)}(\k)\psi^{(2)}(\k,\xi)\psi^{(3)}(\k,\xi) (1+|\k|^2)^{1/2}}{\LL(\k,\xi)}\right] 
\end{aligned}
$$ 
We are going to show that $m_1(\k,\xi)$, $m_2(\k,\xi)$  and $m_2(\k,\xi)$ are $L^p$ multipliers in $\R^N$, uniformly in $\xi\in[-L_0,L_0]$. 
For that, we are going to apply the multidimensional extension of Marcinkiewicz multiplier theorem as stated in \cite{Duo}, chapter 8:
Let $m$ be differentiable in all quadrants of $\R^n$ and satisfy
$$
\sup_{i_1,\cdots,i_k}\int_{I_{i_1}\X\cdots\X I_{i_k}}\left|\frac{\po^k m}{\po\k_{i_1}\cdots\po\k_{i_k}}(\k)\right|\,d\k_{i_1}\cdots\,d\k_{i_k}<\infty,
$$
where the $I_j$'s are dyadic intervals in $\R$ and the set $\{i_1,\cdots,i_k\}$ runs over all the subsets $\{1,\cdots, n\}$ containing $k$ elements, $1\le k\le n$.  Clearly, it suffices to show that
$$
\sup_{\k\in\R^n} \k_{i_1}\cdots\k_{i_k} \left|\frac{\po^k m}{\po\k_{i_1}\cdots\po\k_{i_k}}(\k)\right| <\infty,
$$
for all such $\{i_1,\cdots,i_k\}$. 

First, we observe that 
$$
|m_1(\k,\xi)|\le C,
$$
with $C>0$ independent of $(\k,\xi)$. This follows from the fact that in the region where $|\k|>\g$,  $|\tilde\a'(\xi)\cdot \tilde\k'|> \d|\tilde\k'|$ and $\b (\xi)(\k'',\k'')>\d|\k''|^2$, it is not difficult to check that
$|\LL(\k,\xi)|\ge C|\k|$, for some constant $C>0$. So the boundedness for $m_1$ follows from the boundedness of $\psi^{(1)}\psi^{(2)}\psi^{(3)}$. Similarly, using the same reasoning, 
and the fact that the $\xi$-derivatives of $\psi^{(2)}(\k,\xi)$ and $\psi^{(3)}(\k,\xi)$ are uniformly bounded in $(\k,\xi)$, we also deduce that   
$$
|m_2(\k,\xi)|\le C,\quad |m_3(\k,\xi)|\le C,
$$
with $C>0$ independent of $(\k,\xi)$.  
 
 Now, let us analyze $\k_{i} \po_{\k_i}m_1$, for $i\in \{0,1,\cdots,N'+N''\}$. We claim that these expressions are bounded. First, if the derivative hits $\psi^{(1)}$ then the boundedness is clear since $\po_{\k_i}\psi^{(1)}$ has support in $|\k|\le 2\d$. On the other hand,
 if $\k_i\in \{0,\cdots,N'\}$ and the derivative hits $\psi^{(2)}$, then it is easy to see also that the expression is bounded since the derivation of the argument of $\psi^{(2)}$ multiplied by $\k_i$ is bounded. Also, if the derivative $\po_{\k_i}$ hits $1/\LL(\k,\xi)$ then it is clear that 
 the derivative of $1/\LL(\k,\xi)$ multiplied by $\k_i$ is bounded, where  we use that the support of $m_1$ is in a region where 
 $|\k|>\g$,   $|\tilde\a'(\xi)\cdot \tilde\k'|> \d|\tilde\k'|$ and $\b (\xi)(\k'',\k'')>\d|\k''|^2$. Analogously, we see that if $i\in\{N'+1,\cdots, N'+N''\}$,
 $\k_{i} \po_{\k_i}m_1$ is bounded uniformly in $(\k,\xi)\in\R^N\X\R$.  Similarly,  we prove $\k_{i} \po_{\k_i}m_1$, $i=0,\cdots,N$,  is bounded uniformly in $(\k,\xi)\in\R^N\X\R$.  In this way, we may check the hypotheses of the extended Marcinkiewcz multiplier theorem for $m_i$, $i=1,2,3$,  and conclude that they are satisfied uniformly for $\xi\in[-L_0,L_0]$. Hence, we deduce that
 \begin{equation}\label{efinal}
 \|v_n^{(4)}\|_{L^p(\R^N)}\le C\|\z\|_{C^2([-L_0,L_0])} \|h_n\|_{L^p(\R^N\X\R)}.
 \end{equation}
 Therefore, we conclude the proof of the lemma as follows. Given $\ve>0$ we may choose $\gamma>0$ and $\d>0$ such that 
 $\|v_n^{(i)}\|_{L^2(\R^N)}<\ve$, $i=1,2,3$, uniformly in $n\in\N$.  Then, making $n\to\infty$ we get that $v_n^{(4)}$ converges to 0 in $L^p(\R^N)$. Then, since $\ve>0$  is arbitrary, we see that $v^{(i)}\to0$, in $L_\loc^1(\R^N)$, for $i=1,2,3$, which concludes the proof.              
 
 \end{proof}

Step \#5. In this step one provides a characterization of the limit function $\tilde f_\infty$. The next claim is the same as proposition~4 in \cite{Va} and its proof is also identical to that of proposition~4 in \cite{Va}.  We first observe that,
by  \eqref{e5.13},   $\tilde a^0(\hat y,\xi)\tilde f_\infty$ lies in $C^0(\R_+,W^{-2,\infty}(\R^d\X(-L,L)))$.  We also recall that  $\tilde a^0(\hat y,\xi)\ne0$, for a.e.\ $\xi\in\R$, due to \eqref{e1.nondeg},  as already observed.

{\em Claim \#6}. For every $\hat y\in\E'$ we have
\begin{equation}\label{e5.17}
\tilde f_\infty(0,\un{\hat y},\xi)=f^\tau(\hat y,\xi)
\end{equation}
for almost all $(\un{\hat y},\xi)\in\R^d\X(-L,L)$. In particular, $\tilde f_\infty(0,\cdot,\cdot)$ does not depend on $\un{\hat y}$. 

\bigskip

For a fixed $\xi$, so that $\tilde a_0(\hat y,\xi)\ne 0$ and $B'(\xi)>0$,  we make the following change of coordinates in \eqref{e5.13}:  $\un{z}_0=\tilde a_0(\hat y,\xi) \un{y}_0$, $\un{\hat z}'=\un{\hat y}'- \hat a'(\xi) \un{z}_0$, $\un{\hat z}''=\un{\hat y}''$. We then  see that $\tilde f_\infty$ satisfies 
\begin{equation}\label{e5.finfty}
\po_{\un{z}_0}\tilde f_\infty- B'(\xi):\nabla_{\un{\hat z}''}^2\tilde f_\infty=0.
\end{equation}
We observe that $\un{z}_0$ is positive or negative depending on whether $\tilde a_0(\hat y,\xi)$ is positive or negative. In any case, local smooth regularity  of any solution of \eqref{e5.finfty} is guaranteed. Since, by Claim~\#5, $f_\infty$ is a $\chi$-function,
we deduce that it cannot depend on ${\hat z}''$ which, together with \eqref{e5.17}, gives\footnote{The authors thank J.F.~Nariyoshi for the idea of this argument using regularity.} 
$$
\tilde f_\infty (\un{y},\xi)=f^\tau(\hat y,\xi),
$$
for almost all $(\un{y},\xi)\in\R^{d+1}\X(-L,L)$, which is constant with respect to $\un{y}$. Hence, since $\tilde f_\infty$ is a $\chi$-function for almost all $\un{y}$, we finally arrive at the following conclusion.

{\em Claim \#6}. For every $\hat y\in\E'$ the function $f^\tau(\hat y,\cdot)$ is a $\chi$-function.

The conclusion of the proof of Theorem~\ref{T:5.1} is now exactly as the conclusion of the proof of theorem~1 in \cite{Va}, the difference here being that we are dealing with deformations $\Psi:\Gamma_T'\X[0,1]\to\bar U_T$ satisfying \eqref{e5.Psi}, where $\Psi'$ is a strongly regular deformation of $\po\Om'$.

\end{proof}

\section{Existence of solution to \eqref{e1.1}--\eqref{e1.4}} \label{S:4}

In this section we prove the existence of entropy solutions to the problem \eqref{e1.1}--\eqref{e1.4}, according to Definition~\ref{D:1.1}. 

\begin{theorem}\label{T:4.1} There exists an entropy solution to the problem \eqref{e1.1}--\eqref{e1.4}. 
\end{theorem}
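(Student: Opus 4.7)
The plan is to construct $u$ as an a.e.\ limit of classical solutions $u^\epsilon$ of the uniformly parabolic regularization
\begin{equation*}
u^\epsilon_t + \nabla\cdot\bff(u^\epsilon) = \nabla_{x''}\cdot(B'(u^\epsilon)\nabla_{x''}u^\epsilon) + \epsilon\,\Delta_{x'}u^\epsilon \quad\text{in }U_T,
\end{equation*}
coupled with a smoothed initial datum $u_0^\epsilon\to u_0$, the Dirichlet datum $a_0$ on $\Gamma''$, and the mixed flux condition $\epsilon\,\nabla_{x'}u^\epsilon\cdot\nu'+\bff(u^\epsilon)\cdot\nu=0$ on $\Gamma'$ (so that integration by parts against test functions not vanishing on $\Gamma'$ produces no boundary contribution). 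Classical parabolic theory produces a smooth $u^\epsilon$, and the maximum principle, combined with \eqref{e1.f} and $a_0\in[u_{\min},u_{\max}]$, confines $u^\epsilon$ to $[u_{\min},u_{\max}]$, uniformly in $\epsilon$.

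The next step is to derive uniform estimates and extract a strongly convergent subsequence. Testing the regularized equation against $u^\epsilon - a_0$, where $a_0$ is extended to the interior as in \eqref{e1.a00}, and integrating by parts, yields the bounds
\begin{equation*}
\sqrt\epsilon\,\nabla_{x'}u^\epsilon \text{ and } \nabla_{x''}b(u^\epsilon) \text{ bounded in } L^2(U_T),
\end{equation*}
uniformly in $\epsilon$. Writing the kinetic formulation (as in Step \#1 of the proof of Theorem~\ref{T:5.1}) with a non-negative defect measure that now absorbs the viscous contribution, one invokes averaging Lemma 2.3 of \cite{TT}, whose non-degeneracy hypotheses are exactly those encoded by \eqref{e1.TT}. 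The velocity averages of $\chi(\xi;u^\epsilon)$ then gain a uniform fractional Sobolev regularity in $(t,x)$, and the equivalence recalled in Step \#2 of the proof of Theorem~\ref{T:5.1} delivers a subsequence, still denoted $u^\epsilon$, converging a.e.\ on $U_T$ to some $u\in L^\infty(U_T)$ with $\nabla_{x''}b(u)\in L^2(U_T)$.

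Passing to the limit in the interior entropy inequality \eqref{e1.8E} is then routine: multiply the regularized equation by $\sgn_\d(u^\epsilon-k)\varphi$, integrate, drop the non-negative viscous dissipation term $\epsilon|\nabla_{x'}u^\epsilon|^2\,\sgn_\d'(u^\epsilon-k)\varphi$, and send $\epsilon\to 0$ using a.e.\ convergence and Fatou's lemma applied to the non-negative quadratic form in \eqref{e1.b2}, before finally letting $\d\to 0$. The Neumann condition \eqref{e1.8N} is preserved in the limit because the $\epsilon$-flux condition kills the hyperbolic boundary contribution and the viscous boundary contribution vanishes by the $L^2$-bound on $\sqrt\epsilon\,\nabla_{x'}u^\epsilon$ combined with strong $L^1_\loc$ convergence. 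The initial condition \eqref{e1.10} follows from uniform control of $\partial_t u^\epsilon$ in a suitable negative Sobolev norm together with the a.e.\ convergence.

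The main obstacle is the verification of the Dirichlet condition (iv) on $\Gamma''$, namely \eqref{e1.9}, \eqref{e1.9'} and \eqref{e1.8D'}. Adapting the strategy of \cite{MPT} to the present mixed setting, one works chart-by-chart on each $B''\in\B''$ via the extension \eqref{e1.a00}, testing the $u^\epsilon$-entropy inequality against $\sgn_\d(u^\epsilon-a_0)\tilde\varphi$ for \eqref{e1.9} and against the corresponding triple-entropy approximation for \eqref{e1.9'}. Since $u^\epsilon=a_0$ on $\Gamma''$, the boundary contributions there vanish for $\epsilon>0$; the terms involving $\nabla a_0$ are absorbed via \eqref{e1.a0}, generating precisely the constant $C_*$ and the measure $\mu_0$ on the right-hand side. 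Either of the structural hypotheses \textbf{(C)} or \textbf{(C')} is decisive at this point to handle the cross-term $\sgn_\d'(u^\epsilon-a_0)\,\nabla_{x''}(u^\epsilon-a_0)\cdot B'(u^\epsilon)\nabla_{x''}a_0$ arising from integration by parts: under \textbf{(C')} it vanishes identically since $\nabla_{x''}a_0=0$, while under \textbf{(C)} the diagonal form of $B'$ rewrites it as a product controlled in $L^2$ by $\nabla_{x''}b(u^\epsilon)$. Finally, \eqref{e1.8D'} is inherited from the uniform $L^2((0,T);H^1(\Om''))$-bound on $b(u^\epsilon)-b(a_0)$, which vanishes on $\Gamma''$ by construction, so a standard weak-lower-semicontinuity argument identifies the limit in the required class.
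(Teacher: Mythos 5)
Your overall strategy (vanishing viscosity approximation, uniform $L^\infty$ and energy bounds, kinetic formulation plus the Tadmor--Tao averaging lemma for strong compactness, boundary layer sequences for the Dirichlet part, conditions {\bf(C)}/{\bf(C')} to control the anisotropic diffusion) is essentially the paper's. However, there is a genuine gap at the very first step, the choice of regularization. You add viscosity only in the $x'$ directions, regularizing to
\[
u^\epsilon_t + \nabla\cdot\bff(u^\epsilon) = \nabla_{x''}\cdot\bigl(B'(u^\epsilon)\nabla_{x''}u^\epsilon\bigr) + \epsilon\,\Delta_{x'}u^\epsilon,
\]
and call this ``uniformly parabolic,'' but it is not: $B'(u)$ is only non-negative and degenerates wherever $b'(u)=0$, so the second-order form in the $x''$ variables can vanish, and this equation is again a degenerate parabolic problem whose classical solvability is exactly what one cannot take for granted. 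The paper instead adds the full Laplacian $\epsilon\Delta u$ (equation \eqref{e4.1}), which makes the operator genuinely uniformly parabolic so that the Ladyzhenskaya--Solonnikov--Ural'tseva theory cited in \cite{LSU} applies, and then supplements the boundary condition on $\Gamma'$ with the $x'$-component of the full viscous flux. Without the $\epsilon\Delta_{x''}$ piece the claim ``classical parabolic theory produces a smooth $u^\epsilon$'' does not hold, and the subsequent estimates do not even begin.

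A second, smaller concern is the initial condition \eqref{e1.10}. You invoke uniform control of $\partial_t u^\epsilon$ in a negative Sobolev norm plus a.e.\ convergence, but this only yields a weak continuity of $u$ at $t=0$; upgrading this to the $L^1$ convergence required by \eqref{e1.10} needs an additional Kruzhkov-type or normal-trace argument. The paper handles this carefully in Step \#7 by viewing $(|u-k|,-K_{x''}(u,k))$ as a $\DM^2$ field and applying the normal trace formula \eqref{e11}--\eqref{e2.3''}. Your sketch of the Dirichlet part and of the role of {\bf(C)}/{\bf(C')} is in the right spirit (identifying the problematic cross-term and noting that it vanishes under {\bf(C')}), though the paper's actual mechanism is the distributional Kato-type inequality
$\sgn(u^\ve-a_0)\nabla_{x''}^2:(B(u^\ve)-B(a_0))\le \nabla_{x''}^2:\bigl(\sgn(u^\ve-a_0)(B(u^\ve)-B(a_0))\bigr)$,
valid under either condition, combined with the level-set boundary layer sequences and the sign information from Lemma~\ref{L:1.2}; your $L^2$-control claim under {\bf(C)} would need more care because the cross-term carries a $\sgn_\d'$ factor that is $O(1/\d)$.
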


\begin{proof} As usual, we approximate the solution of \eqref{e1.1}--\eqref{e1.4} by solutions of the following corresponding problem for a non-degenerate parabolic equation
\begin{align}
&u_t+\nabla\cdot \bff(u)= \nabla_{x''}\cdot (B'(u)\nabla_{x''}u)+\ve\Delta u, \quad (t,x)\in (0,\infty)\X\Om,\label{e4.1} \\
&u(0,x)=u_0(x), \quad x\in\Om,\label{e4.2}\\
&u(t,x)=a_0(x), \quad x\in \Om'\X\po\Om'', \ t>0, \label{e4.3}\\
& (\bff(u(t,x))-\ve\nabla u)\cdot \nu(x)= 0,\quad x\in \po\Om'\X\Om'',\ t>0. \label{e4.4}
\end{align}
Problems of the type of \eqref{e4.1}--\eqref{e4.4} are solved in detail in \cite{LSU}.  We denote by $u^\ve(t,x)$ the solution of \eqref{e4.1}--\eqref{e4.4}.  We divide the rest of the proof into seven steps. 

\medskip
Step \#1. We claim that $u_{\min}\le u^\ve(t,x)\le u_{\max}$. 

Indeed, the solution $u^\ve$ of \eqref{e4.1}--\eqref{e4.4} can be obtained as uniform limit as $\mu\to0$ of solutions  of the problem  
\begin{align}
&u_t+\nabla\cdot \bff(u)= \nabla_{x''}\cdot( B'(u)\nabla_{x''} u)+\ve\Delta u+\mu h(u), \quad (t,x)\in (0,\infty)\X\Om,\label{e4.1mu} \\
&u(0,x)=u_0^\mu(x), \quad x\in\Om,\label{e4.2mu}\\
&u(t,x)=a_0^\mu (x), \quad x\in \Om'\X\po\Om'', \ t>0, \label{e4.3mu}\\
& (\bff(u(t,x))-\ve\nabla u)\cdot \nu(x)= \mu(u-u^b(t,x)),\quad x\in \po\Om'\X\Om'',\ t>0. \label{e4.4mu}
\end{align}
where $h(u)$ is a smooth function satisfying $h(u_{\min})>0$, $h(u_{\max})<0$, $u^b(t,x)$ is any continuous function on $\po\Om'\X\Om''$ assuming values in $[u_{\min}+\mu,
u_{\max}-\mu]$. Moreover, we assume that $a_0^\mu$ is a sequence of smooth functions over $\Om'\X\po\Om''$ assuming values in $[u_{\min}+\mu,u_{\max}-\mu]$ converging uniformly to $a_0$ over $\Om'\X\po\Om''$. Finally, we assume that $u_0^\mu$ are smooth functions converging in $L^1(\Om)$ to $u_0$ as $\mu\to0$ and assuming values in 
$[u_{\min}+\mu, u_{\max}-\mu]$.    It suffices then to prove that the solutions $u^{\ve,\mu}$  of \eqref{e4.1mu}--\eqref{e4.4mu} satisfy $u_{\min}\le u^{\ve,\mu}(t,x) \le u_{\max}$, for all $(t,x)\in U_T$. The latter is proven as follows.

If the assertion is not true, because of the hypotheses on $u_0^\mu$ and $a_0^\mu$, there is a  time $t_*\in(0,T)$ in which $u^{\ve,\mu}$ assumes one of the values $u_{\min}$ or $u_{\max}$ for the first time. Let $u^{\ve,\mu}(x_*,t_*)\in\{u_{\min},u_{\max}\}$. We easily see that $x_*\notin\Om$, since otherwise we get a contradiction using the equation \eqref{e4.1mu}, as usual. It remains the possibility that $x_*\in\po\Om'\X\Om''$. However, this would also lead us to a contradiction. Indeed, we have $\pi_{d'}(\bff)(u_{\min})=
\pi_{d'}(\bff)(u_{\max})=0$ and,  from \eqref{e4.4mu}, in the case $u^{\ve,\mu}(t_*,x_*)=u_{\min}$, for  $x_*\in\po\Om'\X\Om''$, we get $\nabla u^{\ve,\mu}(x_*,t_*)\cdot\nu(x_*)>0$, which  is a contradiction since $\nu(x_*)$ is  outward
 pointing and $u^{\ve,\mu}$ cannot assume values less than $u_{\min}$ by assumption. A similar reasoning shows that we get a contradiction if  $u^{\ve,\mu}(t_*,x_*)=u_{\max}$, for  $x_*\in\po\Om'\X\Om''$.

\medskip
 Step \#2. We claim that  $\nabla_{x''}b(u^\ve)$ is bounded in $L^2(U_T)$, uniformly in $\ve>0$.  Also, for any $B''$ as in Definition~\ref{D:1.1},   for all $\tilde \varphi \in C_0^\infty((0,T)\X \Om'\cap B'')$,  for all $\ve>0$, we have 
\begin{multline}\label{e4.H0}
\int_{\Om'}|b(u^\ve(t,x',x''))-b(a_0(x',x''))|\tilde\varphi(t,x',x'') \,dx'\\ \in \text{bounded subset of $L^2((0,T); H_0^1(\Om''))$}.
\end{multline}
Moreover,
\begin{equation}\label{e4.epsux}
\ve^{1/2} |\nabla u^\ve|\in \text{ bounded set in $L^2(U_T)$}.
\end{equation}

 For simplicity we consider an extension $\tilde a_0$ of $a_0$ to $\bar\Om$, such that $\tilde a_0\in C^2(\bar\Om)$.    We first multiply \eqref{e4.1} by $u^\ve-\tilde a_0$, and write
\begin{gather*}
(\frac12u^{\ve2})_t-(\tilde a_0u)_t+\nabla\cdot {\bf g}(u^\ve)-\nabla\cdot (\tilde a_0\bff(u^\ve))+\nabla \tilde a_0\cdot\bff(u^\ve)\\
= (u^\ve-\tilde a_0)\nabla_{x''}\cdot (B'(u^\ve)\nabla_{x''}u)+\ve (u^\ve-\tilde a_0)\Delta(u^\ve-\tilde a_0)\\
-(u^\ve-\tilde a_0)(\Delta_{x''}b(\tilde a_0)+\ve\Delta\tilde a_0),
\end{gather*}
where $d{\bf g}/du=u(d\bff/du)$. We then integrate the above equation over $U_T$, use integration by parts in the integral of the terms in the second line,  applying \eqref{e4.3} and \eqref{e4.4}, to get that all boundary terms are bounded, and so we obtain after routine manipulations
$$
\int_{U_T} \ve |\nabla u^\ve|^2\,dx\,dt\le C,
$$
and 
$$
\int_{U_T} B'(u^\ve)\nabla_{x''}u^\ve\nabla_{x''} u^\ve\le C,
$$
for some constant $C$ depending only on $\|u^\ve\|_\infty,\bff, b, \|\tilde a_0,\nabla\tilde a_0,\Delta \tilde a_0\|_\infty, \Om$. The first of these inequalities is \eqref{e4.epsux}. {}From the second, using \eqref{e1.b1}, it follows that $\nabla_{x''}b(u^\ve)$ is bounded in $L^2(U_T)$ uniformly in $\ve>0$, and, so, also \eqref{e4.H0} follows.

\medskip
Step \#3. The solution $u^\ve$ of \eqref{e4.1}--\eqref{e4.4} satisfies for all $0\le \varphi\in C_0^\infty(U_T)$, 
\begin{multline}\label{e4.5E}
\int_{U_T}\{\eta_\d(u^\ve-k)\po_t\varphi-K_{x''}^\d(u^\ve,k)\cdot\nabla \varphi -\ve\nabla \eta_\d(u^\ve-k)\cdot\nabla \varphi \}\,dx\,dt 
\\ \ge \int_{U_T}\sgn_\d'(u^\ve-k)\sum_{k=d'+1}^d\left(\sum_{i=d'+1}^d\po_{x_i}\b_{ik}(u^\ve)\right)^2\varphi\,dx\,dt,
\end{multline}
where $\sgn_\d(u)=\sgn(u)$, for $|u|>\d$, and $\sgn_d(u)= \cos(\pi u/\d)$, for $|u|\le \d$.  $\eta_\d'(u)=\sgn_\d(u)$,  $K_{x''}^\d(u,v):=  \nabla_{x''}\cdot \Bbf^\d(u,v) -F^\d(u,v)$, with $\po_u F^\d(u,v)=\sgn_\d(u-v)\bff'(u)$,
$\Bbf^\d(u,v)=(\int_v^u \sgn_\d(s-v)b_{ij}'(u)\, ds)_{i,j=1}^d$.

The proof of \eqref{e4.5E} is by now somewhat standard. Namely, we multiply \eqref{e4.1} by $\sgn_\d(u^\ve-k)$, use chain rule, multiply by a test function $0\le \varphi\in C_0^\infty(U_T)$, integrate by parts, use \eqref{e1.bij}, etc.    We also use the  relation   $\sgn_\d(u^\ve-k)\Delta u^\ve\le \Delta\eta_\d(u^\ve-k)$, which follows immediately from the convexity of $\eta_\d$ .  In this way we obtain \eqref{e4.5E}.

\medskip
Step \#4.  (Compactness) We claim that $u^\ve$ converges in $L_{\loc}^1(U_T)$ to some function $u(t,x)$ as $\ve\to0$, which satisfies \eqref{e1.8E-1}, \eqref{e1.8E}, \eqref{e1.8N}, \eqref{e1.7} and \eqref{e1.8D'}.

As in Step~\#1 of the proof of Theorem~\ref{T:5.1}, defining
$$
 f^\ve(t,x,\xi)=\chi(\xi; u^\ve(t,x)),\quad \text{where}\quad \chi(\xi;u):=\begin{cases} -1, \quad u\le \xi< 0,\\ 1,\quad  0<\xi \le u,\\ 0,\quad |\xi|>|u|,\end{cases}
 $$
we prove that $f^\ve(t,x,\xi)$  satisfies
 \begin{equation} \label{e4.6}
 \po_t f^\ve+\bfa(\xi)\cdot\nabla f^\ve-B'(\xi):\nabla_{x''}^2 f^\ve-\ve\Delta f^\ve=\po_{\xi} m^\ve,
 \end{equation}
 in the sense of distributions in $\DD'(U_T\X(-L,L))$, with $\bfa(\xi)=\bff'(\xi)$, for $m^\ve\in\M_+(U_T\X(-L,L))$ with  total variation uniformly bounded in compacts with respect to $\ve$. Also, using Step~\#2 and hypothesis \eqref{e1.a''}, we indeed have that 
 $f^\ve$ satisfies
 \begin{equation} \label{e4.6'}
 \po_t f^\ve+\bfa'(\xi)\cdot\nabla_{x'} f^\ve-B'(\xi):\nabla_{x''}^2 f^\ve-\ve\Delta f^\ve=\po_{\xi} \tilde m^\ve,
 \end{equation}
 with $\bfa'(\xi)=(a_1(\xi),\cdots,a_{d'}(\xi))$ and $\tilde m^\ve\in\M_+(U_T\X(-L,L))$ with  total variation uniformly bounded in compacts with respect to $\ve$. 
 We can then apply the Lemma~\ref{L:5.1}  to obtain the compactness of $u^\ve$ is $L_{\loc}^1(U_T)$.    By extracting a subsequence still denoted by $u^\ve$, we obtain $u\in L^\infty(U_T)$
 such that $u^\ve\to u$ in $L_{\loc}^1(U_T)$.  
 The fact that $u$ satisfies \eqref{e1.8N} follows by passing to the limit as $\ve\to0$ in  
  \begin{equation}\label{e4.7N}
\int_{U_T}\{ u^\ve\po_t\tilde\phi +\bff(u^\ve)\cdot\nabla \tilde\phi- \nabla_{x''}\cdot B(u^\ve)\cdot\nabla_{x''}\tilde\phi -\ve\nabla u^\ve\cdot\nabla \tilde\phi\}\,dx\,dt=0,
\end{equation}
for all $\tilde \phi\in C_0^\infty((0,T)\X\R^{d'}\X\Om'')$, which is trivially obtained from \eqref{e4.1} and \eqref{e4.4}, observing that $\nabla_{x''} b_{ij}(u^\ve)$ weakly converges in $L^2(U_T)$ to $\nabla_{x''}b_{ij}(u)$, since this is true in the sense of distributions and $\nabla_{x''}b_{ij}(u^\ve)$ is weakly compact in $L_{\loc}^2(U_T)$. 
To prove that $u$ satisfies \eqref{e1.8E-1}, we first prove that $u^\ve$ satisfies an inequality similar to \eqref{e4.5E} with $\eta(u^\ve)$ instead of $\eta_\d(u^\ve-k)$, which is proved exactly in  the same way. We then pass to the limit when $\ve\to 0$ and use the inequality
\begin{multline*}
\liminf_{\ve\to0} \int_{U_T}\eta''(u^\ve)\sum_{k=d'+1}^d\left(\sum_{i=d'+1}^d\po_{x_i}\b_{ik}(u^\ve)\right)^2\varphi\,dx\,dt\\
\ge \int_{U_T}\eta''(u)\sum_{k=d'+1}^d\left(\sum_{i=d'+1}^d\po_{x_i}\b_{ik}(u)\right)^2\varphi\,dx\,dt
\end{multline*}
which follows from the strong convergence of $u^\ve$ to $u$ in $L^2(U_T)$, the boundedness of $\sum_{i=d'+1}^d\po_{x_i}\b_{ik}(u^\ve)$ in $L^2(U_T)$, for all $k=d'+1,\cdots,d$, and the weak lower semicontinuity of the $L^2$-norm.  
The fact that $u(t,x)$ satisfies \eqref{e1.8E} follows by passing to the limit as $\ve\to0$ in \eqref{e4.5E} and then passing to the limit when $\d\to0$. In passing to the limit as $\ve\to0$, we proceed exactly as it has just been done for the proof of  \eqref{e1.8E-1}.   

 We also see that $u(t,x)$ satisfies \eqref{e1.7} and \eqref{e1.8D'}, as a direct consequence of the uniform boundedness of  $\nabla_{x''} b(u^\ve)$ in $L^2(U_T)$ and \eqref{e4.H0} proved in Step~\#2.  

\medskip
Step \#5. The limit function $u(t,x)$ satisfies \eqref{e1.9}, in Definition~\ref{D:1.1}. 

Indeed, we first write \eqref{e4.1} as
$$
(u^\ve-a_0)_t+\nabla\cdot (\bff(u^\ve)-\bff(a_0))= \nabla_{x''}^2: (B(u^\ve)-B(a_0))+\ve\Delta (u^\ve-a_0)+ G_\ve,
$$
where $G_\ve=\nabla_{x''}^2: B(a_0)+\ve\Delta a_0-\nabla\cdot \bff(a_0)$.   We multiply the above equation by $\sgn(u-a_0)$. It is precisely here that we need to use condition \eqref{e1.C}, for in this case we have
$$
\sgn(u^\ve-a_0)\nabla_{x''}^2: (B(u^\ve)-B(a_0))\le \nabla_{x''}^2:\sgn(u^\ve-a_0)(B(u^\ve)- B(a_0)),
$$
while  the fact that $\sgn(u-a_0)\Delta(u-a_0)\le \Delta|u-a_o|$ is well known, both in the sense of the distributions in $(0,\infty)\X\R^d$. Therefore, we get
$$
|u^\ve-a_0|_t+\nabla\cdot \sgn(u^\ve-a_0)(\bff(u^\ve)-\bff(a_0))\le \nabla_{x''}^2: \Bbf(u^\ve,a_0)+\ve\Delta |u-a_0|+ \theta_\ve,
$$
in the sense of distributions, where $\theta_\ve=\sgn(u^\ve-a_0)G_\ve$. We then apply this relation to  $\varphi\in C_0^\infty (V_T)$, $V_T=(0,T)\X\Om'\X(\Om''\cap B'')$,  to get 
\begin{multline}\label{e4.8D1}
\int_{U_T}\{|u^\ve(t,x)-a_0(x)|\po_t\varphi+ F(u^\ve(t,x),a_0(x))\cdot\nabla\varphi \\-\Bbf(u^\ve,a_0):\nabla_{x''}^2\varphi
 -\ve |u^\ve(t,x)-a_0(x)|\Delta \varphi\}\,dx\, dt\ge -\int_{U_T} \theta^\ve \varphi\,dx\,dt,
\end{multline} 
Now, by approximation, we may take $\varphi=\z_{\d}''(x'')\tilde\varphi$, where $\z_\d''(x'')$ is a $\Om''$-canonical local boundary layer sequence and $\tilde \varphi\in C_0^\infty((0,T)\X\Om'\X B'')$. We then get 
\begin{multline}\label{e4.8D1'}
\int_{U_T}\{|u^\ve-a_0|\z_\d''\po_t\tilde\varphi+ F(u^\ve,a_0)\cdot \z''_\d\nabla\tilde\varphi +\Bbf(u^\ve,a_0):\z''_\d\nabla_{x''}^2\tilde\varphi\\
 +\ve |u^\ve-a_0|\z_\d''\Delta \tilde\varphi+ F(u^\ve,a_0)\cdot\nabla_{x''}\z_\d'' \tilde\varphi\\
+2\Bbf(u^\ve,a_0)\nabla_{x''}\z''_\d \nabla_{x''}\tilde\varphi  +2\ve |u^\ve-a_0|\nabla_{x''}\z_\d''\cdot\nabla_{x''} \tilde\varphi\}\,dx\,dt \\
+\int_{U_T}\{\tilde\varphi  \Bbf(u^\ve,a_0): \nabla_{x''}^2\z''_\d  +\ve |u^\ve-a_0|\tilde \varphi \Delta_{x''}\z_\d'' \}\,dx\,dt \\ 
\ge -\int_{U_T} \theta^\ve\z''_\d\tilde \varphi\,dx\,dt.
\end{multline} 
Now, observe that the second term in the integral in the fourth line in the expression above is non-positive.  As for the first term,  by \eqref{e1.R1'},  it is the sum of two terms, the first of which is 
$\tilde\varphi \l\sum_{i,j}\sgn(u-a_0)(b_{ij}(u)-b_{ij}(a_0)) \nu_i \nu_j \,d\H^{d-1}\,dt$, which is non-negative since $B'(u):\nu\otimes\nu\ge0$  by \eqref{e1.B'}. The other term, still according to \eqref{e1.R1'}, converges to zero as $\d\to0$, since $\Bbf(u^\ve,a_0)$ vanishes at the boundary region $(0,T)\X\Om'\X\po\Om''$.  

We then make $\d\to0$,  use the fact that $F(u^\ve,a_0)$, $\Bbf(u^\ve,a_0)$ and $|u^\ve-a_0|$ vanish on $(0,T)\X\Om'\X\po\Om''$, to get
\begin{multline*}
\int_{U_T}\{|u^\ve(t,x)-a_0(x)|\po_t\tilde\varphi+ F(u^\ve(t,x),a_0(x))\cdot \nabla\tilde\varphi \\ +\Bbf(u^\ve,a_0): \nabla_{x''}^2\tilde\varphi
 +\ve |u^\ve(t,x)-a_0(x)|\Delta \tilde\varphi\}\,dx\,dt  
\ge -\int_{U_T} \theta^\ve \varphi\,dx\,dt.
\end{multline*} 
We then integrate by parts the third term in the inequality above and make $\ve\to0$, noticing that  $\theta^{\ve}$ is uniformly bounded in $L^\infty(V_T)$, to get \eqref{e1.9}.

\medskip
Step \#6. The limit function  $u(t,x)$ satisfies  \eqref{e1.9'}. 

Let us define $\Bbf^*(u,k,a_0)$ as in Lemma~\ref{L:1.2} and $\F(u,k,a_0)=F(u,k)+F(u,a_0)-F(a_0,k)$. Arguing as above, from \eqref{e4.1} we obtain
\begin{equation*}
A(u^\ve,k,a_0)_t+ \nabla\cdot \F(u^\ve,k,a_0)\le \nabla_{x''}^2: \Bbf^*(u^\ve,k,a_0)+\ve\Delta A(u^\ve,k,a_0)+ \mu_0,
\end{equation*}
 in the sense of the distributions, where $\mu_0=|\div K_{x''}(a_0,k)|$. We proceed as in Step~\#5, first applying this inequality to $\varphi\in C_0^\infty (V_T)$, obtaining
 \begin{multline*}
 \int_{U_T} \{A(u^\ve,k,a_0)\varphi_t+\F(u^\ve,k,a_0)\cdot\nabla\varphi+\Bbf^*(u^\ve,k,a_0):\nabla_{x''}^2\varphi +\ve A(u^\ve,k,a_0)\Delta\varphi\}\,dx\,dt\\
 \ge -\int_{U_T}\varphi \,d\mu_0\,dt.
 \end{multline*}
 Then, by approximation, as in Step~\#5, we take $\varphi=\z_{\d}''(x'')\tilde\varphi$, where $\z_\d''(x'')$ is a $\Om''$-canonical local boundary layer sequence and $\tilde \varphi\in C_0^\infty((0,T)\X\Om'\X B'')$, to obtain 
\begin{multline}\label{e4.8D1''}
\int_{U_T}\{A(u^\ve,k,a_0)\z_\d''\po_t\tilde\varphi+ \F(u^\ve,k,a_0)\cdot \nabla\tilde\varphi \z''_\d+\Bbf^*(u^\ve,k,a_0):\nabla_{x''}^2\tilde\varphi \z''_\d\\
 +\ve A(u^\ve,k,a_0)\z_\d''\Delta \tilde\varphi+ \F(u^\ve,k,a_0)\cdot\nabla_{x''}\z_\d'' \tilde\varphi\\
+2(\Bbf^*(u^\ve,k,a_0)\nabla_{x''}\z''_\d )\cdot \nabla_{x''}\tilde\varphi  +2\ve A(u^\ve,k,a_0)\nabla_{x''}\z_\d''\cdot\nabla_{x''} \tilde\varphi\}\,dx\,dt \\
+\int_{U_T}\{ \tilde\varphi \Bbf^*(u^\ve,k,a_0): \nabla_{x''}^2\z''_\d  +\ve A(u^\ve,ka_0)\tilde \varphi \Delta_{x''}\z_\d'' \}\,dx'\,dt \\ 
\ge -\int_{U_T} \theta^\ve \z_\d''\tilde \varphi\,dx\,dt-\int_{U_T}\z''_\d\tilde\varphi \,d\mu_0\,dt.
\end{multline} 
Again we can discard the terms containing $\nabla_{x''}^2\z''_\d$ and $\Delta_{x''}\z''_\d$ in the fourth line of \eqref{e4.8D1''}, using \eqref{e1.R1'},  since 
$A(u^\ve,k,a_0)\ge0$, both $A(u^\ve,k, a_o)$ and $\Bbf^*(u^\ve,k,a_0)$ vanish at the boundary region $(0,T)\X\Om'\X\po\Om''$, and, by Lemma~\ref{L:1.2},
$$
\Bbf^*(u^\ve,k,a_0):\nu\otimes\nu\ge0. 
$$
Then, we make $\d\to0$, recalling that $\F(u^\ve,k,a_0)$, $A(u_0^\ve,k,a_0)$ and $\Bbf^*(u^\ve,k, a_0)$ vanish on  $(0,T)\X\Om'\X\po\Om''$, to get
\begin{multline*}
\int_{U_T}\{A(u^\ve(t,x),k,a_0(x))\po_t\tilde\varphi+ \F(u^\ve(t,x),k,a_0(x))\cdot \nabla\tilde\varphi +\Bbf^*(u^\ve,k,a_0):\nabla_{x''}^2\tilde\varphi
\\ +\ve A(u^\ve(t,x),k,a_0(x))\Delta \tilde\varphi  
 \ge -\int_{U_T} \theta^\ve \tilde \varphi\,dx\,dt- \int_{U_T}\tilde\varphi \,d\mu_0\,dt.
\end{multline*} 
Finally, we use integration by parts in the third term and send $\ve\to0$, recalling that $\theta^\ve$ is uniformly bounded in $L^\infty(V_T)$, to obtain \eqref{e1.9'}, as desired. 

\medskip
Step \#7. Finally, we claim that the limit function $u(t,x)$ satisfies the initial condition  \eqref{e1.10}.

Indeed, first we observe that, in the same way that we obtained that $u(t,x)$ satisfies \eqref{e1.8E}, we also obtain that $u(t,x)$ satisfies, for all  $0\le \varphi\in C_0^\infty((-\infty,T)\X\Om)$ and $k\in\R$,
\begin{equation}\label{e4.12}
\int_{U_T}\{|u-k|\po_t\varphi-K_{x''}(u,k)\cdot\nabla \varphi\}\,dx\,dt +\int_{\Om}|u_0(x)-k|\varphi(0,x)\,dx \ge0 ,
\end{equation}
from which, in particular, we deduce that 
\begin{equation}\label{e4.12'}
\int_{U_T}\{u\po_t\varphi-(\nabla_{x''}\cdot B(u)-\bff(u))\cdot\nabla \varphi\}\,dx\,dt +\int_{\Om} u\varphi(0,x)\,dx =0 ,
\end{equation}
holds for all $\varphi\in C_0^\infty((-\infty,T)\X\Om)$ and $k\in\R$.
Choosing, in \eqref{e4.12} and \eqref{e4.12'},  $\varphi(t,x)=\phi(x)(1-\z_\d(t))$, with $\phi\in C_0^\infty(\Om)$,  where $\z_\d(t)=\z(\d^{-1}t)$ and $\z(t)$ is a smooth function satisfying $0\le \z(t)\le 1$, $\z(0)=0$, $\z(t)=1$, for $t>1$, from \eqref{e4.12} and \eqref{e4.12'}, making $\d\to0$, using the normal trace formula in \eqref{e11},  since $G^k(t,x):=(|u(t,x)-k|,-K_{x''}(u(t,x),k))\in\DM^2(U_T)$, for all $k\in\R$,  and, in particular, $G(t,x):=(u(t,x), -\nabla_{x''}\cdot B(u)+\bff(u))\in \DM^2(U_T)$,  we deduce the following, concerning the normal traces restricted to the $\{t=0\}\X\Om$,   
\begin{gather*}
G^k\cdot\nu|\{t=0\}\X\Om\le |u_0(x)-k|,\\
 G\cdot\nu |\{t=0\}\X\Om=u_0(x),
\end{gather*}
which extends from a relation for functionals in $\Lip_0(\{t=0\}\X\Om)^*$ to measures and then as functions in $L^\infty(\Om)$. Now, by the trace formula \eqref{e2.3''}, we deduce that 
\begin{gather*}
 G^k\cdot\nu|\{t=0\}\X\Om=\esslim_{t\to0+} |u(x,t)-k|,\\
 G\cdot\nu |\{t=0\}\X\Om=\esslim_{t\to0+} u(t,x),
\end{gather*}
first in the sense of $\Lip_0(\{t=0\}\X\Om)^*$, which then extends to the weak star topology of $L^\infty((\{t=0\}\X\Om)$. In particular, we have
\begin{gather*}
\esslim_{t\to0+} |u(x,t)-k|\le |u_0(x)-k|,\\
\esslim_{t\to0+} u(t,x)=u_0(x),
\end{gather*}
in the sense of the weak star topology of $L^\infty(\{t=0\}\X\Om)$, which, by convexity, gives
$$
\esslim_{t\to0+} |u(t,x)-k|= |u_0(x)-k|.
$$
These facts together, imply that $u(t,x)$ converges a.e.\  as $t\to0+$ to $u_0$, which implies \eqref{e1.10},  by dominated convergence. This concludes the proof. 

\end{proof}

\section{Uniqueness of solution to \eqref{e1.1}--\eqref{e1.4}}\label{S:5}

In this section we prove the uniqueness part of Theorem~\ref{T:1.1}. The proof follows closely the one in \cite{MPT}, in which concerns the part of the boundary submitted to the Dirichlet boundary condition. As we will see, the part of the boundary submitted to the Neumann condition is handled more easily because of the strong trace property. 
We will use the following lemma, which is an easy extension of a formula proved in \cite{Ca},  stated as a lemma in \cite{MPT} (see also \cite{BK}). For the rest of this section, we agree that $Q:=U_T$.  

\begin{lemma}[cf.\ \cite{Ca}, \cite{MPT}, \cite{BK}]\label{L:6.1} Let $\xi(x,t,y,s)$ be a nonnegative $C^\infty$ function such that:
\begin{align*}
&(t, x)\mapsto \xi(t,x,s,y)\in C_c^\infty(Q) \quad \text{for every $(s,y)\in Q$},\\
&(s,y)\mapsto\xi(t,x,s,y)\in C_c^\infty(Q)\quad \text{for every $(t,x)\in Q$}.
\end{align*}
Let $u(t,x)$ and $v(s,y)$ satisfy  \textrm{(ii)} of Definition~\ref{D:1.1}. Then, we have
\begin{gather}
-\iint_{Q\X Q} |u-v|(\xi_t+\xi_s)\notag\\ 
+\iint_{Q\X Q} \nabla_{x''}\cdot \Bbf(u,v)(\nabla_{x''}\xi+\nabla_{y''}\xi)-\iint_{Q\X Q}F(u,v)(\nabla_x\xi+\nabla_y\xi)\label{e6.1}\\
+\iint_{Q\X Q} \nabla_{y''}\cdot \Bbf(u,v)(\nabla_{x''}\xi+\nabla_{y''}\xi)\le 0.\notag
\end{gather}
\end{lemma}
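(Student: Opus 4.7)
The plan is to carry out Kruzhkov's doubling of variables in the spirit of \cite{Ca} and \cite{MPT}. First, for each frozen $(s,y) \in Q$, apply the entropy inequality \eqref{e1.8E} from condition (ii) to $u(t,x)$ with test function $\varphi(t,x) := \xi(t,x,s,y)$ and constant $k := v(s,y)$. A routine density argument (first for $k$ in a countable dense subset of $\R$, then extending to $v(s,y)$ by continuity in $k$ of $|u-k|$ and $K_{x''}(u,\cdot)$) allows integration of the resulting inequality over $(s,y)\in Q$. Perform the symmetric procedure with the roles of $u,v$ interchanged, exploiting $F(v,u) = F(u,v)$, $\Bbf(v,u) = \Bbf(u,v)$, and that $\sgn_\d'$ is even. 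Upon adding, and using that $\nabla_{x''}\cdot\Bbf(u,v)$ has only components with index $j \ge d'+1$ nonzero (so $\nabla_{x''}\cdot\Bbf(u,v)\cdot\nabla_x\xi = \nabla_{x''}\cdot\Bbf(u,v)\cdot\nabla_{x''}\xi$, and similarly in $y$), one arrives at
\begin{align*}
&\iint|u-v|(\xi_t + \xi_s) - \iint \nabla_{x''}\cdot\Bbf(u,v)\cdot\nabla_{x''}\xi - \iint \nabla_{y''}\cdot\Bbf(u,v)\cdot\nabla_{y''}\xi \\
&\quad + \iint F(u,v)\cdot(\nabla_x\xi + \nabla_y\xi) \ge R_u + R_v,
\end{align*}
where $R_u, R_v \ge 0$ denote the two parabolic dissipation terms from the $\limsup$ on the right of \eqref{e1.8E}.

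To reach \eqref{e6.1}, what is missing are the two ``cross'' diffusion terms $\iint \nabla_{x''}\cdot\Bbf(u,v)\cdot\nabla_{y''}\xi$ and $\iint \nabla_{y''}\cdot\Bbf(u,v)\cdot\nabla_{x''}\xi$, which should be absorbed by $R_u + R_v$. This is Carrillo's completion-of-squares trick and is the heart of the argument. Using the smooth approximation $\Bbf^\d(u,v)_{ij} := \int_v^u \sgn_\d(s-v)b_{ij}'(s)\,ds$, the factorization \eqref{e1.bij}, integration by parts in $y_j$, and passage to the limit $\d\to 0$ (exploiting that $\sgn_\d'$ concentrates on $\{|u-v|<\d\}$, where $\b_{jk}'(u)$ and $\b_{jk}'(v)$ are uniformly close), one identifies
\begin{align*}
\iint \nabla_{x''}\cdot\Bbf(u,v)\cdot\nabla_{y''}\xi = \lim_{\d\to 0}\iint \sgn_\d'(u-v)\sum_k W_k^x W_k^y\,\xi,
\end{align*}
where $W_k^x := \sum_{i=d'+1}^d \po_{x_i}\b_{ik}(u)$ and $W_k^y := \sum_{j=d'+1}^d \po_{y_j}\b_{jk}(v)$; by the symmetry $\Bbf_{ij} = \Bbf_{ji}$ and commutativity of mixed partials, the second cross term equals this one. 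Multiplying the pointwise inequality $\sum_k [(W_k^x)^2 + (W_k^y)^2] \ge 2\sum_k W_k^x W_k^y$ by $\sgn_\d'(u-v) \ge 0$ and integrating, combined with the identification above, shows that $R_u + R_v$ dominates $2\iint \nabla_{x''}\cdot\Bbf(u,v)\cdot\nabla_{y''}\xi$.

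Combining these facts and regrouping via $\nabla_{x''}\cdot\Bbf(u,v)\cdot\nabla_{x''}\xi + \nabla_{x''}\cdot\Bbf(u,v)\cdot\nabla_{y''}\xi = \nabla_{x''}\cdot\Bbf(u,v)(\nabla_{x''}\xi + \nabla_{y''}\xi)$, and analogously in $y$, yields \eqref{e6.1}. The main obstacle is the displayed identification in the second paragraph: the passage to the limit $\d \to 0$ requires careful use of the factorization \eqref{e1.bij}, the uniform $L^2$-control of $\nabla_{x''}\b_{ik}(u)$ and its $y$-analog via \eqref{e1.b1}--\eqref{e1.b2}, and the Lipschitz representation of the $b_{ij}$'s as composites with $b$ (noted right after \eqref{e1.B'}) to justify the substitution $\b_{jk}'(u) \to \b_{jk}'(v)$ on the thin support of $\sgn_\d'$.
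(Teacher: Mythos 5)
The paper itself does not prove this lemma: it is quoted from Carrillo \cite{Ca}, Mascia--Porretta--Terracina \cite{MPT}, and Bendahmane--Karlsen \cite{BK}, so there is no in-paper proof to compare against. Your strategy---Kruzhkov doubling in $(t,x)$ and $(s,y)$, adding the two entropy inequalities, and invoking Carrillo's completion of squares to absorb the two cross diffusion terms $\iint\nabla_{x''}\cdot\Bbf(u,v)\cdot\nabla_{y''}\xi$ and $\iint\nabla_{y''}\cdot\Bbf(u,v)\cdot\nabla_{x''}\xi$ into $R_u+R_v$---is exactly the route of those references, and you correctly identify that absorption as the crux.

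However, the mechanism you sketch for the crucial identification is not adequate as stated. Integrating by parts in $x$ first and then in $y$, the $\d$-regularized cross term reads
$\iint\sgn_\d'(u-v)\sum_k W_k^x\bigl(\sum_j\b_{jk}'(u)\,\po_{y_j}v\bigr)\xi$,
so the quantity paired with $W_k^x$ is $\sum_j\b_{jk}'(u)\,\po_{y_j}v$, not your $W_k^y=\sum_j\po_{y_j}\b_{jk}(v)=\sum_j\b_{jk}'(v)\,\po_{y_j}v$. Trying to pass from the former to the latter by Lipschitz continuity of $\b_{jk}'$ on the thin set $\{|u-v|<\d\}$ does not work: the residual $\sum_j(\b_{jk}'(u)-\b_{jk}'(v))\po_{y_j}v$ still carries the uncontrolled distributional derivative $\po_{y_j}v$, while the hypotheses only provide $\nabla_{y''}b(v)\in L^2$ (hence $W_k^y\in L^2$), not $\nabla_{y''}v$; indeed the very expression above is not a well-defined integral for rough $v$. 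The route that actually works, and which is Carrillo's, starts from the decomposition $\Bbf_{ij}(u,v)=b_{ij}(u)+b_{ij}(v)-2\,b_{ij}(u\wedge v)$: the first two terms integrate to zero against $\po_{x_i}\po_{y_j}\xi$ by compact support, and the third produces, after a $\sgn_\d$-regularization, a term concentrating on $\{u=v\}$, where $\b_{jk}'(u)=\b_{jk}'(v)$ holds automatically and only the controlled quantities $\nabla_{x''}b(u)$, $\nabla_{y''}b(v)$ ever appear. With that replacement your displayed identity does hold (the factor of two is accounted for by $\int\sgn_\d'\to 2$ matching the coefficient of $b_{ij}(u\wedge v)$), and the rest of your argument---including the Cauchy--Schwarz absorption $2W_k^xW_k^y\le(W_k^x)^2+(W_k^y)^2$---goes through.
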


\begin{theorem}\label{T:6.1} There exists at most one entropy solution to \eqref{e1.1}--\eqref{e1.4}, according to Definition~\ref{D:1.1}.
\end{theorem}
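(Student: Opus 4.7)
The plan is to implement Kruzhkov's doubling-of-variables, adapted to the parabolic setting by Carrillo and to mixed boundary conditions by \cite{MPT}, using Theorem~\ref{T:5.1} to handle the Neumann part. Given two entropy solutions $u(t,x)$ and $v(s,y)$, I would apply Lemma~\ref{L:6.1} with a symmetric test function $\xi_{\ve,h}(t,x,s,y)=\rho_h(t-s)\rho_\ve(x-y)\,\varphi(\tfrac{t+s}{2},\tfrac{x+y}{2})$, where $\varphi \in C^\infty([0,T)\X\bar\Om)$ is non-negative; Carrillo's trick (using the dissipation term on the right-hand side of \eqref{e1.8E} together with the matrix inequality \eqref{e1.b1}) lets me pass to the limits $h,\ve\to 0$ and produces a Kato-type inequality of the form
\begin{equation*}
\int_Q\bigl\{|u-v|\,\po_t\varphi - K_{x''}(u,v)\cdot\nabla\varphi\bigr\}\,dx\,dt + \int_\Om|u_0-v_0|\varphi(0,\cdot)\,dx \ge R(\varphi),
\end{equation*}
where $R(\varphi)$ collects the boundary remainders on $\Gamma_T'\cup\Gamma_T''$ produced by the fact that $\varphi$ need not vanish on $\po\Om$.

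For the Neumann piece I take $\varphi$ without requiring it to vanish on $\Gamma_T'$ and invoke Theorem~\ref{T:5.1} for both $u$ and $v$ along the canonical deformation of $\po\Om'$, obtaining strong traces $u^\tau, v^\tau \in L^\infty(\Gamma_T')$. On $\Gamma'$ the outward normal is $\nu=(\nu',0)$, hence $\nabla_{x''}\cdot\Bbf(u,v)\cdot\nu = 0$ identically, and the $\Gamma_T'$-contribution to $R(\varphi)$ collapses to $\int_{\Gamma_T'} F(u^\tau,v^\tau)\cdot\nu\,\varphi$. The Neumann condition (iii) in Definition~\ref{D:1.1} combined with \eqref{e1.f} then forces $\pi_{d'}(\bff)(u^\tau)\cdot\nu'=\pi_{d'}(\bff)(v^\tau)\cdot\nu'=0$ in the trace sense, so this term vanishes. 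This is precisely where Theorem~\ref{T:5.1} is indispensable.

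For the Dirichlet piece I follow the MPT strategy localized in a ball $B''$ as in \eqref{e1.8B}, working with an $\Om''$-canonical local boundary layer sequence $\z_\d''$. The identity $|u-v|=A(u,v,a_0)-|u-a_0|+|v-a_0|$ splits the $\Gamma_T''$-part of $R(\varphi)$ into contributions controlled by \eqref{e1.9'} (doubled: $k=v(s,y)$ in the inequality for $u$ and symmetrically $k=u(t,x)$ in the inequality for $v$) and by \eqref{e1.9} applied separately to $u$ and $v$. The measures $\mu_0$ appearing in \eqref{e1.9'} cancel by the built-in symmetry of the $A$--$H_{x''}$ construction. Lemma~\ref{L:1.2} then guarantees that the normal--normal contraction $\Bbf^*(u,v,a_0):\nu\otimes\nu$ is non-negative, so the boundary layer terms involving $\nabla_{x''}^2\z_\d''$ (which by \eqref{e1.R1'} concentrate on $\Psi(\d,\po\Om'')$ with the correct sign) may be discarded in the limit $\d\to0$; the $H^1_0$-regularity \eqref{e1.8D'} kills the residual trace-level contribution of $b(u)-b(a_0)$. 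The net outcome is $R(\varphi)\ge 0$.

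Taking finally $\varphi(t,x)=\chi_\tau(t)\,\psi(x)$ with $\chi_\tau$ approximating $\chi_{[0,t_0]}$ and $\psi\equiv 1$ on $\Om$, and using the initial condition (v) together with the \DM-field normal-trace argument already deployed in Step~\#7 of the proof of Theorem~\ref{T:4.1}, I obtain $\int_\Om|u(t_0,x)-v(t_0,x)|\,dx \le \int_\Om|u_0-v_0|\,dx=0$ for a.e.\ $t_0\in(0,T)$, hence $u=v$. The main obstacle will be the Dirichlet step: keeping precise track of the cross-dissipation produced by doubling the parabolic term in Lemma~\ref{L:6.1}, verifying that the $A$--$H_{x''}$ symmetry cancels the $\mu_0$ measures, and showing that the non-negativity supplied by Lemma~\ref{L:1.2} really absorbs the boundary-layer remainder uniformly in $\d$; once this is secured, the Neumann boundary is handled almost painlessly thanks to the strong trace Theorem~\ref{T:5.1}.
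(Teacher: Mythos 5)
Your plan is essentially the same as the paper's: doubling of variables via Lemma~\ref{L:6.1}, the $A$--$H_{x''}$ structure together with \eqref{e1.9}--\eqref{e1.9'} (through \eqref{e1.11}) to absorb the Dirichlet boundary layer, the strong-trace Theorem~\ref{T:5.1} combined with the weak Neumann formulation \eqref{e1.8N} to kill the $\Gamma_T'$ boundary term, and finally a time-characteristic test function together with the initial condition. A few of the mechanisms you cite are, however, misattributed. The $\mu_0$-measures in \eqref{e1.9'} do not cancel by any symmetry of $A$--$H_{x''}$; they simply disappear in passing to \eqref{e1.11}, because $\mu_0 = |\div K_{x''}(a_0,k)|$ is a Radon measure on the open set $\Om'\times B''$ and $(1-\z_\d'')\to 0$ pointwise there (Lemma~\ref{L:1.1}). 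Likewise, no $\nabla_{x''}^2\z_\d''$ terms and hence no invocation of Lemma~\ref{L:1.2} occur in the uniqueness step; Lemma~\ref{L:1.2} enters only when \emph{establishing} \eqref{e1.9'} in the existence proof, whereas in the uniqueness proof the doubled inequality of Lemma~\ref{L:6.1} is first order in $\z_\d''$, and the real work is the mollifier bookkeeping (the $I_4$--$I_5$ analysis: the $\nabla_{x''}\rho$ term is cancelled against the $\nabla_{y''}\cdot\Bbf(a_0(y),u)$ term, leaving only $\partial_{x_d}\rho$, which is then absorbed via the boundary-layer $\omega_n$ and \eqref{e1.8D'}, giving $M_1+M_2+M_3=0$). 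Finally, it is not \eqref{e1.f} that forces $\pi_{d'}(\bff)(u^\tau)\cdot\nu'=0$; it is the weak Neumann formulation \eqref{e1.8N} (zero $\DM$-normal trace on $\Gamma_T'$) combined with Theorem~\ref{T:5.1}, which identifies the weak normal trace with the pointwise one. You correctly flag the Dirichlet step as the technical bottleneck, so these are corrections of attribution rather than a structural gap, but as written the Dirichlet mechanism would not implement.
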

  
 \begin{proof} We consider two entropy solutions $u$ and $v$, where $u=u(t,x)$, $v=v(s,y)$.  Following closely the lines of the proof of theorem~1.1 of \cite{MPT},  we will first prove that 
 \begin{equation}\label{e6.2}
 -\int_{Q}|u-v|\tilde\varphi_t-K_{x''}(u,v)\cdot \nabla \tilde \varphi \,dx\,dt\le 0,
 \end{equation}
 for all $0\le\tilde\varphi\in C_0^\infty((0,T)\X\Om'\X \R^{d''})$. Once we get inequality \eqref{e6.2}, the conclusion is quite simple, using Theorem~\ref{T:1.2}. We take $\tilde\varphi(t,x):=\theta(t)\z_\l'(x')$, in $Q$, where $\z_\l'(x')$ is a $\Om'$-boundary layer sequence, and $0\le \theta (t)\in C_0^\infty(0,T)$. Making then $\l\to0$ and using the strong trace property at $\po\Om'\X\Om''$, which holds for both $u$ and $v$, by Theorem~\ref{T:1.2}, we arrive at
 \begin{equation*}
 -\int_{Q}|u-v|\theta'(t)\,dx\,dt\le 0,
 \end{equation*}
 and then, as usual, taking $\theta$ as an approximation of the characteristic function of a time interval $(\tau,t)$, we obtain
 \begin{equation*}
 \int_{\Om}|u(t,x)-v(t,x)|\,dx\le \int_{\Om}|u(\tau,x)-v(\tau,x)|\,dx
 \end{equation*}
 which, by making $\tau\to0$, gives the desired result. 
 
 So, it remains to prove \eqref{e6.2}.  As we show next, the proof of \eqref{e6.2} follows closely the lines of the  proof of theorem~1.1 in \cite{MPT}, only  simple adaptations being 
 required. 
 
 Following \cite{MPT}, we use the notation $Q_{(t,x)}$ or $Q_{(s,y)}$ to emphasize the domain of integration, whether with respect to $(t,x)$ or with respect to $(s,y)$, respectively, and we use simply $Q\X Q$ as the domain of integration of the four variables.

 To begin with,  we take  smooth functions $\psi_1''(x'')$, $\psi_2''(x'')$, $0\le\psi_i''\le1$, $i=1,2$,   with support in a ball $B''$,  as in \eqref{e1.8B},  around a point of the boundary $\po\Om''$, and here we define $\psi(x,y)=\psi_1(x)\psi_2(y)$ with $\psi_1(x)=\phi_1'(x')\psi_1''(x'')$ and $\psi_2(y)=\phi_2'(y')\psi_2''(y'')$, for $\phi_1',\phi_2'\in C_0^\infty(\Om')$,  where  
 $\psi_2''(x'')=1$  for $x''\in\supp\psi_1''$,  and we use the notation $x'=(x_1,\cdots,x_{d'})$, $x''=(x_{d'+1},\cdots,x_d)$,  $d=d'+d''$, so that $x=(x',x'')$.  
 
 As in \cite{MPT} we consider our coordinates $x,y$ already relabeled so that $\po\Om_{x''}''\cap B_{x''}''=\{x_d=\gamma(\bar x'')\}$,
 $\po\Om_{y''}''\cap B_{y''}''=\{y_d=\gamma(\bar y'')\}$, and  we take $\xi(t,x,s,y)$ in \eqref{e6.1} in the form
 $$
 \xi(t,x,s,y)=\z_{d}''(x'')\z_\eta''(y'')\rho(t-s,x-y)\psi(x,y)\theta(t),
 $$
 with $\theta\in C_c^\infty(0,T)$, $\theta\ge0$, $\rho=\rho_{l,m,n}=\rho_l(t-s)\rho_m(\bar x-\bar y)\rho_n(x_d-y_d)$, where $\rho_l,\rho_n$ are sequences of symmetric mollifiers in $\R$ and $\rho_m$ is a sequence of symmetric mollifiers in $\R^{d-1}$ and we denote $\bar x=(x_1,\cdots,x_{d-1})$ so $x=(\bar x,x_d)$, $y=(\bar y,y_d)$.  
 
 As for $\z_\d''(x''),\z_\eta''(y)$, we assume that they are the corresponding $\Om''$-canonical local boundary layer sequences. The use of the canonical local boundary layer sequences avoids all rather technical analysis in the Appendix of \cite{MPT} to study the properties of their special smooth boundary layer sequence, whose aim is to prove that, after all, it behaves as the canonical local boundary layer sequence.  
 
 Denoting $\nabla_{x+y}=\nabla_x+\nabla_y$, from \eqref{e6.1} we have
 \begin{align*}
 &-\iint\limits_{Q\X Q}|u-v|\z_\d''\z_\eta''\rho\theta_t\psi+\iint\limits_{Q\X Q}\nabla_{x''}\cdot\Bbf(u,v)\z_\d''\z_\eta''\rho\theta\nabla_{x''+y''}\psi\\
 &+\iint\limits_{Q\X Q}\nabla_{y''}\cdot \Bbf(u,v)\z_\d''\z_\eta''\rho\theta\nabla_{x''+y''}\psi-\iint_{Q\X Q}F(u,v)\z_\d''\z_\eta''\rho\theta\nabla_{x+y}\psi\\
&+\iint\limits_{Q\X Q}H_{x''}(u,v,a_0(x))\nabla_{x''}\z_\d''\z_\eta''\rho\theta\psi+\iint\limits_{Q\X Q}H_{y''}(u,v,a_0(y))\z_\d''\nabla_{y''}\z_\eta''\rho\theta\psi\\
&\le \iint\limits_{Q\X Q}K_{x''}(u,a_0(x))\nabla_{x''}\z_\d''\z_\eta''\rho\theta\psi+\iint\limits_{Q\X Q}K_{y''}(v,a_0(y))\z_\d''\nabla_{y''}\z_\eta''\rho\theta\psi\\
& -\iint\limits_{Q\X Q}\{\nabla_{x''}\cdot \Bbf(a_0(x),v)\nabla_{x''}\z_\d''\z_\eta''+\nabla_{y''}\cdot \Bbf(a_0(y),u)\z_\d''\nabla_{y''}\z_\eta''\}\rho\theta\psi\\
& +\iint\limits_{Q\X Q}\{F(a_0(x),v)\nabla_{x''}\z_\d''\z_\eta''+F(a_0(y),u)\z_\d''\nabla_{y''}\z_\eta''\}\rho\theta\psi\\
& -\iint\limits_{Q\X Q}\{\nabla_{x''}\cdot \Bbf(u,v)\z_\d''\nabla_{y''}\z_\eta''+\nabla_{y''}\cdot \Bbf(u,v)\nabla_{x''}\z_\d''\z_\eta''\}\rho\theta\psi\\
\end{align*}
Making $\d,\eta\to0$ and using \eqref{e1.11} for both $u$ and $v$, we get
 \begin{align*}
  &-\iint\limits_{Q\X Q}|u-v|\rho\theta_t\psi+\iint\limits_{Q\X Q}\nabla_{x''}\cdot\Bbf(u,v)\rho\theta\nabla_{x''+y''}\psi\\
 &+\iint\limits_{Q\X Q}\nabla_{y''}\cdot \Bbf(u,v)\rho\theta\nabla_{x''+y''}\psi-\iint_{Q\X Q}F(u,v)\rho\theta\nabla_{x+y}\psi\\
&\qquad \le \liminf_{\eta\to0}\liminf_{\d\to0} \{I_1+I_2+I_3+I_4+I_5\},
\end{align*}
where
\begin{align*}
&I_1=\iint\limits_{Q\X Q}K_{x''}(u,a_0(x))\nabla_{x''}\z_\d''\z_\eta''\rho\theta\psi,\\
&I_2=\iint\limits_{Q\X Q}K_{y''}(v,a_0(y))\z_\d''\nabla_{y''}\z_\eta'' \rho\theta\psi,\\
 &I_3=\iint\limits_{Q\X Q}\{F(a_0(x),v)\nabla_{x''}\z_\d''\z_\eta'' +F(a_0(y),u)z_\d''\nabla_{y''}\z_\eta'' \} \rho\theta\psi,\\
 &I_4=-\iint\limits_{Q\X Q}\{\nabla_{x''}\cdot \Bbf(u,v)\z_\d''\nabla_{y''}\z_\eta''+\nabla_{y''}\cdot \Bbf(a_0(y),u)\z_\d''\nabla_{y''}\z_\eta''\}\rho\theta\psi.\\
  &I_5=-\iint\limits_{Q\X Q}\{\nabla_{y''}\cdot \Bbf(u,v)\nabla_{x''}\z_\d''\z_\eta''+\nabla_{x''}\cdot \Bbf(a_0(x),v) \nabla_{x''}\z_\d''\z_\eta''\}\rho\theta\psi.
 \end{align*}

The integrals $I_1$, $I_2$ and $I_3$ are the easiest to deal with. Indeed, from Lemma~\ref{L:1.1}, we have that $\K(u,a_0):=(-|u-a_0|\psi_1, K_{x''}(u,a_0)\psi_1)\in\DM^2(Q)$, so that,  by \eqref{e11}, 
$$
\lim_{\eta\to0}\lim_{\d\to0} I_1=-\la\, \K(u,a_0)\cdot\nu,\int_{Q_{(s,y)}}\theta \rho(t-s,x-y)\psi_2\,\ra,
$$
where the latter denotes the normal trace of $\K(u,a_0)$ applied to $\int_{Q_{(s,y)}}\theta \rho(t-s,x-y)\psi_2$.

Similarly, we have
$$
\lim_{\eta\to0}\lim_{\d\to0} I_2=-\la\, \K(v,a_0)\cdot\nu,\int_{Q_{(t,x)}}\theta \rho(t-s,x-y)\psi_1\,\ra.
$$
Now, for $I_3$, since in each term  the boundary layer sequence is in the integral of a smooth function,  we immediately get
\begin{equation}\label{e6.F1}
\begin{aligned}
\lim_{\eta\to0}\lim_{\d\to0} I_3=&-\iint\limits_{\Sigma_{(t,x)}\X Q_{(s,y)}}F(a_0(x),v(s,y))\cdot\nu(x)\rho\theta\psi \\
            &-\iint\limits_{Q_{(t,x)}\X\Sigma_{(s,y)}}F(a_0(y),u(t,x))\cdot\nu(y)\rho\theta\psi,
            \end{aligned}
\end{equation}
where $\Sigma_{(t,x)}=((0,T)\X(\Om'\X\po\Om''))_{(t,x)}$ and $\Sigma_{(s,y)}=((0,T)\X(\Om'\X\po\Om''))_{(s,y)}$. 

Now, the integrals $I_4$ and $I_5$ are the ones posing the whole difficulty.  To begin with, we expand $I_4$ as follows, making an integration by parts in the first term,
\begin{equation}\label{e6.I4}
\begin{aligned}
I_4=&\iint\limits_{Q\X Q} \Bbf(u,v) \nabla_{x''}\z''_\d\nabla_{y''}\z_\eta''\rho\theta\psi\\
 &+\iint\limits_{Q\X Q} \Bbf(u,v) \z''_\d\nabla_{y''}\z_\eta''\rho\theta\nabla_{x''}\psi\\
 &+\iint\limits_{Q\X Q} \Bbf(u,v) \z''_\d\nabla_{y''}\z_\eta''\nabla_{x''}\rho\theta \psi\\
 &-\iint\limits_{Q\X Q} \nabla_{y''}\cdot\Bbf(a_0(y),u) \z''_\d\nabla_{y''}\z_\eta''\rho\theta\psi.
 \end{aligned}
\end{equation}
 We observe that, from \eqref{e1.8D'}, the first two integrals converge as $\d,\eta\to0$ to
 \begin{align*}
& \iint\limits_{\Sigma_{(t,x)}\X\Sigma_{(s,y)}}\Bbf(a_0(x),a_0(y))\rho\theta\psi\nu_x\nu_y, \\
 &-\iint\limits_{Q_{(t,x)}\X \Sigma_{(s,y)}} \Bbf(u,a_0(y))\rho\theta\nabla_{x''}\psi\nu_y,
 \end{align*}
both of which clearly go to zero when we make $l\to\infty$, $m\to\infty$ and then $n\to\infty$, where we also use  \eqref{e1.8D'} in the second integral. 

The last two integrals  require a more delicate analysis. The presence of $\nabla_{x''}\rho$ in the first of the last two integrals is a red flag. The second of these two last integrals is nice since, as a function of $y$, $\nabla_{y''}\cdot \Bbf(a_0(y),v)$ is $BV$, so we can take its limit as $\d,\eta\to0$ to find
\begin{equation*}
\iint\limits_{Q_{(t,x)}\X \Sigma_{(s,y)}}\nabla_{y''}\cdot\Bbf(a_0(y),u)\nu  \rho\theta \psi,
\end{equation*}
and we recall that $a_0$ depends only on $\bar y$, where $\bar y=(y_1,\cdots,y_{d-1})$. If $U_{(s,y)}\subset\R^{d-1}$ is such that $\Sigma_{(s,y)}$ is the graph of $\tilde \gamma(t,\bar y)=\gamma(\bar y'')$, over $U_{(s,y)}$, we may rewrite the last integral as
 \begin{equation}\label{e6.I4'}
 -\iint\limits_{Q_{(t,x)}\X U_{(s,y)}}\tilde \nabla_{\bar y''}\cdot\Bbf(a_0(y),u) N \rho(\bar x-\bar y,x_d-\gamma(\bar y'')) \theta \psi,
\end{equation}
with $\tilde\nabla_{\bar y''}=\binom{\nabla_{\bar y''}}{0}$, $N=(-\nabla_{\bar y''}\gamma,1)$, where we use the fact that the unit normal to $\po\Om''$ is $\nu=\frac1{\sqrt{1+|\nabla\gamma|^2}} (-\nabla_{\bar y''}\gamma,1)$ and the Jacobian is $\sqrt{1+|\nabla\gamma|^2}$.
The striking observation in \cite{MPT} , recalling that $\nabla_{x''}\rho=-\nabla_{y''}\rho$, is that the last integral may be used to provide a cancelation of the integral involving $\nabla_{\bar y''}\rho$ coming out in the limit when $\d,\eta\to0$ of the third integral in \eqref{e6.I4},   leaving only the term involving $\frac{\partial \rho}{\partial y_d}$, which allows to make first $l,m\to\infty$, and then make $n\to\infty$. Namely, concerning the third integral in \eqref{e6.I4}, first we observe that 
$$
\z''_\eta(y'')=z_{\eta}(\gamma(\bar y'')-y_d),\quad \text{where}\quad z_\eta(r)=\begin{cases}0, &\text{for $r<0$},\\  r/\eta, &\text{for $0\le r<\eta$},\\ 1, &\text{for $r\ge \eta$}. \end{cases}
$$
 Therefore, $\nabla_{y''}\z_\eta(y'')=-z'_\eta(\gamma(\bar y'')-y_d)N$,  and $z_\eta'$ is the derivative of $z_\eta$ which clearly converges when $\eta\to0$ to $\d_{\{0\}}$, where the latter is the Dirac measure concentrated at the origin.  So, the limit of the third integral in \eqref{e6.I4} when $\d,\eta\to0$ is
 \begin{multline}\label{e6.I4''}
  -\iint\limits_{Q_{(t,x)}\X U_{(s,y)}}\Bbf(u, a_0(y))N \tilde\nabla_{\bar y''}\rho(\bar x-\bar y,x_d-\gamma(\bar y''))\theta\psi \\
  -\iint\limits_{Q_{(t,x)}\X U_{(s,y)}}\Bbf(u, a_0(y))\cdot N\otimes N \frac{\partial\rho}{\partial x_d}(\bar x-\bar y,x_d-\gamma(\bar y''))\theta\psi.
\end{multline}
Integrating by parts in the first integral in \eqref{e6.I4''}, results one integral which is the negative of the one in \eqref{e6.I4'}, and  two others that clearly vanish in the limit when $l,m,n\to\infty$, by \eqref{e1.8D'}. Therefore, the only relevant term coming from \eqref{e6.I4} is the second integral in \eqref{e6.I4''}. Now, after sending $l,m\to\infty$, this integral becomes
\begin{equation*}
  -\int\limits_{Q_{(t,x)}}\Bbf(a_0(x),u)\cdot N\otimes N \rho_n'(x_d-\gamma(\bar x''))\theta\psi,
  \end{equation*}
Adding it to the one resulting from taking the limit $l,m\to\infty$ in the second integral in \eqref{e6.F1}, we get
 \begin{equation}\label{e6.K1}
M_2= -\int\limits_{Q_{(t,x)}}\{\Bbf(a_0(x),u)\cdot N\otimes N \rho_n'+ F(a_0(x),u)N(\bar x'')\rho_n\}\theta\psi,
 \end{equation}
 where $\rho_n'$ and $\rho_n$ are evaluated at $x_d-\gamma(\bar x'')$. Now, as in  \cite{MPT}, define $\om_n:=2\int_{x_d-\gamma(\bar x'')}^0\rho_n(s)\,ds$, which is clearly a boundary layer sequence for $\{x''\in\R^{d''}\,:\, x_d<\gamma(\bar x'')\}\supset \Om''\cap B''$, and satisfies
 $$
 -\nabla_{x''}^2\om_n=2N\otimes N \rho_n'(x_d-\gamma(\bar x''))-2 \nabla_{x''}^2\gamma(\bar x'')\,\rho_n(x_d-\gamma(\bar x'')).
 $$
Substituting in \eqref{e6.K1}, observing that $\nabla\om_n=-2\rho_n N(\bar x'')$ we get
\begin{multline*}
\frac12\int\limits_{Q_{(t,x)}}\{\Bbf(u,a_0)\cdot \nabla_{x''}^2\om_n+F(a_0,u)\nabla\om_n\}\theta \psi\\
-\int_{Q_{(t,x)}} \Bbf(u,a_0)\cdot\nabla_{x''}^2 \gamma(\bar x'') \rho_n\theta\psi.
\end{multline*}
Now integrating by parts the first term in the first integral above, using \eqref{e1.8D'} again, we get
\begin{align*}
M_2=&-\frac12\int_{Q_{(t,x)}}K_{x''}(u,a_0)\nabla \om_n\theta \psi\\
&-\frac12\int_{Q_{(t,x)}}\Bbf(u,a_0)\nabla\om_n\theta \nabla\psi\\
&-\int_{Q_{(t,x)}}\Bbf(u,a_0)\cdot\nabla^2\gamma\rho_n\theta\psi.
\end{align*}
Hence, using \eqref{e1.8D'} for the second and the third integral,  passing to the limit  when $n\to\infty$ we get
$$
\lim_{n\to\infty}M_2=\frac12\la \K(u,a_0)\cdot\nu,\theta\psi_2\ra.
$$
Proceeding with $I_5$ exactly as it was done for $I_4$, using this time the limit as $l,m\to\infty$ of the second integral in \eqref{e6.F1}, we define $M_3$ as the analog of $M_2$
and get
$$
\lim_{n\to\infty}M_3=\frac12\la \K(v,a_0)\cdot\nu,\theta\psi_1\ra.
$$
Finally, defining $M_1$ as the limit as  $l,m\to\infty$ of $\lim_{\d,\eta\to 0} (I_1+I_2)$, we clearly obtain
$$
\lim_{n\to\infty} M_1= -\frac12\la \K(u,a_0)\cdot\nu,\theta\psi_2\ra-\frac12\la \K(v,a_0)\cdot\nu,\theta\psi_1\ra,
$$
that is, $M_1+M_2+M_3=0$, which then finishes the proof.

 \end{proof}

\section{Possible Conflicts of Interest Information}
H.~Frid gratefully acknowledges the support from CNPq, through grant proc. 303950/2009-9, and FAPERJ, through grant E-26/103.019/2011. Y.~Li gratefully acknowledges the support from NSF of China, through grants 11231006 and 11571232, NSF of Shanghai, through grant 14ZR1423100, and Shanghai Committee of Science and Technology, through grant 15XD1502300.

 \end{document}